\newtheorem{theorem}{Theorem}[section]
\newtheorem{lemma}[theorem]{Lemma}
\newtheorem{corollary}[theorem]{Corollary}
\newtheorem{definition}[theorem]{Definition}
\newtheorem{remark}[theorem]{Remark}
\newtheorem{example}[theorem]{Example}
\newcommand{\R}{{\mathbf R}}
\newcommand{\Z}{{\mathbf Z}}
\newcommand{\Q}{{\mathbf Q}}
\renewcommand{\int}{\rm Int}
\newcommand{\E}{\mathbb E}
\newcommand{\LL}{\mathcal L}
\newcommand{\PP}{\mathbb {P}}
\newcommand{\lk}{{\rm {Lk}}}
\newcommand{\cd}{{\rm cd}}
\newcommand{\gd}{{\rm gdim}}
\begin{document}

\title{Fundamental groups of clique complexes of random graphs}         
\author{Armindo Costa, Michael Farber and Danijela Horak}        
\date{November 15, 2014}          
\maketitle

\abstract{We study fundamental groups of clique complexes associated to random Erd\"os - R\'enyi graphs $\Gamma$. 
We establish thresholds for a number of properties of fundamental groups of these complexes $X_\Gamma$. In particular, if $p=n^\alpha$, we show that 
\begin{equation*}
\begin{array}{lll}
\gd(\pi_1(X_\Gamma))=\cd(\pi_1(X_\Gamma))=1, &\mbox{if}& \alpha <-1/2,\\
\gd(\pi_1(X_\Gamma))=\cd(\pi_1(X_\Gamma)) =2, & \mbox{if} & -1/2 <\alpha <-11/30,\\
\gd(\pi_1(X_\Gamma))=\cd(\pi_1(X_\Gamma)) =\infty,& \mbox{if}&-11/30 <\alpha <-1/3,
\end{array}
\end{equation*}
a.a.s., where $\gd$ and $\cd$ denote the geometric dimension and cohomological dimension correspondingly. 
It is known that the fundamental group $\pi_1(X_\Gamma)$ is trivial for $\alpha >-1/3$. 
We prove that for $-11/30 <\alpha<-1/3$ the fundamental group $\pi_1(X_\Gamma)$ has 2-torsion but has no $m$-torsion for any given prime $m\ge 3$. 
We also prove that aspherical subcomplexes of the random clique complex $X_\Gamma$ satisfy the Whitehead Conjecture, i.e. all their subcomplexes are also aspherical, a.a.s.\footnote{The symbol a.a.s. stands for \lq\lq asymptotically almost surely\rq\rq\,  which means that the 
probability that the corresponding statement holds tends to 1 as $n$ tends to infinity.}
}

\section{Introduction} 

A clique in a graph $\Gamma$ is a set of vertices of $\Gamma$ such that any two of them are connected by an edge. 
The family of cliques of $\Gamma$ forms a simplicial complex $X_\Gamma$ with the vertex set $V(X_\Gamma)$ equal the vertex set $V(\Gamma)$ of $\Gamma$. 
The complex $X_\Gamma$ is called {\it the clique complex} (or {\it the flag complex)} of $\Gamma$. Clearly, the 1-skeleton of $X_\Gamma$ is the graph $\Gamma$ itself. 

In this paper we consider the clique complexes $X_\Gamma$ of random Erd\H{o}s - R\'enyi graphs $\Gamma\in G(n,p)$. 
Recall that $G(n,p)$ is the probability space of all subgraphs $\Gamma$ of the complete graph on $n$ vertices satisfying  $V(\Gamma)=\{1, \dots, n\}$, where 
the probability of a graph $\Gamma$  equals
$${\mathbb P}(\Gamma) \, =\, p^{e(\Gamma)}(1-p)^{{\binom n 2}-e(\Gamma)}.$$ Here $p\in (0,1)$ is a probability parameter, which in general is a function of $n$, 
and $e(\Gamma)$ denotes the number of edges in $\Gamma$. 
The complex $X_\Gamma$, where $\Gamma\in G(n,p)$, is a random simplicial complex. One is interested in topological properties of $X_\Gamma$ which are satisfied with high probability when the number of vertices $n$ tends to infinity.

Topology of clique complexes of random graphs were studied by M. Kahle et al. in a series of papers \cite{Kahle1}, \cite{Kahle3}, \cite{KM}, \cite{KP}. A recent survey is given in \cite{Ksurvey}. The following result is stated in a simplified form. 

\vskip 0.3cm
\noindent
{\bf Theorem:} {\rm [See M. Kahle \cite{Kahle1}, Theorems 3.5 and 3.6] }
{\it Consider the clique complex $X_\Gamma$ of a random graph $\Gamma\in G(n,p)$ where $p=n^\alpha$. Let $k>0$ be a fixed integer. Then 
\begin{enumerate}
\item[{\rm (a)}] If $ \alpha < -1/k$ then $H_k(X_\Gamma;\Z)=0$ a.a.s.
\item [{\rm (b)}] If $-1/k < \alpha < -1/(k+1)$ then $H_k(X_\Gamma;\Q)\not=0$, a.a.s.
\end{enumerate}
}

One knows (see Theorem 3.4 from \cite{Kahle1})
that {\it for $p=n^\alpha$ the random clique complex $X_\Gamma$ is $k$-connected a.a.s. if} $$\alpha > -(2k+1)^{-1}.$$  In particular, 
the clique complex $X_\Gamma$ is connected for $\alpha >-1$ and it is simply connected for $\alpha>-1/3$, a.a.s.

In this paper we are interested in the properties of the fundamental group of a random clique complex and therefore (see above) 
we shall restrict our attention to the regime 
$\alpha <-1/3$ where $p=n^\alpha$. In a recent preprint \cite{Babson} E. Babson proved that for $\epsilon >0$ and $n^{\epsilon -1/2}<p<n^{n-\epsilon -1/3}$ the fundamental group $\pi_1(X_\Gamma)$ is nontrivial and is hyperbolic in the sense of Gromov \cite{Gromov87}.

In this paper we use the notation $f\ll g$ to indicate that $f/g\to 0$ as $n\to \infty$. 

The main results of this paper are as follows:
\vskip 0.3cm
\noindent
{\bf Theorem A:} {\rm [See Theorem \ref{thm1}] {\it 
 If 
\begin{eqnarray}\label{assumption1}p\ll n^{-1/2}\end{eqnarray} then, 
with probability tending to 1 as $n\to \infty$, the clique complex $X_\Gamma$ is simplicially collapsible to a graph. In particular 
 under the above assumption the 
fundamental group $\pi_1(X_\Gamma, x_0)$ of a random 
clique complex $X_\Gamma$, where $\Gamma\in G(n,p)$, 
is free, for any choice of the base point $x_0\in X_\Gamma$, a.a.s.
Moreover,each connected component of the 2-skeleton $X_{\Gamma}^{(2)}$ is homotopy equivalent to a wedge of circles and 2-spheres, a.a.s.
}
\vskip 0.3 cm
Note that in the range (\ref{assumption1}) the dimension of $X_\Gamma$ is $\le 3$ and the 2-skeleton $X_\Gamma^{(2)}$ contains the tetrahedron and its subdivision having 5 vertices, a.a.s. Hence, the 2-skeleton $X_\Gamma^{(2)}$ is not collapsible to a graph. 

Note also that for $p\gg n^{-1/2}$ the fundamental group $\pi_1(X_\Gamma)$ ceases to be free. This follows from a theorem of M. Kahle \cite{Kahle3} which states that 
for $$p^2 \ge (3/2 +\epsilon)\cdot n^{-1}\cdot \log n$$
the fundamental group $\pi_1(X_\Gamma)$ has property (T) and thus its cohomological dimension is $\ge 2$, a.a.s. In the following theorem we describe the range in which the cohomological dimension of 
$\pi_1(X_\Gamma)$ equals 2. 

We wish to mention a recent preprint \cite{ALS} where random triangular groups are studied; this class of random groups is different from the class of fundamental groups of random clique complexes although these two classes of random groups share several common features. The main result of \cite{ALS} states that there exists an interval in which the random triangular group is neither free nor possesses the property T. We expect that such intermediate regime exists in the model which we study in this paper. We shall address this issue elsewhere. 


\vskip 0.3cm
\noindent {\bf Theorem B:} {\rm [See Theorem \ref{cd2}]} { \it
Assume that 
\begin{eqnarray}\label{11/30}
p \ll n^{-11/30}. 
\end{eqnarray}
Then the fundamental group $\pi_1(X_\Gamma)$ of the clique complex of a random graph $\Gamma\in G(n,p)$ satisfies
\begin{eqnarray}
\gd(\pi_1(X_\Gamma)) = \cd(\pi_1(X_\Gamma))\le 2,
\end{eqnarray}
and in particular
$\pi_1(X_\Gamma)$ is torsion free, a.a.s.
Moreover, if for some $\epsilon >0$ one has
$$\left( (3/2+\epsilon)\cdot n^{-1} \cdot \log n\right)^{1/2} \, \le \, p\, \ll \, n^{-11/30}$$
then 
\begin{eqnarray}
\gd(\pi_1(X_\Gamma)) = \cd(\pi_1(X_\Gamma))= 2,
\end{eqnarray}
a.a.s.}

Recall that {\it geometric dimension} $\gd(G)$ of a discrete group $G$ is defined as the minimal dimension of an aspherical CW-complex having $G$ as its fundamental group. 
The {\it cohomological dimension} $\cd(G)$ is the shortest length of a free resolution of $\Z$ viewed as a $\Z[G]$-module. In general $\cd(G)\le \gd(G)$ and a classical theorem of Eilenberg and Ganea \cite{EG} states that $\cd(G)=\gd(G)$, except of three low-dimensional cases. At present it is known that the equality 
$\cd(G)=\gd(G)$ holds, except possibly for the case when $\cd(G)=2$ and $\gd(G)=3$. 
The Eilenberg--Ganea Conjecture states that $\cd(G)=2$ implies $\gd(G)=2$.

The following theorem states that 2-torsion appears in the fundamental group of a random clique complex when we cross the threshold 11/30:

\vskip 0.3 cm\noindent
{\bf Theorem C:} 
{\rm [See Theorem \ref{2torsion}]} {\it 
Assume that 
\begin{eqnarray}
n^{-11/30}\ll p \ll n^{-1/3-\epsilon}
\end{eqnarray}
where $0<\epsilon < 1/30$ is fixed. Then the fundamental group $\pi_1(X_\Gamma)$ has 2-torsion and thus its cohomological dimension and geometric dimension are infinite, a.a.s.
}
\vskip 0.3cm

Surprisingly, odd torsion does not appear in fundamental groups of random clique complexes until the triviality threshold $p=n^{-1/3}$:

\vskip 0.3 cm
\noindent
{\bf Theorem D:} {\rm [See Theorem \ref{oddtorsion}]} {\it 
Let $m\ge 3$ be a fixed prime. Assume that 
\begin{eqnarray}
 p \ll n^{-1/3-\epsilon}
\end{eqnarray}
where $\epsilon >0$ is fixed. Then a random graph $\Gamma\in G(n,p)$ with probability tending to 1 has the following property: the fundamental group of any subcomplex $Y\subset X_\Gamma$ has no $m$-torsion. 
}\vskip 0.3cm

Surprisingly, we see that for all the assumptions on the probability parameter $p$ considered in Theorems A, B, C, the fundamental groups of random clique complexes have cohomological dimension $1, 2$ or $\infty$, which implies that {\it probabilistically} the Eilenberg--Ganea conjecture is satisfied. 
Note also that in the complementary range, when $p=n^\alpha$ with $\alpha>-1/3$, the clique complex $X_\Gamma$ of a random graph $\Gamma$ is simply connected a.a.s. (by Theorem 3.4 from \cite{Kahle1}) and hence the Eilenberg--Ganea conjecture is also {\it probabilistically} satisfied. 
We may also mention here that any finitely presented group appears as the fundamental group of a clique complex $X_\Gamma$ for a graph 
$\Gamma\in G(n,p)$ with any $n$ large enough. 

Next we state a result in the direction of the Whitehead conjecture. 

Recall that a connected simplicial complex $Y$ is said to be aspherical if $\pi_i(Y)=0$ for all $i\ge 2$; this is equivalent to the requirement that the universal cover of $Y$ is contractible. 
For 2-dimensional complexes $Y$ the asphericity is equivalent to the vanishing of the second homotopy group $\pi_2(Y)=0$, or equivalently, that any continuous map 
$S^2\to Y$ is homotopic to a constant map. 
Random aspherical 2-complexes could be helpful for testing probabilistically the open problems of two-dimensional topology, such as the Whitehead conjecture. 
This conjecture stated by J.H.C. Whitehead in 1941 claims that a subcomplex of an aspherical 2-complex is also aspherical. 
Surveys of results related to the Whitehead conjecture can be found in \cite{B}, \cite{R}. 

\vskip 0.3 cm
\noindent
{\bf Theorem E:} {\rm [See Corollary \ref{wc}]} {\it 
Assume that $p\ll n^{-1/3-\epsilon}$, where $\epsilon >0$  is fixed. Then, for a random graph $\Gamma\in G(n,p)$, the clique complex $X_\Gamma$ has the following property with probability tending to 1 as $n\to \infty$: any aspherical subcomplex $Y\subset X_\Gamma^{(2)}$ satisfies the Whitehead Conjecture, i.e. any subcomplex $Y'\subset Y$ is also aspherical. 
}
\vskip 0.3cm

Thus we see that {\it probabilistically}, for large {\it finite} simplicial complexes the Whitehead conjecture holds for all their aspherical subcomplexes. 

We also remark that, as is well known, any finite simplicial complex is homeomorphic to a clique complex $X_\Gamma$ with $\Gamma\in G(n,p)$ for any $n$ large enough; thus every 2-dimensional finite simplicial complex appears up to homeomorphism with positive probability for large $n$.

Recall that a well known result of Bestvina and Brady \cite{BB} states that either the Whitehead conjecture or the Eilenberg--Ganea conjecture must be false. However, Bestvina and Brady consider these two conjectures in a larger class of {\it infinite} simplicial complexes and not necessarily finitely presented groups.

We should also point out the limitations of our approach. We have no results in the direction of the Eilenberg--Ganea conjecture near the two critical
values of the probability parameter $p=n^{-11/30}$ and $p=n^{-1/3}$. Besides, we do not know the validity of the probabilistic version of the Whitehead conjecture (the analogue of Theorem E) for $p\gg n^{-1/3}$.


A few words about terminology we use in this paper.  
By a {\it 2-complex} we understand a finite simplicial complex of dimension $\le 2$. 
The $i$-dimensional simplexes of a 2-complex are called {\it vertices} (for $i=0$), {\it edges} (for $i=1$) and {\it faces} (for $i=2$). 

A 2-complex is said to be {\it pure} if every vertex and every edge are incident to a face. 

The {\it pure part} of a 2-complex is the closure of the union of all faces. 

The {\it  degree} of an edge $e$ of $X$ is the number of faces containing $e$. 

The {\it boundary} $\partial X$ of a 2-complex $X$ is the union of all edges of degree one. 
We say that a 2-complex $X$ is {\it closed}
if $\partial X=\emptyset$.

We denote by $V(X), E(X), F(X)$ the sets of vertices, edges and faces of $X$, correspondingly. 
We also use the notations $v(X)=|V(X)|,$ $e(X)=|E(X)|,$ $f(X)=|F(X)|$. 

A connected 2-complex $X$ is {\it strongly connected} if $X-V(X)$ is connected. 

We use the notations $P^2$ for the real projective plane. 

\vskip 0.5cm

The authors thank the referee for making useful critical remarks.

\section{The containment problem}

In this section we collect some known results which we shall use in this paper. The only new result here is Theorem \ref{thmbalanced} 
which describes properties of clean triangulations of surfaces. 

Let $S$ be a 2-complex. 
We have already introduced the notations 
\begin{eqnarray}\label{nudef}\nu(S)=\frac{v(S)}{e(S)}, \quad \quad \tilde \nu(S) = \min_{S'\subset S} \nu(S')\end{eqnarray} 
where $v(S)$ and $e(S)$ denote the number of vertices and edges in $S$. 
Although the numbers $\nu(S)$ and $\tilde \nu(S)$ depend only on the 1-skeleton of $S$, it is convenient to think about $\nu(S)$ and $\tilde \nu(S)$ as being associated to the whole 2-complex $S$ due to the following formula
\begin{eqnarray}\label{useful}
\nu(S) = \frac{1}{3} + \frac{3\chi(S) +L(S)}{3e(S)}.
\end{eqnarray}
Here 
\begin{eqnarray*}L(S)&=&\sum_{e}\left[2-\deg(e)\right]\\
&=&2e(S) -3f(S).\end{eqnarray*}
In the definition of $L(S)$ the sum is over all the edges $e$ of $S$ and $\deg(e)$ is the number of faces containing $e$. 
Note that $L(S) \le 0$ assuming that $S$ is closed, i.e. if $\deg(e)\ge 2$ for every edge $e$ of $S$.

The embeddability of $S$ into $X_\Gamma$ is equivalent to the embeddability of the 1-skeleton of $S$ into $\Gamma$. 
The following result follows from the well known subgraph containment problem in random graph theory, see \cite{JLR}, Theorem 3.4 on page 56. 
\begin{theorem}\label{embed}
Let $S$ be a fixed finite simplicial complex. Consider the clique complex $X_\Gamma$ associated to a random Erd\H{o}s - R\'enyi graph $\Gamma\in G(n,p)$. Then:
\begin{enumerate}
\item[(A)] If $p\ll n^{-\tilde \nu(S)}$ then the probability that $S$ admits a simplicial embedding into $X_\Gamma$ tends to $0$ as $n\to \infty$;
\item [(B)] If $p\gg n^{-\tilde \nu(S)}$ the the probability that $S$ admits a simplicial embedding into $X_\Gamma$ tends to $1$ as $n\to \infty$;
\end{enumerate}
\end{theorem}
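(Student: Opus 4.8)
The statement is essentially a translation of the classical subgraph-containment threshold for $G(n,p)$ into the language of clique complexes, so the plan is to reduce the question about $X_\Gamma$ to a question about the 1-skeleton. First I would observe that, since $X_\Gamma$ is a clique complex, a set of vertices spans a simplex precisely when it spans a complete subgraph of $\Gamma$; consequently a simplicial map $S\to X_\Gamma$ that is injective on vertices is automatically a simplicial embedding as soon as the induced map on 1-skeleta sends each edge of $S$ to an edge of $\Gamma$. In other words, $S$ embeds simplicially into $X_\Gamma$ if and only if the 1-skeleton $S^{(1)}$, regarded as an abstract graph, embeds into $\Gamma$ as a subgraph. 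This is the point made in the sentence preceding the theorem, and it is the only place where the flag (clique) structure is used.

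Having made this reduction, I would invoke the subgraph containment theorem from random graph theory (\cite{JLR}, Theorem 3.4, p.~56), which states that for a fixed graph $H$ the threshold for $H\subseteq\Gamma$ with $\Gamma\in G(n,p)$ is $n^{-1/m(H)}$, where $m(H)=\max_{H'\subseteq H}\,\frac{e(H')}{v(H')}$ is the maximum edge-to-vertex density over subgraphs of $H$; below the threshold the probability of containment tends to $0$ and above it to $1$. Applying this with $H=S^{(1)}$, the threshold exponent becomes $1/m(S^{(1)})$. It then remains to identify $1/m(S^{(1)})$ with $\tilde\nu(S)$. By definition $\nu(S')=v(S')/e(S')$ for a subcomplex $S'$, and $\tilde\nu(S)=\min_{S'\subseteq S}\nu(S')$; since the minimum of $v/e$ over subcomplexes of $S$ coincides with the minimum of $v/e$ over subgraphs of $S^{(1)}$ (every subgraph of $S^{(1)}$ is the 1-skeleton of some subcomplex $S'\subseteq S$, and conversely), we get $\tilde\nu(S)=\min_{H'\subseteq S^{(1)}} v(H')/e(H') = 1/\max_{H'} e(H')/v(H') = 1/m(S^{(1)})$. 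Substituting, the threshold is exactly $n^{-\tilde\nu(S)}$, which gives (A) and (B).

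There is really no hard step here; the only point that requires a line of care is the equivalence ``simplicial embedding of $S$ into $X_\Gamma$'' $\Leftrightarrow$ ``subgraph embedding of $S^{(1)}$ into $\Gamma$'', and in particular checking that an embedding of the 1-skeleton does not create unwanted identifications of higher simplices — but this is immediate because a simplex of a clique complex is determined by its vertex set, so an injection on vertices that respects edges respects all simplices and is automatically injective on all of $S$. One should also note that $m(S^{(1)})$ and hence $\tilde\nu(S)$ depend only on $S^{(1)}$, consistent with the remark in the text that $\nu$ and $\tilde\nu$ are invariants of the 1-skeleton. I would therefore present the proof in two short paragraphs: the reduction to the 1-skeleton, and the citation of the containment theorem together with the identification of the two threshold exponents.
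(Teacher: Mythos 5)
Your proposal is correct and is precisely the argument the paper has in mind: the paper simply states that embeddability of $S$ into $X_\Gamma$ is equivalent to embeddability of $S^{(1)}$ into $\Gamma$ and then cites the subgraph-containment theorem from \cite{JLR}. You have spelled out the two small verifications — that the flag property makes the reduction to the $1$-skeleton an exact equivalence, and that $\tilde\nu(S)=1/m(S^{(1)})$ — which the paper leaves implicit, so your write-up is a slightly more detailed version of the same proof.
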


\begin{definition} A graph $\Gamma$ is said to be balanced if for any proper subgraph $\Gamma'\subset \Gamma$ one has $\nu(\Gamma)\le \nu(\Gamma')$. 
A graph $\Gamma$ is said to be strictly balanced if for any proper subgraph $\Gamma'\subset \Gamma$ one has $\nu(\Gamma) < \nu(\Gamma')$. 
\end{definition}
\begin{definition}
A simplicial 2-complex $S$ is said to be $\nu$-balanced (or strictly $\nu$- balanced) if its 1-skeleton is balanced (or strictly balanced, correspondingly).
\end{definition}

\begin{definition}
A simplicial 2-complex $S$ is called clean if it coincides with the 2-skeleton of the clique complex of its 1-skeleton.
\end{definition}

In other words, a triangulation is clean if any clique consisting of three vertices spans a 2-simplex. 

\begin{example} {\rm 
Let $K_{r+1}$ be the complete graph on $r+1$ vertices. It is easy to see that it is strictly $\nu$-balanced and 
$$\tilde \nu(K_{r+1}) = \nu(K_{r+1}) = \frac{2}{r}.$$}
\end{example}

As a corollary of Theorem \ref{embed} we obtain:
\begin{corollary} \label{dimension}
If the probability parameter $p$ satisfies 
$$n^{-2/r} \ll p \ll n^{-2/(r+1)}$$
where  $r\ge 2$ is an integer, then the dimension 
$\dim X_\Gamma $ of a random clique complex $X_\Gamma$ equals $r$, a.a.s.
\end{corollary}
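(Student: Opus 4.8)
The plan is to deduce the statement from Theorem \ref{embed} applied to complete graphs, together with two elementary observations: first, that $\dim X_\Gamma \ge r$ is equivalent to $\Gamma$ containing a copy of the complete graph $K_{r+1}$; and second, that $\dim X_\Gamma \le r$ is equivalent to $\Gamma$ containing no copy of $K_{r+2}$. Indeed, by definition an $r$-simplex of the clique complex $X_\Gamma$ corresponds precisely to an $(r+1)$-clique of $\Gamma$, so these two equivalences are immediate. Thus it suffices to control the probabilities of the events ``$K_{r+1} \hookrightarrow \Gamma$'' and ``$K_{r+2}\hookrightarrow \Gamma$'' in the given range of $p$.

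For the lower bound, I would invoke the Example above, which records that $K_{r+1}$ is strictly $\nu$-balanced with $\tilde\nu(K_{r+1}) = \nu(K_{r+1}) = 2/r$. Since we assume $p \gg n^{-2/r} = n^{-\tilde\nu(K_{r+1})}$, part (B) of Theorem \ref{embed} applied to $S = K_{r+1}$ (viewed as a $1$-dimensional simplicial complex, or as the $2$-skeleton of its own clique complex) gives that $\Gamma$ contains a copy of $K_{r+1}$ with probability tending to $1$; hence $\dim X_\Gamma \ge r$ a.a.s. For the upper bound, I would apply the same Example with $r$ replaced by $r+1$: $K_{r+2}$ is strictly $\nu$-balanced with $\tilde\nu(K_{r+2}) = 2/(r+1)$. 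Since we assume $p \ll n^{-2/(r+1)} = n^{-\tilde\nu(K_{r+2})}$, part (A) of Theorem \ref{embed} applied to $S = K_{r+2}$ gives that $\Gamma$ contains no copy of $K_{r+2}$ with probability tending to $1$; hence $\dim X_\Gamma \le r$ a.a.s. Combining the two a.a.s.\ statements (the intersection of two events each of probability $\to 1$ still has probability $\to 1$) yields $\dim X_\Gamma = r$ a.a.s.

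I do not anticipate any genuine obstacle here: the corollary is a direct packaging of Theorem \ref{embed} and the Example. The only point requiring a word of care is the translation between the dimension of the clique complex and clique containment in the underlying graph, and the verification that the hypothesis $r \ge 2$ guarantees $r+1 \ge 3$ so that $K_{r+2}$ has at least one edge and the formula $\nu(K_{r+2}) = 2/(r+1)$ is meaningful; both are routine. One should also note that the two threshold exponents $2/r$ and $2/(r+1)$ are distinct for every $r \ge 2$ (indeed $2/r > 2/(r+1)$), so the interval $n^{-2/r} \ll p \ll n^{-2/(r+1)}$ is nonempty, which is what makes the statement non-vacuous.
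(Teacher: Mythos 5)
Your proof is correct and is exactly the argument the paper intends: the corollary is stated as an immediate consequence of Theorem \ref{embed} together with the Example computing $\tilde\nu(K_{r+1})=2/r$, applied once to $K_{r+1}$ (lower bound on dimension) and once to $K_{r+2}$ (upper bound). Nothing is missing.
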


\begin{example}\label{ex1}{\rm 
Consider a triangulated surface $S$ having a vertex $x$ of degree 3. 
Clearly such triangulation is not clean. Assume that either $S$ is orientable and has genus $>1$ or it is non-orientable and has genus $>2$. 
If $\Gamma$ denotes the 1-skeleton of $S$ then $\nu(S)=\nu(\Gamma)<1/3$. 
Removing the vertex $x$ and the three incident to it edges we obtain a graph $\Gamma'\subset \Gamma$ with $v(\Gamma')=v(\Gamma)-1$ and $e(\Gamma') =e(\Gamma)-3$. 
Since $\nu(\Gamma) <1/3$ we see that 
$$\nu(\Gamma') = \frac{v(\Gamma)-1}{e(\Gamma)-3}<\nu(\Gamma),$$
i.e. $\Gamma$ is not $\nu$-balanced. 
}
\end{example}

The following Theorem is analogous to Theorem 27 from \cite{CCFK}. 

\begin{theorem}\label{thmbalanced} Any clean triangulation of a closed connected surface $S$ with $\chi(S)\ge 0$ is $\nu$-balanced. Moreover, 
if $\chi(S)>0$ then any clean triangulation of $S$ is strictly $\nu$-balanced. 
\end{theorem}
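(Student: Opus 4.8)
I will write $\Gamma=S^{(1)}$ for the $1$-skeleton of the triangulation. The plan is to compute $\nu(\Gamma)$ exactly and then bound $\nu(\Gamma')$ from below for every proper subgraph $\Gamma'$ by reducing to the Euler-type edge inequality for graphs embeddable in surfaces. Since $S$ is closed, every edge lies on exactly two faces, so $L(S)=2e(S)-3f(S)=0$ and formula (\ref{useful}) gives
\[
\nu(\Gamma)=\nu(S)=\frac13+\frac{\chi(S)}{e(S)},\qquad\text{equivalently}\qquad e(S)=3\bigl(v(S)-\chi(S)\bigr).
\]
Because $\chi(S)\ge 0$ this already shows $\nu(\Gamma)\ge 1/3$; and since any triangulation of a closed surface has $v(S)\ge 4$ (and $\chi(S)\le 2$) one also has $\nu(\Gamma)=\tfrac13\cdot v(S)/(v(S)-\chi(S))\le 2/3<1$.

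Next I would reduce to connected subgraphs. Let $\Gamma'\subsetneq\Gamma$ carry at least one edge, and let $C_1,\dots,C_c$ be the components of $\Gamma'$ containing an edge. Since $\Gamma'$ is the disjoint union of the $C_i$ together with some isolated vertices, the mediant inequality gives $\nu(\Gamma')\ge\frac{\sum_i v(C_i)}{\sum_i e(C_i)}\ge\min_i\nu(C_i)$. Each $C_j$ is a connected simple graph with at least one edge, and it is a \emph{proper} subgraph of $\Gamma$ (if $C_j=\Gamma$ then $\Gamma'\supseteq\Gamma$, contradicting properness, as $\Gamma$ is connected). Hence it suffices to prove $\nu(C)\ge\nu(\Gamma)$, with strict inequality when $\chi(S)>0$, for every connected proper subgraph $C\subset\Gamma$ having an edge.

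Now fix such a $C$, with $v'$ vertices and $e'$ edges. If $v'\le 2$ then $C$ is a single edge and $\nu(C)=2>\nu(\Gamma)$. If $v'\ge 3$ then, being a subgraph of $\Gamma$, it is embedded in $S$, so by the standard edge bound for simple graphs on a closed surface of Euler characteristic $\chi(S)$ (for $S^2$, $P^2$, and the torus or Klein bottle this reads $e'\le 3v'-6$, $e'\le 3v'-3$, $e'\le 3v'$ respectively),
\[
e'\ \le\ 3v'-3\chi(S),\qquad\text{so}\qquad\nu(C)=\frac{v'}{e'}\ \ge\ \frac{v'}{3v'-3\chi(S)}=\frac13\cdot\frac{v'}{v'-\chi(S)}.
\]
If $\chi(S)=0$ the right-hand side is exactly $1/3=\nu(\Gamma)$, proving that $\Gamma$ is $\nu$-balanced. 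If $\chi(S)>0$ then $v'-\chi(S)\ge 1>0$, and $\tfrac{v'}{v'-\chi(S)}>\tfrac{v(S)}{v(S)-\chi(S)}$ whenever $v'<v(S)$; since $\nu(\Gamma)=\tfrac13\cdot v(S)/(v(S)-\chi(S))$ by the first display, this yields $\nu(C)>\nu(\Gamma)$, except when $C$ is spanning, in which case $e'\le e(S)-1$ and $\nu(C)=v(S)/e'>v(S)/e(S)=\nu(\Gamma)$ directly. In every case $\nu(C)>\nu(\Gamma)$, so $\Gamma$ is strictly $\nu$-balanced.

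The only delicate point is the edge bound in the third paragraph: a subgraph of a graph triangulating $S$ need not be \emph{cellularly} embedded in $S$, so one cannot apply Euler's formula to $C\subset S$ verbatim. The correct fact to invoke is that a simple graph with at least three vertices that embeds in a closed surface of Euler characteristic $\chi$ has at most $3v-3\chi$ edges; this is classical and is proved by first passing to a cellular embedding in a surface of Euler characteristic $\ge\chi$ and then using $3f\le 2e$. I would also remark that the argument never uses cleanness of the triangulation -- only that $S$ is a closed surface with $\chi(S)\ge 0$ -- so the conclusion in fact holds for all such triangulations; the hypothesis of cleanness is kept because that is the case needed in the applications.
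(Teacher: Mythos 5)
Your proof is correct, and it takes a genuinely different route from the paper. The paper argues homologically inside the surface: for a proper subgraph $\Gamma'$ it forms the clique subcomplex $S'=X_{\Gamma'}$ (this is exactly where cleanness is used, since it needs $S=X_\Gamma$), and it establishes $3\chi(S')+L(S')\ge 3\chi(S)$ via the long exact sequence of the pair $(S,S')$ with $\Z_2$ coefficients, Poincar\'e--Lefschetz duality $\dim H_2(S,S')=k$ (the number of components of $S-S'$), and the combinatorial count $L(S')\ge 3k$ coming from the boundary circles having length at least $3$; formula (\ref{useful}) then converts this into the desired comparison of $\nu(S)$ and $\nu(S')$. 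You instead never leave graph theory: you compute $e(S)=3\bigl(v(S)-\chi(S)\bigr)$ exactly from $L(S)=0$, reduce to connected proper subgraphs by the mediant inequality, and invoke the classical bound $e'\le 3v'-3\chi(S)$ for simple graphs on at least three vertices embeddable in $S$, handling the single-edge and spanning cases separately to get strictness when $\chi(S)>0$. The trade-off is clear: your argument is more elementary in that it uses no homology or duality, and it is strictly more general, since it shows cleanness is irrelevant once $\chi(S)\ge 0$ (consistent with the paper's Remark, which shows cleanness cannot rescue balancedness when $\chi(S)<0$); on the other hand, it imports one nontrivial external fact, namely that a graph embeddable in $S$ embeds cellularly in a surface of Euler characteristic at least $\chi(S)$ so that $3f\le 2e$ applies, which you correctly flag and whose surgery proof is of comparable depth to the duality argument it replaces, while the paper's proof stays entirely within the simplicial and homological toolkit used throughout the rest of the paper. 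The only small housekeeping points are the standard convention that $\nu$ is compared only over subgraphs containing at least one edge, and that the cellular-embedding step should be stated for connected graphs (which is all you use, since you have already passed to components).
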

\begin{proof} Let $\Gamma$ be a graph such that $S$ is the clique complex $S=X_\Gamma$. Let $\Gamma'\subset \Gamma$ be a proper subgraph and let $S'=X_{\Gamma'}$ denotes the clique complex of $\Gamma'$. 
Without loss of generality we may assume that $\Gamma'$ is connected. 
Due to formula (\ref{useful}), the inequality $\nu(S)< \nu(S')$ would follow from 
\begin{eqnarray}\label{interm}
3\chi(S') +L(S') \ge 3 \chi(S),
\end{eqnarray}
since $L(S)=0$, $e(S)>e(S')$ and $\chi(S)\ge 0$.
From now on all homology and cohomology group will have coefficient group $\Z_2$ which will be omitted from the notation. Besides, we will use the symbol 
$b_i'(X)$ to denote $\dim_{\Z_2}H_i(X)$, the $i$-th Betti number with $\Z_2$ coefficients. 
Consider the exact sequence 
\begin{eqnarray}\label{exsec}0\to H_2(S) \to H_2(S,S') \stackrel{j_\ast}\to H_1(S') \to H_1(S) \to H_1(S, S') \to 0.\end{eqnarray}
Here we used that $H_2(S')=0$ (since $S'$ is a proper subcomplex of $S$) and $H_2(S)=\Z_2$. By Poincar\'e duality, the dimension of $H_2(S, S')$ equals $\dim H^0(S-S')=k$, the number of path-connected components of the complement
$S-S'$, see Proposition 3.46 from \cite{Hatcher}. Thus, (\ref{exsec}) implies the inequality
\begin{eqnarray}
\label{ineq1}
b_1'(S)\ge b_1'(S') -k+1.
\end{eqnarray}
Substituting $\chi(S)=2-b_1'(S)$, $\chi(S')=1-b_1'(S')$ into (\ref{interm}) we see that (\ref{interm}) would follows from (\ref{ineq1}) once we show that
$L(S') \ge 3k$. Note that $L(S')=e_1(S')+2e_2(S')$ where $e_i(S')$ denotes the number of edges of $S'$ which have degree $i$, where $i=0,1$. 
If $C_1, \dots, C_k$ denote the boundary circles of the connected components of $S-S'$ then one has 
$$\sum_{j=1}^k |C_j| = e_1(S') +2e_0(S')=L(S'),$$ since each edge of $S'$ having degree one belong to exactly one of the circles $C_j$ and each edge of degree zero belongs to two circles $C_j$. Clearly, $|C_j|\ge 3$ for each $C_j$ and 
the inequality $L(S') \ge 3k$ follows. 
\end{proof}

For a triangulation $S$ of a compact orientable surface $\Sigma_g$ of genus $g$ one has using the formula (\ref{useful}),
\begin{eqnarray}\label{orientable} 
\nu(S)= \frac{1}{3}+\frac{2-2g}{e(S)}.
\end{eqnarray}
Similarly, for a triangulation $S$ of a compact non-orientable surface $N_g$ of genus $g$ one has
\begin{eqnarray}\label{nonorientable}
\nu(S)= \frac{1}{3}+\frac{2-g}{e(S)}.
\end{eqnarray}
Thus we see that $\nu(S) <1/3$ if $S$ is orientable and $g>1$ or if $S$ is non-orientable and $g>2$.

\begin{remark} {\rm It is easy to show that the assumption $\chi(S) \ge 0$ of Theorem \ref{thmbalanced} is necessary. 
More specifically, {\it any closed surface with $\chi(S)<0$ admits a non-$\nu$-balanced clean triangulation.}

Indeed, let $S$ be a clean triangulation of a surface with $\chi(S) <0$; then 
$\nu(S) <1/3$ 
(by (\ref{orientable}) and (\ref{nonorientable})). Let $X\subset S$ be the subcomplex obtained from $S$ by removing an edge $e\subset S$ and the interiors of two adjacent to $e$ 2-simplexes. 
Then 
$$\nu(X)=\frac{v(S)}{e(S)-1}=\frac{e(S)/3+\chi(S)}{e(S)-1}\leq \frac{e(S)/3-1}{e(S)-1}<\frac{1}{3}$$
Let $D$ be a clean triangulated disc with $r$ interior vertices and whose boundary is a simplicial circle with 4 vertices and 4 edges. For any triangulated disc we have (using the Euler - Poincare formula), 
$$e(D)=2v(D)+r-3$$
and since $v(D) = r+4$ we obtain 
$$e(D) = 3r+5.$$
Let $S'$ be the result of gluing $D$ to $X$ with the identification $\partial D=\partial X$. Obviously $S'$ is homeomorphic to $S$. One has 
$$v(S')=v(S)+r \quad \mbox{and}\quad
e(S')=e(X)+e(D)-e(\partial D)=e(X)+3r+1=e(S)+3r.$$
Hence
$$\nu(S')=\frac{v(S)+r}{e(S)+3r}\to\frac{1}{3}$$
tends to $1/3$ as $r\to \infty$. 
Thus, by taking $r$ large enough we shall have $\nu(S')>\nu(X)$. The obtained triangulation $S'$ is clean and unbalanced since $X$ is a subcomplex of $S'$.
}\end{remark}

\begin{remark}{\rm
Theorem 27 from \cite{CCFK} (which is similar to Theorem \ref{thmbalanced})
is valid under an additional assumption $\chi(S) \ge 0$ which is missing in its statement. 
The assumption $\chi(S)\ge 0$ is essential since {\it any closed surface with negative Euler characteristic $\chi(S)<0$ admits a not $\mu$-balanced triangulation. }
} 
\end{remark}
%

\section{Threshold for collapsibility to a graph}

In this section we prove Theorem A which we restate below:

\begin{theorem}\label{thm1} If 
\begin{eqnarray}\label{assumption}p\ll n^{-1/2}\end{eqnarray} then, 
with probability tending to 1 as $n\to \infty$, the clique complex $X_\Gamma$ is simplicially collapsible to a graph, a.a.s. In particular the 
fundamental group $\pi_1(X_\Gamma, x_0)$ of a random 
clique complex $X_\Gamma$, where $\Gamma\in G(n,p)$, 
is free, for any choice of the base point $x_0\in X_\Gamma$. 
Moreover, under the above assumptions each connected component of the 2-skeleton $X_{\Gamma}^{(2)}$ is homotopy equivalent to a wedge of circles and 2-spheres, a.a.s.
\end{theorem}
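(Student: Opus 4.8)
The plan is to combine a deterministic collapsibility criterion with containment estimates from Theorem \ref{embed}. In the range $p\ll n^{-1/2}$ the complex $X_\Gamma$ has dimension $\le 3$ (by Corollary \ref{dimension}, since $n^{-1/2}=n^{-2/4}$), so I only need to collapse away 3-simplices and then 2-simplices. First I would try to collapse all tetrahedra: a free face argument shows that if every tetrahedron of $X_\Gamma$ has a 2-face that is contained in no other tetrahedron (a ``free'' triangle), then one can perform an elementary collapse removing that pair, and after doing this for all tetrahedra one reaches the 2-skeleton $X_\Gamma^{(2)}$. The obstruction to this is a configuration of two tetrahedra sharing a triangle, or more generally a subcomplex in which every triangle of every tetrahedron is ``busy''. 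Such configurations have few vertices relative to edges; for instance two tetrahedra glued along a triangle is the clique complex of $K_5$ minus an edge, which has $5$ vertices and $9$ edges, giving $\nu=5/9<1/2$. A small case analysis of the minimal ``bad'' $3$-dimensional clique subcomplexes, combined with part (A) of Theorem \ref{embed} applied to each (using that each has $\tilde\nu<1/2\le -\alpha$... i.e. $\nu$ small enough that $p\ll n^{-\tilde\nu}$), shows that a.a.s.\ none of them embed, hence a.a.s.\ $X_\Gamma$ collapses to $X_\Gamma^{(2)}$.

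Next I would collapse $X_\Gamma^{(2)}$ down toward a graph as far as possible; the obstruction to collapsing a $2$-complex to a graph is the presence of a subcomplex in which every edge lying in a triangle has degree $\ge 2$, i.e.\ a ``no free edge'' subcomplex. The minimal examples of $2$-dimensional clique complexes with no free edge — the boundary of the tetrahedron $\partial\Delta^3$, and its small subdivisions, and more generally small clean triangulations of surfaces — again all have $\nu$ strictly less than $1/2$: for $\partial\Delta^3$ one has $v=4$, $e=6$, $\nu=2/3$, wait — here the relevant fact is rather that after collapsing, each component of $X_\Gamma^{(2)}$ that is \emph{not} a graph must contain such a sub-configuration, and I would enumerate the possibilities. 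For the ``wedge of circles and 2-spheres'' conclusion, I would argue that after all collapses each component $C$ of $X_\Gamma^{(2)}$ is simply connected or more precisely: the collapse does not change homotopy type, $C$ is a $2$-complex, and a.a.s.\ $H_2(C;\Z)$ is free (no torsion, since by Theorem \ref{embed} no $\Z_2$-homology $2$-cycle such as a triangulated $P^2$ embeds — any clean triangulation of $P^2$ has $\chi=1>0$ and by Theorem \ref{thmbalanced} is strictly $\nu$-balanced with $\nu(P^2)=1/3+1/e<1/2$, so it does not appear a.a.s.). A simply connected $2$-complex with free $H_2$ and with $\pi_2$ generated by embedded spheres is homotopy equivalent to a wedge of $2$-spheres; allowing a free fundamental group from the graph-part gives the wedge of circles and $2$-spheres.

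More carefully for the last point: I would first use Theorem A's own conclusion that $\pi_1(X_\Gamma)$ is free, which forces $\pi_1$ of each component of $X_\Gamma^{(2)}$ to be free; then I would show that a component $C$ of $X_\Gamma^{(2)}$ deformation retracts to a wedge $W$ of circles (a maximal tree collapse on the $1$-part) with $2$-cells attached, and analyze the attaching maps. The key claim is that a.a.s.\ these attaching maps are null-homotopic, equivalently that $C\simeq W\vee(\bigvee S^2)$; this follows once one knows $C$ has the homology of such a wedge (free in degree $2$, which I argued above via exclusion of $P^2$-type subcomplexes and any other torsion-producing surface via Theorem \ref{thmbalanced} and Theorem \ref{embed}) together with a Hurewicz/obstruction-theory argument using freeness of $\pi_1$.

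The main obstacle I expect is the case analysis excluding the ``bad'' subcomplexes: one must identify \emph{all} minimal clique-subcomplexes obstructing collapse to a graph (both the $3$-dimensional ones blocking $X_\Gamma\searrow X_\Gamma^{(2)}$ and the $2$-dimensional ones blocking $X_\Gamma^{(2)}\searrow(\text{graph})$) and verify that each has $\tilde\nu<1/2$ so that Theorem \ref{embed}(A) applies, and that there are only finitely many minimal ones up to isomorphism so a union bound works; the subtlety is that ``minimal obstruction to collapsibility'' is not obviously a finite list, so I would instead argue via a local/greedy collapsing procedure whose only possible failure is the appearance of a \emph{fixed finite} configuration (a pair of adjacent top simplices with all faces busy) whose $\nu$ can be checked directly, rather than trying to classify all closed clean triangulations of surfaces.
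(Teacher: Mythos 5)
There is a genuine gap, and it sits exactly where you flag your own unease: the second stage, collapsing the two\-dimensional part down to a graph. If the greedy edge\-collapses stop before reaching a graph, the terminal object contains a closed pure $2$-complex, and these form an infinite family (arbitrarily large triangulated spheres, tori, \dots) with no fixed finite configuration that both witnesses the failure and has $\nu\le 1/2$: the two smallest closed examples, the tetrahedron boundary ${\cal S}_1$ and the $5$-vertex sphere ${\cal S}_2$, have $\nu=2/3$ and $5/9$, both larger than $1/2$, so Theorem \ref{embed} cannot exclude them --- indeed they occur in $X_\Gamma$ a.a.s.\ in the upper part of the range (which is precisely why the full $2$-skeleton is \emph{not} collapsible to a graph, as the paper remarks after Theorem A). Your fallback, a \lq\lq pair of adjacent top simplices with all faces busy\rq\rq, does not help: \lq\lq busy\rq\rq\ is a condition relative to the ambient complex, and inside ${\cal S}_1$ every adjacent pair of triangles has all edges of degree $2$ even though $\nu({\cal S}_1)=2/3$; to get a containment contradiction you need an actual subcomplex of small $\nu$. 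What the paper supplies, and your proposal lacks, is the deterministic Theorem \ref{thm2}: every closed pure $2$-complex contains ${\cal S}_1$, ${\cal S}_2$, or a member of an explicitly constructed \emph{infinite} family ${\cal L}$ all of whose members satisfy $\nu\le 1/2$, with the crucial counting estimate $|{\cal L}_E|\le A\cdot B^E$. The proof of Theorem \ref{thm1} then (i) notes that ${\cal S}_1,{\cal S}_2$, though present, are destroyed by the $3$-cell collapses (they span solid tetrahedra, and the free $2$-faces removed in those collapses lie on them), so the terminal complex avoids them, and (ii) rules out all of ${\cal L}$ by the union bound $\sum_E|{\cal L}_E|(n^{1/2}p)^E\to 0$, which needs the exponential count, not merely finiteness of a list of minimal obstructions. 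Neither the structure theorem nor the count is routine; they are the heart of the argument and are absent from your outline.

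Two smaller points. In the first stage your arithmetic is off: two tetrahedra sharing a triangle form the clique complex of $K_5$ minus an edge, with $\nu=5/9>1/2$, so this configuration cannot be excluded (it appears a.a.s.\ when, e.g., $n^{-5/9}\ll p\ll n^{-1/2}$); fortunately it is also not an obstruction --- a $3$-simplex lacks a free face only when at least two of its $2$-faces are shared, and the complex one must exclude is a union of \emph{three} tetrahedra, with $\nu=1/2$, exactly as in the paper. Relatedly, collapsing a $3$-simplex through a free $2$-face deletes that $2$-face, so you do not \lq\lq reach the $2$-skeleton\rq\rq\ but a proper subcomplex of it; this is essential rather than cosmetic, for the reason above. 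Finally, in the \lq\lq moreover\rq\rq\ part, excluding triangulated projective planes of \emph{all} sizes (they satisfy $\nu=1/3+1/e(S)<1/2$ but form infinitely many isomorphism types) again requires a counting/union-bound argument rather than Theorem \ref{embed} applied to a fixed complex.
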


The proof of Theorem \ref{thm1} uses a deterministic combinatorial assertion described below as Theorem \ref{thm2}. In its statement we use the notation 
\begin{eqnarray}\label{rho}\nu(S)=\frac{v(S)}{e(S)}\end{eqnarray}
where $S$ is a simplicial 2-complex and $v(S)$ and $e(S)$ denote the number of its vertices and edges. 
We will also use the invariant
\begin{eqnarray}
\tilde \nu(X) = \min_{S\subset X} \nu(S),
\end{eqnarray}
where $S$ runs over all subcomplexes of $X$. 

We shall denote by ${\cal S}_1$ the tetrahedron (the 2-complex 
homeomorphic to the sphere $S^2$ and having 
4 vertices, 6 edges and 4 faces) and by ${\cal S}_2$ the triangulation of $S^2$ having 5 vertices, 9 edges and 6 faces. 
Clearly, $\nu({\cal S}_1)=2/3>1/2$ and $\nu({\cal S}_2)=5/9>1/2$. 
The complexes ${\cal S}_1$ and ${\cal S}_2$ play a special role in our study: Theorem \ref{thm2} below implies that {\it any closed 2-complex $X$ satisfying 
$\tilde \nu(X)> 1/2$ contains either ${\cal S}_1$ or ${\cal S}_2$ as a simplicial subcomplex.} 

\begin{theorem}\label{thm2}
There exists an infinite set $\cal L$ of isomorphism types of finite simplicial 2-complexes satisfying the following properties:
\begin{enumerate}
  \item[{(1)}] for any $S\in \cal L$ one has $\nu(S)\le 1/2$;
  \item[{(2)}] the set $\cal L$ has at most exponential size in the following sense: for an integer $E$ let ${\cal L}_E$ denote the set $\{S\in {\cal L}; e(S)\le E\}$. 
  Then for some positive constants $A$ and $B$ one has 
  $$|{\cal L}_E|\le A\cdot B^E,$$
where $A$ and $B$ are independent of $E$;
  \item[{(3)}] any closed pure 2-complex $X$ contains a simplicial subcomplex isomorphic to some $S\in {\cal L}\cup \{{{\cal S}}_1, {{\cal S}}_2\}$.  
  \end{enumerate}
\end{theorem}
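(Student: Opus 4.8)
The plan is to construct $\mathcal{L}$ explicitly as a family of triangulated surfaces (and perhaps disks), exploiting the structural dichotomy that a closed pure 2-complex either contains a ``small'' sphere-like configuration ($\mathcal{S}_1$ or $\mathcal{S}_2$) or else has all edge-degrees and vertex-links large enough that some embedded surface in it has many edges relative to vertices. Concretely, I would first reduce to the strongly connected case: a closed pure 2-complex decomposes along its 1-skeleton into strongly connected pieces, and if we can find the required subcomplex in one piece we are done. Within a strongly connected closed pure 2-complex $X$, consider the minimum edge-degree; if some edge has degree $2$ and the two faces on it, together with the link of one of their vertices, force a small sphere, we land in $\{\mathcal{S}_1,\mathcal{S}_2\}$. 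Otherwise every edge has degree $\ge 3$, or more carefully the local structure is rich, and we will extract an embedded closed surface.

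The core of the argument is the following: in a closed pure 2-complex where no vertex has a ``degenerate'' link, one can find an embedded subsurface $S\subset X$ — for instance by taking the link of a vertex (a graph), finding a cycle in it, and coning, or by a more global surface-extraction — and then apply Theorem~\ref{thmbalanced} together with formulas (\ref{orientable})–(\ref{nonorientable}). The key numerical point is that a triangulated closed surface $S$ with $e(S)$ edges satisfies $\nu(S)=1/3+O(1/e(S))$, so for $e(S)$ large we get $\nu(S)<1/2$ automatically, and in fact $\nu(S)\le 1/2$ as soon as $e(S)\ge 7$ or so. Thus I would let $\mathcal{L}$ be the set of all (isomorphism types of) clean triangulations of closed surfaces of nonnegative Euler characteristic together with all triangulations of closed surfaces of negative Euler characteristic, intersected with $\{\nu\le 1/2\}$ — by the remark after Theorem~\ref{thmbalanced} this set is infinite, since e.g. the sphere has clean triangulations of arbitrarily many vertices. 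Property (1) then holds by construction. For property (3) I must show every closed pure 2-complex $X$ contains a member of $\mathcal{L}\cup\{\mathcal{S}_1,\mathcal{S}_2\}$: if $X$ already embeds a surface with $\nu\le 1/2$ we are done; the only obstruction is that $X$ might embed only ``small'' surfaces with $\nu>1/2$, and the only closed surfaces with $\nu>1/2$ are spheres with at most $6$ edges (impossible: the minimal triangulation of $S^2$ is $\mathcal{S}_1$ with $6$ edges and $\nu=2/3$) — so the small surfaces forcing us out of $\mathcal{L}$ are exactly $\mathcal{S}_1$ and a bounded list of others like $\mathcal{S}_2$, which I would absorb into the finite exceptional set. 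This forces a finite-case-check that the only closed surface triangulations with $\nu>1/2$ are $\mathcal{S}_1$, $\mathcal{S}_2$ and possibly finitely many more that are themselves supercomplexes of $\mathcal{S}_1$ or $\mathcal{S}_2$; if instead $X$ contains no embedded closed surface at all, I would argue directly that closedness plus purity forces such a surface to exist (a minimal closed pure subcomplex is a pseudomanifold, hence carries a surface).

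For property (2), the exponential bound: a clean triangulation of a surface, or indeed any triangulation with $\le E$ edges, has $\le E+1$ vertices and $\le \frac{2}{3}E$ faces, so it is a simplicial complex on a bounded vertex set; the number of simplicial complexes with at most $E$ edges is crudely at most $\binom{E+1}{2}$ choices of edge set times $\binom{\binom{E+1}{3}}{\le 2E/3}$ choices of face set, which is $\le A\cdot B^E$ for suitable absolute constants — this is a routine counting estimate that I would state and not belabor. The point is that bounding $e(S)$ bounds $v(S)$ (since $S$ is connected, or each component is, so $v\le e+(\text{number of components})\le 2e$ at worst, better $v\le e+1$ if connected), which bounds everything.

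The hard part will be property (3): making precise the claim that a closed pure 2-complex — which can be wildly non-manifold, with edges of arbitrarily high degree and complicated vertex links — always contains either one of the two designated small spheres or an embedded surface of low $\nu$. The clean delicate step is handling non-manifold points: I expect to need a local surgery/link argument showing that if all edge-degrees are $\ge 3$, or if a vertex link is not a disjoint union of cycles, one can still carve out an embedded subsurface, and then trace carefully that if the only subsurfaces one can carve are the forbidden small ones, $X$ was already $\mathcal{S}_1$ or $\mathcal{S}_2$-containing. This is precisely the content of the parenthetical ``Theorem~\ref{thm2} implies that any closed 2-complex $X$ with $\tilde\nu(X)>1/2$ contains $\mathcal{S}_1$ or $\mathcal{S}_2$,'' and I would organize the proof so that the contrapositive of (3) — $\tilde\nu(X)>1/2$ forbids all members of $\mathcal{L}$ — pins $X$ down to contain $\mathcal{S}_1$ or $\mathcal{S}_2$ via this surface-extraction plus the numerics of (\ref{orientable})–(\ref{nonorientable}).
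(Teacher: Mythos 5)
Your proposal diverges from the paper's in a fundamental way, and the divergence exposes a fatal gap in property (3). You propose taking $\mathcal{L}$ to be a family of triangulations of \emph{closed surfaces} with $\nu\le 1/2$, and reduce (3) to the claim that every closed pure $2$-complex contains an embedded closed surface (or $\mathcal{S}_1$, $\mathcal{S}_2$). This claim is false, and your stated justification --- ``a minimal closed pure subcomplex is a pseudomanifold, hence carries a surface'' --- fails on both counts. First, a minimal closed pure $2$-complex (one with no proper closed pure $2$-dimensional subcomplex) need not be a pseudomanifold: any simplicial Moore surface $M(\Z_m,1)$ for $m\ge 3$ is closed, pure, and has no proper closed pure $2$-dimensional subcomplex (removing any face creates an uncollapsible boundary), yet its singular circle consists of edges of degree $m\ge 3$. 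Second, even a genuine pseudomanifold need not be, or contain, a surface: the pinched sphere (two antipodes of $S^2$ identified) has all edges of degree $2$, but the link of the pinch vertex is a disjoint union of two circles, and no proper subcomplex is closed, so it contains no embedded closed surface. Such complexes are exactly what the paper must handle, and this is why the paper's $\mathcal{L}$ consists not of closed surfaces but of \emph{quotients of triangulated discs} with one internal vertex, bounded vertex defect, and a bounded number of attached external edges --- scrolls, discs with two scrolls, discs with external edges, etc. The paper then proves (3) by an iterative local argument: cone over a cycle in a vertex link, grow the resulting disc $D\subset X$ one boundary vertex at a time, and show that unless the process terminates in $\mathcal{S}_1$ or $\mathcal{S}_2$, one of the forbidden disc-quotient configurations must appear. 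This argument never needs an embedded surface and is genuinely different from yours.

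There is a secondary issue with your property (2). The crude count you propose --- ``$\binom{E+1}{2}$ choices of edge set times $\binom{\binom{E+1}{3}}{\le 2E/3}$ choices of face set'' --- is not exponential in $E$: the number of subsets of a $\Theta(E^2)$-element edge pool is $2^{\Theta(E^2)}$, and choosing $\Theta(E)$ faces from a $\Theta(E^3)$-element pool is $E^{\Theta(E)}$, both super-exponential. You would need a genuine enumeration theorem for the specific class $\mathcal{L}$; the paper uses Brown's formula for the number of triangulations of a disc, which gives a clean $4^v$ bound, and then multiplies by polynomial factors for the bounded quotienting and for adding up to two external edges. This step is routine for the paper's disc-based $\mathcal{L}$ but not for arbitrary simplicial complexes, so even if you repaired (3) you would need to replace the crude count with an actual enumeration result for your class of surface triangulations.
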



Property (3) is the main universal feature of the set $\cal L$. 

\begin{proof}[Proof of Theorem \ref{thm2}]
We start with a few remarks: 

For a triangulated 2-disc $X$ having $v$ vertices such that among them there are $v_i$ internal vertices, one has
\begin{eqnarray}\label{disc}
\nu(X) = \frac{v}{2v+v_i -3}.
\end{eqnarray}
Thus one has $\nu(X)=1/2$ for $v_i=3$ and $\nu(X) >1/2$ only for $v_i=0,1,2$. 
Formula (\ref{disc}) follows from the relations $3f=2e-v_\partial$ and $v-e+f=1$ where $v_\partial = v-v_i$ is the number of vertices on the boundary. 

\begin{figure}[h]
\centering
\includegraphics[width=0.25\textwidth]{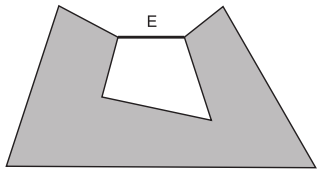}
\caption{External edge $E$.}\label{externaledge}
\end{figure}
The operation of {\it adding an external edge} to a simplicial complex $X$ gives a simplicial complex $X'=X\cup E$ where $E$ is a arc (i.e. a space homeomorphic to $[0,1]$) and $X\cap E= \partial E$, see Figure \ref{externaledge}. 
Clearly $\nu(X') <\nu(X)$.

If $X$ is obtained from a triangulated disc with $v_i$ internal vertices by adding $c$ external edges, then 
\begin{eqnarray}\nu(X)= \frac{v}{2v+v_i+c-3}\end{eqnarray}
and therefore we see that $\nu(X) \le 1/2$ if and only if $v_i+c\ge 3$. 


We denote by ${\cal L}$ the set of isomorphism types of finite simplicial 2-complexes $S$ 
having the following properties:
the pure part $S_0$ of $S$ admits a surjective simplicial map

$f: S'\to S_0$ where: 

(a) $S'$ is a triangulated disc with one internal vertex; 

(b) $f$ is bijective on the set of faces; 

(c) the image of any edge of $S'$ is an edge of $S_0$; 

(d) $v(S')-v(S_0) \le 2;$

(e) the complex $S$ is obtained from its pure part $S_0$ by adding at most $2$ external edges;

(f) and finally we require that 
\begin{eqnarray}\label{half}\nu(S)\le 1/2.\end{eqnarray}

Typical examples of complexes from $\LL$ are given below. 

If $v(S')-v(S_0)=a$ (\lq\lq the vertex defect\rq\rq) and $e(S')-e(S_0) =b$ (\lq\lq the edge defect\rq\rq) then using 
the inequality $e(S')=2v(S')-2$  (which follows from (\ref{disc})) 
we obtain 
that  (\ref{half}) is equivalent to 
\begin{eqnarray}\label{half1}
2a+c\ge b+2,\end{eqnarray}
where $c =0,1,2$ denotes the number of external edges in $S$. 
Since $a\le 2$ and $c\le 2$, the total number of solutions $(a,b,c)$ to (\ref{half1}) is 19.

Next we show that the set $\cal L$ satisfies property (2) of Theorem \ref{thm2}. 
According to W. Brown \cite{Brown}, the number of isomorphism types of triangulations of the disc $S'$ with $v$ vertices having one internal vertex is less than or equal to
$$ \frac{2v-5}{v-1}\cdot \binom {2v-6}{v-2} \le 2\cdot  2^{2v-6}< 4^v;$$
here we use formula (4.7) from \cite{Brown} with $v=m+4$ and $n=1$. This implies that the number of isomorphism types of triangulations of the disc $S'$ with at most 
$v$ vertices and one internal vertex is less than or equal to $$1+4+\dots +4^v < 4/3\cdot 4^v.$$

We want to estimate above the number of elements $S\in \cal L$ satisfying $e(S)\le E$. 
For $S\in \cal L$ with 
$e(S)\le E$, let $f: S'\to S_0$ be a surjective simplicial map as in the definition of $\LL$. Here $S_0$ is the pure part of $S$ and $S$ is obtained from $S_0$ 
by adding $c = \, 0,\, 1,\,  2$ edges. Then using (\ref{half}), we find $v(S_0)\le  e(S_0)/2+1\le E/2$ and 
$$v(S') \le v(S_0)+2 = v(S)+2 \le E/2+2.$$ The complex $S_0$ is obtained from $S'$ by identifying at most 2 pairs of vertices or by identifying a triple of vertices; the identification of vertices determines the identification of edges. 
As we noted above, there are 19 types of quotients.   
Hence we obtain (assuming that $E\ge 6$)
\begin{eqnarray*}|{\cal L}_E| \le 4/3 \cdot 4^{E/2+2}\cdot 19\cdot  (E/2+2)^4 \cdot  (E/2+2)^4.
\end{eqnarray*}
In the above inequality the first factor $(E/2+2)^4$ accounts for the ways of doing identifications of vertices and the second factor $(E/2+2)^4$ accounts for the ways to add 2 additional edges. Since $(E/2+2)^4 \le 4^{E/2+2}$ we see that 
$$
|{\cal L}_E| \le \frac{4^7\cdot 19}{3}\cdot 8^E.$$
This proves that the set $\cal L$ has property (2) of Theorem \ref{thm2}. 

Below we show that the set $\cal L$ has property (3) of Theorem \ref{thm2}.
We start by describing examples of complexes from $\LL$. 

{\bf Example 1}: Triangulated disc with one internal point and two added external edges (i.e. $v_i=1$ and $c=2$).

{\bf Example 2}: Triangulated disc with two internal points and one added external edge (i.e. $v_i=2$ and $c=1$). 

Note that a triangulated disc with $k$ internal points may be obtained as a quotient of a triangulated disc with $k-1$ internal points by identifying two vertices and two adjacent edges on the boundary. This fact is illustrated by Figure \ref{cutting}. 
\begin{figure}[h]
\centering
\includegraphics[width=0.6\textwidth]{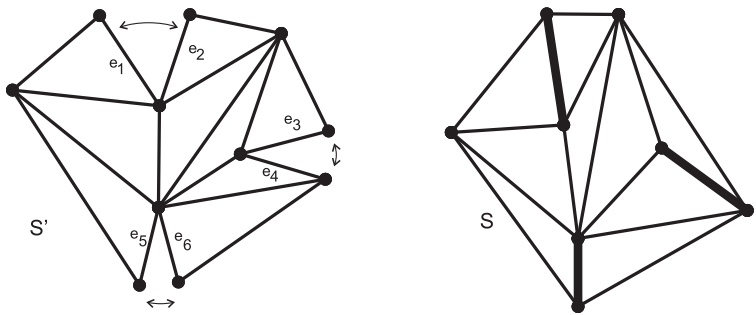}
\caption{Disc with 3 internal points as a quotient of a disc with no internal points; \newline  3 pairs of adjacent edges are identified.}\label{cutting}
\end{figure}


{\bf Example 3}: Consider a simplicial surjective map $f: X'\to X$ where $X'$ is a triangulated disc and $f$ is bijective on faces and every edge of $X'$ is mapped to an edge 
of $X$ and such that $v(X') -v(X)=1$ and $e(X')-e(X)=1$, i.e. exactly two vertices and two (adjacent) edges are identified. 
If $X'$ has $i$ internal vertices then we 
 call such an $X$ {\it a scroll with $i$ internal points}. 
\begin{figure}[h]
\centering
\includegraphics[width=0.25\textwidth]{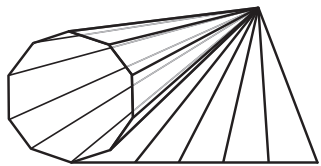}
\caption{Example of a scroll without internal points.}\label{scroll}
\end{figure}

A scroll with two internal points is an element of $\LL$. In particular, every triangulated disc with 3 internal points belongs to $\LL$. 

{\bf Example 4}: A scroll with one internal point and with one external edge added is an element of $\LL$. 

{\bf Example 5}: As above, consider a simplicial surjective map $f: X'\to X$ where $X'$ is a triangulated disc and $f$ is bijective on faces and every edge of $X'$ is mapped to an edge 
of $X$. Assume that exactly two pairs of vertices and two pairs of adjacent edges are identified, i.e. $v(X') -v(X)=2$ and $e(X')-e(X)=2$. 
If $X'$ has one internal vertex then 
$X\in \LL$. We shall call such an $X$ {\it disc with one internal point and with two scrolls}.

%
%
%

Now we show that the set $\cal L$ has property (3) of Theorem \ref{thm2}.
We shall assume the negation of property (3) and arrive to a contradiction. 
Hence, below we assume that there exists a  closed pure 2-complex $X$ which contains no subcomplexes isomorphic to any
$S\in {\cal L}\cup \{{\cal S}_1, {\cal S}_2\}$. 

Consider a vertex $v\in X$ and let $\lk_X(v)$ be the link of $v$ in $X$; it is a graph having no univalent vertices (since $X$ is closed) 
and hence each connected 
component of $\lk_X(v)$ contains a simple cycle $C\subset \lk_X(v)$. 
The cone $D=vC\subset X$ with base $C$ and apex $v$ is a disc with one internal point. 

%
%
%

There may exist at most one {\it external edge}, i.e. an edge $e\subset X$ such that $e\not\subset D$ and $\partial e\subset D$ (since otherwise the union of $D$ and of two such edges would be isomorphic to an element of $\LL$, see Example 1). 
Consider a vertex $w\in C=\partial D$ which is not incident to an external edge (such point exists since $C$ has at least 3 vertices). Let $w', w''\in C$ be the two neighbours of $w$ along $C$. 
The link $\lk_X(w)$ of the vertex $w$ in $X$ is a graph without univalent vertices and the link $\alpha=\lk_D(w)$ is an arc connecting the points $w'$ and $w''$. 
It is obvious that the arc $\alpha$ is contained in a subgraph $\Gamma\subset \lk_X(w)$ which is homeomorphic either to the circle or to one of the two graphs shown in Figure
\ref{specs} (the graph $\Gamma$ can be obtained by extending $\alpha$ in $\lk_X(w)$ until the extension \lq\lq hits itself\rq\rq). 
\begin{figure}[h]
\centering
\includegraphics[width=0.6\textwidth]{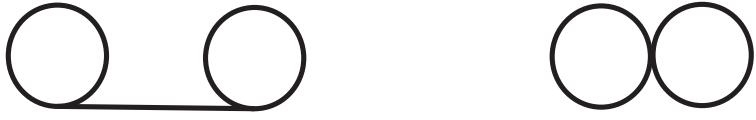}
\caption{Graphs containing the arc $\alpha$.}\label{specs}
\end{figure}
%
Thus the complex $X$ contains the cone $w\Gamma$ over $\Gamma$ with apex $w$. 

The intersection  
$w\Gamma\cap D$ clearly contains $w\alpha$, the cone over the arc $\alpha$. Any vertex $u$ of $(w\Gamma\cap D) - w\alpha$ corresponds to an edge $e\subset X$ such that $\partial e=\{w, u\}$ and $e\not\subset D$. By construction, we know that there are no such external edges. Thus, we see that the set of vertices of $w\Gamma\cap D$ coincides with the set of vertices of $w\alpha$. 

In the case when $\Gamma$ is homeomorphic to one of the graphs shown in Figure \ref{specs} the union $w\Gamma\cup D\subset X$ is a disc with one internal point and with two scrolls which is impossible due to Example 5. Thus the only remaining possibility is that $\Gamma$ is a simple circle. 
The union $w\Gamma\cup D$ can be the tetrahedron ${\cal S}_1$ (iff $\Gamma-\int (\alpha)$ is a single edge contained in $\partial D$); otherwise the union 
$w\Gamma\cup D$ is
a disc. 
The first possibility contradicts our assumptions (we know that $X$ does not contain ${\cal S}_1$ as a subcomplex), 
therefore the union $D_1=w\Gamma\cup D\subset X$ is a disc with two internal points $v, w$. 

Next we repeat the above arguments applied to $D_1\subset X$ instead of $D\subset X$. Consider a point $w_1\in \partial D_1$ and its two neighbours $w'_1, w''_1\in C_1=\partial D_1$. The link $\lk_X(w_1)$ is a graph without univalent vertices and $\alpha_1=\lk_D(w_1)$ is an arc connecting the points $w'_1, w''_1$. 
The arc $\alpha_1$ is contained in a subgraph $\Gamma\subset \lk_X(w_1)$ which is homeomorphic either to the circle or to one of the graphs shown in 
Figure \ref{specs}. The set of vertices of $\Gamma$ contained in $D_1$ coincides with the set of vertices of $\alpha_1$ (since otherwise $X$ would contain a disc with two internal points and with one external edge which contradicts  Example 2). 
In the case when $\Gamma$ is homeomorphic to one of the graphs of Figure \ref{specs} the union $w_1\Gamma \cup D_1\subset X$ contains a scroll with two internal points which is impossible because of Example 3. 
If $\Gamma$ is a simple circle then the union $w_1\Gamma\cup D_1$ is either ${\cal S}_2$ 
(iff $\Gamma-\int (\alpha_1)$ is a single edge contained in $\partial D_1$), 
or the union $D_2= w_1\Gamma\cup D_1$ is a disc with 3 internal points $v, w, w_1$. Both these possibilities contradict our assumptions concerning $X$. This completes the proof. 

\end{proof}

\begin{proof}[Proof of Theorem \ref{thm1}]  
Consider a random graph $\Gamma\in G(n,p)$ and its clique complex $X_\Gamma$. Clearly, $X_\Gamma$ is connected if and ony if $\Gamma$ is connected. 
Since $p\ll n^{-1/2}$ we know that $\dim X_\Gamma\le 3$ a.a.s.; see Corollary \ref{dimension}. 
The 3-simplexes of $X_\Gamma$ are in one-to-one correspondence with the embedding of the complete graph $K_4$ into $\Gamma$. 
Let us show that each 3-simplex of 
$X_\Gamma$ has at least three free faces. Indeed, assume that there is a 3-simplex in $X_\Gamma$ with less than three free faces. Then the complex $S$ formed as the union 
$S=S_1\cup S_2\cup S_3$ of three tetrahedra $S_1, S_2, S_3$, where the intersections $S_1\cap S_2$ and $S_1\cap S_3$ are 2-simplexes and $S_2\cap S_3$ is an edge,
would be embeddable into $X_\Gamma$; however this is impossible due to Theorem \ref{embed} since $\nu(S)=6/12 = 1/2$ and $p\ll n^{-1/2}$. 
Choosing a free face in each 3-simplex and performing collapse $X_\Gamma\searrow X'_\Gamma$ we obtain a 2-complex $X'_\Gamma$. 
Clearly, $X'_\Gamma$ does not contain ${\cal S}_1$ and ${\cal S}_2$ as subcomplexes.

%

Next we perform a sequence of simplicial collapses $$X'_\Gamma\searrow X''_\Gamma\searrow X'''_\Gamma\searrow\dots$$ where on each step we collapse all free faces of 2-simplexes. After finitely many such collapses we obtain a complex $X_\Gamma^\infty$ which is either (a) a graph, or (b) a closed 2-dimensional simplicial complex. We know that 
$X_\Gamma^\infty$ contains neither ${\cal S}_1$ nor ${\cal S}_2$ as a subcomplex, and besides, $\pi_1(X_\Gamma, x_0) = \pi_1(X_\Gamma^\infty, x_0)$ for any base point $x_0$. 

We claim that option (b) happens with probability tending to zero as $n\to \infty$; in other words, $X_\Gamma^\infty$ is a graph, a.a.s. Indeed, by Theorem \ref{thm2} if $X^\infty_\Gamma$ is not a graph then it admits a simplicial embedding $S \to X_\Gamma^\infty$ of some $S\in \cal L$. However 
for a fixed $S\in \cal L$ one has 
$$\PP(S\subset X_\Gamma^\infty) \le \PP(S\subset X_\Gamma)\le n^{v(S)}p^{e(S)}\le  \left(n^{1/2}p\right)^{e(S)}$$
and 
therefore (using Theorem \ref{thm2}) the probability that $X_\Gamma^\infty$ is not a graph is less than or equal to
\begin{eqnarray*}\sum_{S\in {\cal L}} \PP(S\subset X_\Gamma^\infty) &\le& \sum_{S\in {\cal L}} \left(n^{1/2}p\right)^{e(S)} \\
&=& \sum_{E\ge 1} |{\cal L}_E| \left(n^{1/2}p\right)^E \le A\sum_{E\ge 1} \left(bn^{1/2}p\right)^E \to 0\end{eqnarray*}
as $n\to \infty$ since we assume that $pn^{1/2}\to 0$.  

\end{proof}

\section{Uniform hyperbolicity}

Let $X$ be a finite simplicial complex. For a simplicial loop $\gamma: S^1\to X^{(1)}\subset X$ we denote by $|\gamma|$ the length of $\gamma$. If $\gamma$ is null-homotopic,
$\gamma \sim 1$, we denote by $A_X(\gamma)$ {\it the area} of $\gamma$, i.e. the minimal number of triangles in any simplicial filling $V$ for $\gamma$. 
A simplicial filling (or a simplicial Van Kampen diagram) for a loop $\gamma$ is defined as a pair of simplicial maps $S^1\stackrel{i}\to V\stackrel{b}\to X$ 
such that $\gamma=b\circ i$ and the mapping cylinder of $i$ is a disc with boundary $S^1\times 0$, see \cite{BHK}.   

Clearly $I(X)=I(X^{(2)})$ i.e. the isoperimetric constant $I(X)$ depends only on the 2-skeleton $X^{(2)}$. 


Define the following invariant of $X$
$$I(X)= \inf \left\{\frac{|\gamma|}{A_X(\gamma)}; \quad \gamma: S^1\to X^{(1)}, \gamma\sim 1 \quad \mbox{in $X$}\right\}\, \in \, \R.$$

The inequality $I(X)\ge a$ means that for any null-homotopic loop $\gamma$ in $X$ one has the isoperimetric inequality $A_X(\gamma)\le a^{-1}\cdot |\gamma|$. 
The inequality $I(X) < a$ means that there exists a null-homotopic loop $\gamma$ in $X$ with $A_X(\gamma) >a^{-1}\cdot |\gamma|$, i.e. $\gamma$ is null-homotopic but does not bound a disk of area less than $a^{-1}\cdot |\gamma|$. 

%
%

It is well known that $I(X)>0$ if and only if $\pi_1(X)$ is {\it hyperbolic} in the sense of M. Gromov \cite{Gromov87}.

\begin{example}\label{extorus} {\rm For $X=T^2$ one has  $I(X) =0$. }
\end{example}
%

It is known that the number $I(X)$ coincides with the infimum of the ratios ${|\gamma|}\cdot{A_X(\gamma)}^{-1}$ where $\gamma$ runs over all null-homotopic simplicial {\it prime} loops in $X$, i.e. such that their lifts to the universal cover $\tilde X$ of $X$ are simple. 
Note that any simplicial filling $S^1\stackrel{i}\to V\stackrel{b}\to X$ for a prime loop $\gamma: S^1\to X$ has the property that 
$V$ is a simplicial disc and $i$ is a homeomorphism $i: S^1\to \partial V$.  Hence for prime loops $\gamma$ the area $A_X(\gamma)$ coincides with the minimal number of 2-simplexes in any simplicial spanning disc for $\gamma$.

The following Theorem \ref{hyp} 
gives a uniform isoperimetric constant for random complexes $X_\Gamma$ where $\Gamma\in G(n, p)$. 
It is a slightly stronger statement than simply 
hyperbolicity of the fundamental group of $Y$. 

\begin{theorem}\label{hyp} Suppose that for some $\epsilon>0$ the probability parameter $p$ satisfies
\begin{eqnarray}p\ll n^{-1/3-\epsilon}.\label{babsonrange}
\end{eqnarray}
Then there exists a constant $c_\epsilon>0$ depending only on $\epsilon$ such that the clique complex $X_\Gamma$ of a random graph
$\Gamma\in G(n, p)$, with probability tending to 1 as $n\to \infty$, has the following property: any subcomplex $Y\subset X_\Gamma$ 
satisfies $I(Y)\ge c_\epsilon$; in particular, for any subcomplex $Y\subset X_\Gamma$ the fundamental group $\pi_1(Y)$ is hyperbolic, a.a.s.
\end{theorem}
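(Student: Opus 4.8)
The plan is to show that a random clique complex $X_\Gamma$ with $p\ll n^{-1/3-\epsilon}$ cannot contain, as a subcomplex, any ``efficient'' simplicial disc — one whose area is large compared to the length of its boundary — and to derive from this a uniform lower bound on $I(Y)$ for all subcomplexes $Y$ simultaneously. The key point is that by the remark following the definition of $I(X)$, it suffices to control prime null-homotopic loops $\gamma$ in $Y$: for such a loop any minimal simplicial filling is an embedded simplicial disc $V\subset Y\subset X_\Gamma$ with $\partial V=\gamma$, so $A_Y(\gamma)=f(V)$ and $|\gamma|=|\partial V|$. Thus $I(Y)<c$ for some subcomplex $Y$ forces the existence of a simplicial disc $V$ embedded in $X_\Gamma$ with $f(V)>c^{-1}|\partial V|$, i.e. a disc that is ``area-inefficient''. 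The whole statement therefore reduces to the assertion that, a.a.s., $X_\Gamma$ contains no such disc, with $c=c_\epsilon$ chosen appropriately.

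First I would pass to a minimal-area filling disc $V$ and observe standard normalizations: $V$ may be taken to have no internal vertices of degree that would allow a further collapse or simplification, and in particular one may assume $V$ is \emph{clean} in the sense of the paper (any triple of its vertices spanning a triangle in $X_\Gamma$ already spans a face of $V$) — otherwise one could replace $V$ by a lower-area disc, using that $X_\Gamma$ is a clique complex. For a clean triangulated disc with $v$ vertices, $v_i$ of them internal, formula (\ref{disc}) gives $\nu(V)=v/(2v+v_i-3)$, hence $\nu(V)\le 1/2$ precisely when $v_i\ge 3$, and more quantitatively $\nu(V)\to 1/3$ as $v_i/v\to 1$; since $|\partial V|=v-v_i$ and $f(V)=2v-v_i-2$ for a disc (from $3f=2e-v_\partial$ and $v-e+f=1$), an area-inefficient disc is exactly one in which the boundary is short relative to the number of faces, which forces $v_i$ to be a definite fraction of $v$, and therefore forces $\nu(V)\le 1/3+\delta$ for a $\delta=\delta(c)$ that shrinks with $c$. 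The same conclusion should persist for $\tilde\nu(V)=\min_{V'\subset V}\nu(V')$: a minimal filling cannot contain a proper sub-configuration with $\nu<1/3-\epsilon'$ without that sub-configuration itself being forbidden in $X_\Gamma$ by Theorem \ref{embed}, so up to discarding such disc diagrams we get $\tilde\nu(V)\ge 1/3-\epsilon/2$ while $\nu(V)$, and a fortiori $\tilde\nu(V)$, is $\le 1/3+\delta(c)$.

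Now I invoke Theorem \ref{embed}: if $X_\Gamma$ embeds $V$ then $p\gg n^{-\tilde\nu(V)}$, i.e. $p$ cannot be $\ll n^{-\tilde\nu(V)}$. Taking $c_\epsilon$ small enough that $\delta(c_\epsilon)<\epsilon$, every area-inefficient clean disc $V$ has $\tilde\nu(V)<1/3+\epsilon$... wait, that is the wrong direction, so the actual bookkeeping is that inefficiency forces $\tilde\nu(V)$ close to $1/3$ from \emph{above} only when $v_i$ is small; an honest area-inefficient disc must have $v_i$ large and hence $\nu(V)$ close to $1/3$ from both sides, so one fixes $c_\epsilon$ so that inefficiency implies $\tilde\nu(V)>1/3-\epsilon$, whence $p\ll n^{-1/3-\epsilon}\ll n^{-\tilde\nu(V)}$ and Theorem \ref{embed}(A) says $V$ does \emph{not} embed, a.a.s. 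The one genuine subtlety is that there are infinitely many combinatorial types of such discs, so a single application of Theorem \ref{embed} is not enough; as in the proof of Theorem \ref{thm1}, one needs the at-most-exponential-growth bound on the number of diagrams with a given number of edges (Brown's enumeration of triangulated discs, \cite{Brown}) together with the union bound $\sum_{E} (\#\{\text{discs with }e=E\})\cdot(n^{1/3+\epsilon/2}p)^{E}\to 0$, which converges because $n^{1/3+\epsilon/2}p\to 0$ beats the exponential count. The main obstacle, and the place where care is required, is precisely this quantitative passage: translating ``$f(V)>c_\epsilon^{-1}|\partial V|$'' into a uniform strict inequality $\tilde\nu(V)>1/3-\epsilon$ valid for \emph{all} competing filling diagrams after the clean/minimal normalizations, and checking that the normalizations do not lose generality; everything after that is the familiar first-moment argument.
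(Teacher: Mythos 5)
Your reduction breaks down at its very first step: you assert that if $I(Y)<c$ then a minimal simplicial filling of a prime null-homotopic loop is an \emph{embedded} disc $V\subset Y\subset X_\Gamma$. That is not true. For a prime loop the filling diagram $V$ is an abstract triangulated disc and the area $A_Y(\gamma)$ counts the triangles of $V$, but the filling is only a simplicial map $b:V\to Y$; it can (and in the relevant cases does) identify vertices and edges, so the image $b(V)$ is in general a complicated $2$-complex, not a disc, and $V$ itself need not occur as a subcomplex of $X_\Gamma$ at all (cf.\ Remark \ref{rmrk12}, where a minimal spanning disc is shown to be \emph{surjective} onto a tight complex $X$). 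Consequently, forbidding embedded ``area-inefficient'' discs via Theorem \ref{embed} plus Brown's enumeration and a first-moment union bound does not give any lower bound on $I(Y)$: the containment theorem controls which complexes embed, whereas the isoperimetric constant is governed by non-injective diagrams whose images are small subcomplexes covered with multiplicity. This is exactly the difficulty the paper's proof is built to handle: it first proves the purely deterministic Theorem \ref{uniform} (every pure complex with $\tilde\nu\ge 1/3+\epsilon$ satisfies $I\ge C_\epsilon$), whose proof analyzes the image of a minimal filling through the multiplicity filtration $X_i$, the decomposition $\Lambda^+\sqcup\Lambda^-$, the relative isoperimetric Lemma \ref{lnegs} and the boundary estimate of Lemma \ref{lpartial}, together with the classification of minimal cycles; and it then invokes Gromov's local-to-global principle (Theorem \ref{localtoglobal}) so that the only probabilistic input needed is that, a.a.s., every subcomplex of $X_\Gamma$ with a bounded number of faces has $\tilde\nu\ge 1/3+\epsilon$ --- a statement about \emph{finitely many} forbidden isomorphism types, for which Theorem \ref{embed} suffices and no exponential enumeration is needed.

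Two secondary problems would remain even if the embeddedness claim were repaired. First, your normalization that a minimal filling may be taken ``clean'' is unjustified (minimality of area does not let you fill in triangles of $X_\Gamma$ without changing the diagram's combinatorics, and cleanness plays no role in the actual argument). Second, your final bookkeeping has the inequality in the wrong direction: to apply Theorem \ref{embed}(A) with $p\ll n^{-1/3-\epsilon}$ you need an \emph{upper} bound $\tilde\nu(V)<1/3+\epsilon$ (which for genuinely inefficient discs does follow from $\nu(V)-\tfrac13=(v_\partial+3)/(3e)$), not the lower bound $\tilde\nu(V)>1/3-\epsilon$ you state; as written the conclusion $p\ll n^{-\tilde\nu(V)}$ does not follow. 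The essential missing idea, however, is the treatment of non-embedded Van Kampen diagrams, i.e.\ the content of Theorem \ref{uniform} and the local-to-global reduction.
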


The proof of Theorem \ref{hyp} is given in the Appendix at the end of the paper.

\section{Topology of minimal cycles with $\tilde \nu(Z)>1/3$}

We start with the following Lemma which describes 2-complexes $S$ with $b_2(S) =0$ and $\nu(S) >1/3$. 

\begin{lemma}\label{b20} Let $S$ be a closed strongly connected pure 2-complex with $b_2(S)=0$. If $\nu(S)>1/3$ then $S$, as a simplicial complex, is either a triangulated projective plane $P^2$ or a 
simplicial quotient $P'$ of a triangulated projective plane $P^2$ where two vertices of $P^2$ and two adjacent edges are identified, i.e. $v(P')=v(P^2)-1$, $e(P')=e(P^2)-1$, and $f(P')=f(P^2)$. 
\end{lemma}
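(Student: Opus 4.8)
The plan is to extract strong combinatorial constraints from the two hypotheses $b_2(S)=0$ and $\nu(S)>1/3$, and then run an argument parallel to the proof of Theorem \ref{thm2}: build up a disc inside $S$ by coning off links, and show that the only way for the growth process to terminate (as it must, since $S$ is finite and closed) is to close up into a projective plane or the described quotient thereof.

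\textbf{Setting up the numerics.} First I would work with $\Z_2$ coefficients throughout, as in the proof of Theorem \ref{thmbalanced}. Since $S$ is closed, $L(S)\le 0$, and formula (\ref{useful}) gives $\nu(S)=\tfrac13+\tfrac{3\chi(S)+L(S)}{3e(S)}$; the hypothesis $\nu(S)>1/3$ forces $3\chi(S)+L(S)>0$, hence $\chi(S)\ge 1$ (as $L(S)\le 0$ and $\chi$ is an integer, indeed $3\chi(S)>-L(S)\ge 0$). Now $b_2(S)=0$ together with strong connectivity and closedness should pin down $S$ as a $\Z_2$-homology surface or close to it; more carefully, $H_2(S;\Z_2)=0$ and $\chi(S)\ge 1$. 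Since $S$ is strongly connected and closed, one expects $S$ to be a pseudomanifold-like object; combined with $\chi(S)=2-b_1(S)+b_2(S)\ge 1$ (when $S$ happens to be a surface) this would force $\chi(S)=1$ or $2$, i.e.\ $S$ is a sphere or projective plane. But the sphere has $b_2\ne 0$, leaving $P^2$. The real work is to handle the non-manifold points: edges of degree $\ge 3$ and vertices whose link is not a single circle.

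\textbf{Ruling out bad local structure.} Here I would reuse the link-surgery machinery. For a vertex $v$, the link $\lk_S(v)$ is a graph with no univalent vertices (closedness), so it contains a simple cycle $C$, and $D=vC$ is a disc with one internal vertex, $\nu(D)=2/3>1/2$. I would then try to show that $S$ cannot contain the bad configurations $\mathcal S_1$, $\mathcal S_2$, external edges attached to $D$, scrolls, etc., \emph{because each of these would force $\nu$ of a subcomplex down} — but the relevant invariant now is $\nu(S)$ itself rather than $\tilde\nu$, so I would instead argue that presence of such a configuration forces $b_2(S)\ne 0$ or contradicts $\nu(S)>1/3$ via (\ref{useful}). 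Concretely: if some edge has degree $\ge 3$, or the disc $D$ has an external edge, I would build a subcomplex $Z\subseteq S$ that is itself closed-ish with $\nu(Z)$ too small, or exhibit a nontrivial $\Z_2$-2-cycle. Alternatively, and probably more cleanly, I would grow the disc $D=D_0\subset D_1\subset D_2\subset\cdots$ exactly as in the proof of Theorem \ref{thm2}, each step adjoining a cone $w_i\Gamma_i$ over a subgraph of a link; at each step either the disc grows, or the union closes up. Since $S$ is finite the process stops, and when it stops we have a closed subcomplex; I claim that closed subcomplex must be all of $S$ (using strong connectivity) and must be obtained from a disc with one internal vertex by identifying at most one pair of adjacent boundary edges — which is exactly $P^2$ (the standard triangulation, or the sphere, excluded by $b_2$) or the quotient $P'$ with $v(P')=v(P^2)-1$.

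\textbf{Main obstacle.} The hard part will be the bookkeeping that converts "the growth process terminates in a closed complex" into the precise combinatorial conclusion: I need that the terminal complex is not merely homeomorphic to $P^2$ but is \emph{simplicially} either the minimal triangulation pattern or its one-edge-collapse $P'$, with the stated vertex/edge/face counts. This requires showing that no extra internal vertices can accumulate (each would drop $\nu$ below, or at, $1/3$: a disc with $v_i$ internal vertices has $\nu=v/(2v+v_i-3)$, so closing such a disc into $P^2$ gives $\nu(S)=v/(2v+v_i-3)$-type estimates that exceed $1/3$ only for $v_i\le$ a small bound), and that at most one boundary identification occurs before closure — any more identifications would again push $\nu(S)\le 1/3$ or introduce a $\Z_2$-2-cycle contradicting $b_2(S)=0$. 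I would organize this as: (i) the underlying space of $S$ is a closed surface with $\chi\ge 1$, hence $P^2$; (ii) a clean count using (\ref{useful}) and $\nu(S)>1/3$ bounds $e(S)$-independent defects; (iii) enumerate the finitely many simplicial possibilities and discard all but the two claimed ones. Step (i) — justifying that $S$ is genuinely a surface and not just a $\Z_2$-homology-surface with pinch points — is where I expect to spend the most care, and the link-growth argument above is the tool for it.
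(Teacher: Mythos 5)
There is a genuine gap: your proposal is a plan whose decisive step is left open, and its main contradiction mechanism is set up with the wrong coefficients. You declare at the outset that you will work with $\Z_2$ coefficients throughout; but the hypothesis $b_2(S)=0$ in this lemma must be read over $\Z$ (or $\Q$), since the intended conclusion $S\cong P^2$ has $H_2(P^2;\Z_2)=\Z_2\neq 0$. Consequently your repeated device of ``exhibit a nontrivial $\Z_2$-2-cycle, contradicting $b_2(S)=0$'' proves nothing (a triangulated $P^2$ itself carries such a cycle while satisfying the hypotheses), and the formula $\chi(S)=2-b_1(S)+b_2(S)$ you invoke is also off for a connected complex (it should give $\chi(S)=1-b_1(S)$ once $b_2(S)=0$). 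A second structural problem: your strategy of ruling out \emph{every} edge of degree $\ge 3$ (``forces $b_2(S)\neq 0$ or contradicts $\nu(S)>1/3$'') is inconsistent with the statement you are proving, since the quotient $P'$ allowed in the conclusion has an edge of degree $4$ and satisfies both hypotheses; so that exclusion cannot succeed as stated. Finally, you yourself flag the ``main obstacle'' — showing that the terminal closed complex produced by your disc-growing process is a genuine surface or has exactly one pinch, with the precise vertex/edge/face counts — and this is exactly the content of the lemma; it is not carried out.

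For contrast, the paper's proof is much shorter and purely local. From $\nu(S)>1/3$, $b_2(S)=0$ and closedness, formula (\ref{useful}) with $\chi(S)=1-b_1(S)$ gives $3(1-b_1(S))+L(S)>0$ with $L(S)\le 0$, hence $b_1(S)=0$ and $L(S)\in\{0,-1,-2\}$. If $L(S)=0$ then $S$ is a pseudo-surface, hence (Corollary 2.1 of \cite{CF1}) a closed surface with $b_1=b_2=0$, i.e.\ $P^2$. The cases $L(S)=-1$ and ``two edges of degree $3$'' are killed by a parity argument: the link of a suitable vertex would be a graph with exactly one vertex of odd degree. The remaining case, a single edge of degree $4$, is resolved by the cutting argument of \cite{CF1} (Case C of Theorem 2.4): $S$ is obtained from a pseudo-surface $S'$ homotopy equivalent to $S$ by identifying two adjacent edges, and $S'$ must be $P^2$, giving the quotient $P'$. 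Your numerics get as far as $3\chi(S)+L(S)>0$, but without extracting $b_1(S)=0$ and the bound $L(S)\ge-2$ and then running such a case analysis (or some equivalent), the classification claimed in the lemma is not established.
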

\begin{proof}
Since $\chi(S)=1-b_1(S)$, using formula (\ref{useful}), we see that the assumption $\nu(S)>1/3$ implies that 
\begin{eqnarray}\label{le0}
3(1-b_1(S))+L(S)>0.
\end{eqnarray} In particular, we have 
$$L(S) \ge - 2 \quad \mbox{and} \quad b_1(S)=0. $$ Since $S$ is closed we have $L(S) \le 0$ and therefore there are 3 possibilities: $L(S)=0, -1, -2$. 

If $L(S)=0$ then each edge has degree 2 and $S$ is a pseudo-surface. Using Corollary 2.1 from \cite{CF1} we obtain that $S$ is a genuine triangulated surface without singularities and the only surface satisfying $b_1(S)=b_2(S)=0$ is the projective plane. 

The case $L(S)=-1$ is impossible. 
Indeed, if $L(S)=-1$ then there is a single edge of degree 3 and all other edges have degree 2. The link of a vertex incident to the edge 
of degree 3 will be a graph with all vertices of degree 2 and one vertex of degree 3 which is impossible. 

Assume now that $L(S)=-2$. There are two possibilities: (a) either there are two edges of degree 3 and all other edges have degree 2, or (b) there is a single edge 
of degree 4 and all other edges have degree 2. 

The possibility (a) cannot happen. Indeed, if $e, e'$ are two edges of degree 3 and $v$ is a vertex incident to $e$ but not to $e'$ then the link of $v$ is a graph with all vertices of degree 2 and one vertex of degree 3 which is impossible. 

Consider now the case (b). Let $e$ be the edge of degree 4 and let $v, w$ be the endpoints of $e$. 
Repeating the arguments of the proof of Theorem 2.4 from \cite{CF1} (see Case C in \cite{CF1}) we see that $S$ is obtained from a pseudo-surface $S'$ by identifying 
two adjacent edges. Since $S'$ and $S$ are homotopy equivalent, we obtain that $b_1(S')=b_2(S')=0$. Using Corollary 2.1 from \cite{CF1} and classification of surfaces we see that $S'$ is homeomorphic to the projective plane; therefore $S$ is isomorphic to a simplicial complex of type $P'$ as explained above.  
\end{proof}

\begin{corollary}\label{b201} Let $S$ be a connected 2-complex with $b_2(S)=0$. 
If $\tilde \nu(S)>1/3$ then $S$ is homotopy equivalent to a wedge of circles and projective planes. 
\end{corollary}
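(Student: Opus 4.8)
The plan is to reduce the statement to Lemma \ref{b20} by a standard decomposition of a 2-complex into its "interesting" pieces. First I would pass to a single connected component, and observe that since the statement is about homotopy type it suffices to treat $S$ up to homotopy equivalence; in particular I may collapse all free faces. After collapsing all free faces we are left with a complex $S_1$ whose pure part $P$ is closed (every edge of $P$ has degree $\ge 2$) together with a graph attached along it. Since homology is unchanged, $b_2(S_1)=b_2(S)=0$ and $\tilde\nu(S_1)\ge\tilde\nu(S)>1/3$ (adding free faces only decreases $\nu$, so $\tilde\nu$ can only have gone up under the collapse; more carefully, one checks $\tilde\nu$ is preserved or increased). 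The graph part contributes only a wedge of circles up to homotopy, so the whole question concentrates on the closed pure part $P$.

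Next I would decompose the closed pure 2-complex $P$ into its strongly connected components (maximal subcomplexes $Q$ with $Q-V(Q)$ connected). Each such $Q$ is a closed strongly connected pure 2-complex and is a subcomplex of $S$, hence $\nu(Q)\ge\tilde\nu(S)>1/3$, and $b_2(Q)=0$: indeed if some $Q$ had $b_2(Q)\ne 0$ then a nonzero $\Z_2$-cycle (or $\Z$-cycle) supported on $Q$ would give a nonzero class in $b_2(P)\le b_2(S_1)=0$, a contradiction, using that distinct strongly connected components meet only in their 1-skeleta so 2-cycles do not interact. Now Lemma \ref{b20} applies to each $Q$: it is either a triangulated $P^2$ or the quotient $P'$ of a triangulated $P^2$ identifying two adjacent edges, and in either case $Q$ is homotopy equivalent to $P^2$ (the quotient $P'$ collapses back to $P^2$ up to homotopy, or one checks directly $b_1=b_2=0$ over $\Z_2$ and $Q\simeq P^2$). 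Finally I would glue: $P$ is built from its strongly connected components by identifications along graphs (1-dimensional subcomplexes), and a complex assembled from copies of $P^2$ and graphs by gluing along graphs is homotopy equivalent to a wedge of those copies of $P^2$ and some circles — this is the standard fact that attaching along contractible-after-collapse, or at least aspherical 1-dimensional, subcomplexes converts a union into a wedge up to homotopy, with the extra circles recording $b_1$ of the "graph of components." Reassembling with the graph part discarded in the first step gives that $S\simeq\bigvee S^1 \vee \bigvee P^2$.

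The main obstacle I expect is the bookkeeping in the decomposition step: one must check carefully that each strongly connected component $Q$ genuinely satisfies the hypotheses of Lemma \ref{b20}, namely that $\nu(Q)>1/3$ (immediate from $\tilde\nu(S)>1/3$ since $Q\subset S$) but more delicately that $b_2(Q)=0$ — this requires knowing that the $\Z_2$-homology of the closed pure part splits over strongly connected components, i.e. that a minimal $2$-cycle is supported in a single strongly connected component, which is where "closed" and the structure of links get used. A secondary subtlety is justifying that passing from $S$ to its pure-part-after-collapse does not destroy the inequality $\tilde\nu>1/3$ for the relevant subcomplexes; since we only ever apply Lemma \ref{b20} to subcomplexes $Q$ of the original $S$, and $\nu(Q)\ge\tilde\nu(S)$ directly, this is in fact automatic and the collapsing is only needed to identify the homotopy type, so no inequality is lost. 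The gluing-into-a-wedge step is routine homotopy theory (iterated use of the fact that if $A\hookrightarrow X$ is a cofibration with $A$ a graph, then $X$ is homotopy equivalent to $X/A$ wedge a wedge of circles indexed by a spanning-tree complement of $A$) and I would state it without belaboring the details.
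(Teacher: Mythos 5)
Your proposal is correct and takes what is surely the intended route: the paper states the corollary right after Lemma~\ref{b20} without spelling out the reduction, and you supply exactly that reduction — collapse free faces (a homotopy equivalence whose result has a closed pure part), decompose the closed pure part into strongly connected components, apply Lemma~\ref{b20} to each, and reassemble. The bookkeeping is fine: $b_2(Q)=0$ follows because $H_2(Q)\hookrightarrow H_2(S)$ is automatic for a subcomplex of a 2-complex (cycles in top degree), and $\nu(Q)\ge\tilde\nu(S)>1/3$ is immediate. Your remark that the quotient $P'$ of Lemma~\ref{b20} is still homotopy equivalent to $P^2$ is also right: the identified pair of adjacent edges is a contractible arc whose image is again a contractible arc, and collapsing each gives the same space.

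One point in the reassembly step is worth tightening. Distinct strongly connected components of the closed pure part in fact meet only in \emph{vertices}, not in arbitrary 1-dimensional subcomplexes: if two components shared an edge $e$, any two faces of $e$ would lie in a common strongly connected component, contradicting distinctness; likewise the residual graph meets the pure part only in vertices. So the pieces are glued along finite point sets, and the wedge decomposition follows by the elementary iteration of contracting an arc between two gluing points and picking up one circle each time. Your blanket statement that gluing along "aspherical $1$-dimensional subcomplexes converts a union into a wedge" is not true in general — $S^2=D^2\cup_{S^1}D^2$ is a gluing along an aspherical $1$-complex that is not a wedge — but this does not affect your proof since the gluing loci here really are $0$-dimensional.
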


\begin{definition}
A finite pure 2-complex $Z$ is said to be a minimal cycle if $b_2(Z)=1$ and for any proper subcomplex $Z'\subset Z$ one has $b_2(Z')=0$. 
\end{definition}

Any minimal cycle is closed and strongly connected. 

\begin{example} { \rm Let $Z$ be the union of two subcomplexes $Z=A\cup B$ where each $A$ and $B$ is a triangulated projective plane and the intersection 
$C=A\cap B$ is a circle which is not null-homotopic in both $A$ and $B$. }
\end{example}

\begin{definition}
A minimal cycle $Z$ is said to be of type A if it has no proper closed
$2$-dimensional subcomplexes. 
If $Z$ contains a proper closed $2$-dimensional subcomplex then we say that $Z$ is a minimal cycle of type B.
\end{definition}

\begin{lemma}\label{nonasph}
Let $Z$ be a minimal cycle of type A satisfying $\tilde \nu(Z) >1/3$. 
Then $Z$ is homotopy equivalent either to $S^2$ or to the wedge $S^2\vee S^1$. Moreover, for any face  $\sigma \subset Z$ the boundary $\partial \sigma$ is null-homotopic in $Z-\int(\sigma)$. 
\end{lemma}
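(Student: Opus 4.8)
The plan is to exploit the fact that $Z$ is a \emph{minimal} cycle of type A to severely restrict its combinatorial structure, and then combine this with the hypothesis $\tilde\nu(Z)>1/3$. First I would record that every proper subcomplex $Z'\subsetneq Z$ has $b_2(Z')=0$; since $Z$ is pure, strongly connected and closed, I would apply Lemma \ref{b201} to each proper subcomplex. In particular, for any face $\sigma\subset Z$ the complement $Z-\mathrm{int}(\sigma)$ is a proper subcomplex with $b_2=0$, so by Corollary \ref{b201} it is homotopy equivalent to a wedge of circles and projective planes; the ``type A'' hypothesis (no proper closed $2$-dimensional subcomplex) should be used to rule out the projective-plane summands, forcing $Z-\mathrm{int}(\sigma)$ to be homotopy equivalent to a wedge of circles, i.e. to have a free fundamental group and vanishing $H_2$. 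This immediately gives the last sentence of the statement: $\partial\sigma$ bounds in $Z-\mathrm{int}(\sigma)$ because $H_1(Z-\mathrm{int}(\sigma))$ is free and the class of $\partial\sigma$ must die — more precisely, from the Mayer--Vietoris / long exact sequence of the pair $(Z,Z-\mathrm{int}(\sigma))$ together with $b_2(Z)=1$ and $b_2(Z-\mathrm{int}(\sigma))=0$, the attaching map of $\sigma$ carries a generator of $H_2(Z)$, and one checks $[\partial\sigma]=0$ in $H_1(Z-\mathrm{int}(\sigma))$; since that group is free and $Z-\mathrm{int}(\sigma)$ is homotopy equivalent to a graph, vanishing in $H_1$ forces null-homotopy.

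Next I would pin down the homotopy type of $Z$ itself. Pick any face $\sigma$; then $Z$ is obtained from the graph-homotopy-equivalent complex $Z-\mathrm{int}(\sigma)$ by attaching the single $2$-cell $\sigma$ along $\partial\sigma$. By the previous paragraph $\partial\sigma$ is null-homotopic in $Z-\mathrm{int}(\sigma)$, so up to homotopy we are coning off a null-homotopic loop in a wedge of circles, which produces a wedge of circles with an $S^2$ wedged on: $Z\simeq S^2\vee(\bigvee S^1)$. It remains to show there is at most one circle, i.e. $b_1(Z)\le 1$ — wait, the statement allows $S^2$ or $S^2\vee S^1$, so I must show $b_1(Z)\le 1$. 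Here is where $\tilde\nu(Z)>1/3$ enters quantitatively: by formula (\ref{useful}) applied to $Z$ (closed, so $L(Z)\le 0$), $\nu(Z)>1/3$ forces $3\chi(Z)+L(Z)>0$, hence $\chi(Z)\ge 1$; since $Z\simeq S^2\vee(\bigvee_r S^1)$ has $\chi(Z)=2-r$, we get $r\le 1$, as required. (One should double-check that $\tilde\nu(Z)>1/3$ is used, not merely $\nu(Z)>1/3$: the former is needed in the first paragraph when applying Corollary \ref{b201}, which requires $\tilde\nu>1/3$ on all of $Z$, and for invoking Lemma \ref{b20}-type control on the proper subcomplexes $Z-\mathrm{int}(\sigma)$, which inherit $\tilde\nu>1/3$.)

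I expect the main obstacle to be the first paragraph: correctly extracting from the ``type A'' hypothesis that $Z-\mathrm{int}(\sigma)$ has no projective-plane summands. The subtlety is that Corollary \ref{b201} only asserts a homotopy equivalence to a wedge of circles and projective planes, whereas ``type A'' is a statement about \emph{subcomplexes}, not homotopy summands; so I would need to argue that a projective-plane homotopy summand of the complement forces a closed $2$-dimensional subcomplex. The cleanest route is probably to apply Lemma \ref{b20} structurally rather than just up to homotopy: if $Z-\mathrm{int}(\sigma)$ (or one of its strongly connected pure closed pieces) were, as a simplicial complex, a triangulated $P^2$ or the quotient $P'$, that piece would be a proper closed $2$-dimensional subcomplex of $Z$, contradicting type A; one then argues that once all such ``$P^2$-like'' pieces are excluded, the remaining complement collapses to a graph. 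Care is also needed because $Z-\mathrm{int}(\sigma)$ need not be pure, so Lemma \ref{b20} must be applied to its pure part and the non-pure ``whiskers'' handled separately — but those only contribute free $\pi_1$, so they do no harm.
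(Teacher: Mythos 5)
The one step that genuinely fails as written is the final inference of your first paragraph: from $[\partial\sigma]=0$ in $H_1(Z-\int(\sigma))$ and the fact that $Z-\int(\sigma)$ is homotopy equivalent to a graph you conclude that $\partial\sigma$ is null-homotopic there. For a wedge of two or more circles the fundamental group is a nonabelian free group, and homologically trivial loops (commutators) need not be null-homotopic, so ``vanishing in $H_1$ forces null-homotopy'' is not a valid principle for graphs. At the point where you invoke it you have not yet bounded the number of circle summands of the complement --- you bound $b_1(Z)$ only at the very end, and that bound is itself derived from the null-homotopy claim, so the argument as ordered is circular. The missing ingredient is exactly the Euler-characteristic estimate you already use, applied to the complement first: $\nu(Z)>1/3$ and $L(Z)\le 0$ give $3\chi(Z)+L(Z)>0$, hence $\chi(Z)\ge 1$, hence $\chi(Z-\int(\sigma))=\chi(Z)-1\ge 0$; together with connectedness and $b_2(Z-\int(\sigma))=0$ this yields $b_1(Z-\int(\sigma))\le 1$. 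Once the complement is contractible or homotopy equivalent to a single circle, its fundamental group is abelian and your homological argument (or, more directly, the observation that an attaching map of nonzero degree onto $S^1$ would force $b_2(Z)=0$, contradicting $b_2(Z)=1$) does give the null-homotopy, and your second paragraph then goes through. This reordering is precisely the paper's proof: collapse $Z-\int(\sigma)$ to a connected graph $\Gamma$ (possible because type A forbids closed $2$-dimensional proper subcomplexes, so the collapsing never gets stuck), show $b_1(\Gamma)\le 1$ from $\chi(Z)\ge 1$, and then read off both the homotopy type of $Z$ and the triviality of $\partial\sigma$ in $\Gamma$.

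A smaller remark: the detour through Corollary \ref{b201} and Lemma \ref{b20} is unnecessary and is the source of the homotopy-summand-versus-subcomplex worry you flag at the end. The direct collapsing argument sketched in your last paragraph already does everything: if collapsing $Z-\int(\sigma)$ across free edges ever stops before reaching a graph, the pure part of what remains is a nonempty closed $2$-complex missing $\sigma$, i.e.\ a proper closed $2$-dimensional subcomplex of $Z$, contradicting type A --- no identification of that piece as a projective plane is needed. So the complement collapses to a graph outright, and Corollary \ref{b201} plays no role.
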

\begin{proof} Let $\sigma\subset Z$ be an arbitrary face. 
Starting with the complex $Z-\int(\sigma)$ and collapsing subsequently faces across the free edges we shall arrive to a connected graph $\Gamma$ (due to our assumption about the absence of closed subcomplexes). Let us show that $b_1(\Gamma)\le 1$. The inequality $\nu(Z)>1/3$ is equivalent to $3\chi(Z)+L(Z) >0$ (see formula  (\ref{useful})) where $L(Z) \le 0$ (since $Z$ is closed) and
hence $\chi(Z)\ge 1$. Therefore $\chi(\Gamma) = \chi(Z)-1 \ge 0$ which implies $b_1(\Gamma) \le 1$. Hence, $\Gamma$ is either contractible or it is homotopy equivalent to the circle. In the first case, $Z$ is homotopy equivalent to $S^2$. In the second case, $Z$ is homotopy equivalent to the result of attaching a 2-cell to the circle,
$S^1\cup_f e^2$. Since $b_2(Z)=1$ we obtain that $\deg(f)=0$, and hence $Z$ is homotopy equivalent to $S^1\vee S^2$. We see that the inclusion 
$\partial \sigma\to Z-\int(\sigma)\simeq \Gamma$ 
is homotopically trivial in both cases. 
\end{proof}
%

\begin{lemma}\label{lmB1} Let $Z$ be a minimal cycle of type B such that $\tilde \nu(Z)>1/3$. Suppose that any edge $e$ of $Z$ has degree $\le 3$. Then $Z$ is isomorphic (as a simplicial complex) to 
the union $P^2\cup D^2$, where $P^2$ and $D^2$ are triangulated projective plane and the disc, $P^2\cap D^2= \partial D^2=P^1 \subset P^2$, and the 
loop $\partial D^2$ has either $3, 4$ or $5$ edges. Here $P^1\subset P^2$ denotes a simple homotopically nontrivial simplicial loop on the projective plane. 
In particular, $Z$ is homotopy equivalent to $S^2$ and for any face  $\sigma \subset P^2\subset Z$ the boundary $\partial \sigma$ 
is null-homotopic in $Z-\int(\sigma)$. 
\end{lemma}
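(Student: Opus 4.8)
The plan is to find a triangulated projective plane $W\subset Z$, show that the closure $Y$ of its complement is a triangulated disc meeting $W$ along a one-sided simple closed curve, and then bound the length of that curve. To extract $W$: since $Z$ is of type B it contains a proper closed $2$-dimensional subcomplex, and I would take one, $W\subsetneq Z$, minimal with these properties. Minimality forces $W$ to be pure (its pure part is again a proper closed $2$-dimensional subcomplex) and strongly connected (a strongly connected component of $W$ contains every face sharing an edge with one of its own, so passing to it changes no edge degree and again gives a proper closed $2$-dimensional subcomplex). As $Z$ is a minimal cycle, $b_2(W)=0$, and $\tilde\nu(Z)>1/3$ gives $\nu(W)>1/3$; so Lemma~\ref{b20} shows $W$ is a triangulated $P^2$ or a quotient $P'$. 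In $P'$ the edge identifying two degree-$2$ edges of a surface has degree $4$, contradicting $\deg e\le 3$; hence $W$ is a triangulated projective plane.

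Next I would set up the complement. Let $Y$ be the subcomplex generated by the faces of $Z$ not in $W$ and $A=W\cap Y$. No face lies in $A$, so $\dim A\le 1$; and since $W$ is a closed surface and $\deg e\le 3$, every edge of $A$ has degree $2$ in $W$, degree $1$ in $Y$, degree $3$ in $Z$, so $\partial Y=A$. Writing $t$ for the number of degree-$3$ edges of $Z$, formula~(\ref{useful}) reads $3\chi(Z)-t>0$ (as $L(Z)=-t$), while $\chi(Z)=2-b_1(Z)\le 2$ since $Z$ is connected with $b_2(Z)=1$. The rational Mayer--Vietoris sequence of $Z=W\cup_A Y$, using $H_2(W)=H_1(W)=0$ and $H_2(Y)=0$, injects $H_2(Z)\cong\Q$ into $H_1(A)$; hence $A$ carries a simplicial cycle, $e(A)\ge 3$, and with $e(A)\le t<3\chi(Z)\le 6$ this forces $\chi(Z)=2$, so $b_1(Z)=0$, $t\le 5$, and at most $t-e(A)\le 2$ degree-$3$ edges of $Z$ lie in $Y\setminus A$. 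With $H_1(Z)=0$ the sequence becomes $0\to\Q\to H_1(A)\to H_1(Y)\to 0$ and $\tilde H_0(A)\cong\tilde H_0(Y)$.

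The heart of the proof is that $Y$ is a disc. First, $Y$ contains no closed $2$-dimensional subcomplex: by the first step such a subcomplex would contain a copy $W'\cong P^2$ inside $Y$; its edges have degree $\ge 2$ in $W'$ while the edges of $A$ have degree $1$ in $Y$, so $W'$ shares no edge with $W$ and at most two of its edges are degree-$3$ edges of $Z$. But the generator $z$ of $H_2(Z;\Q)$ has full support (the subcomplex spanned by the faces occurring in $z$ carries a non-zero $2$-cycle, hence is all of $Z$ by minimality), and along every degree-$2$ edge the two coefficients of $z$ agree up to sign, so $z$ restricted to the faces of $W'$ is $\pm d$-valued for one $d\ne 0$ and yields an orientation of $W'\cong P^2$ coherent across all but at most two edges --- impossible, as $P^2$ is non-orientable. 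Hence collapsing $Y$ along its free (degree-$1$) edges leaves a graph, so $Y$ is homotopy equivalent to that graph; a parity count (at each vertex the number of boundary edges is congruent mod $2$ to the number of interior degree-$3$ edges there, at most two of the latter overall) together with $e(A)\le 5$ excludes $b_1(A)\ge 2$, and a further Mayer--Vietoris argument (splitting off a contractible component of $Y$ would leave a proper subcomplex with $b_2=1$) shows $Y$ and $A$ are connected, so $b_1(A)=1$, $b_1(Y)=0$, $Y$ collapses to a point, and $A$ is one simplicial circle. Feeding $\partial Y=A$, contractibility of $Y$, and the $\le 2$ interior degree-$3$ edges into the surface-with-boundary analysis of \cite{CF1} (branch edges as in Case C there, the degenerate ``disc-with-identifications'' alternatives being excluded by $\tilde\nu(Z)>1/3$) identifies $Y$ as a triangulated disc $D^2$ with $\partial D^2=A$; since $A$ consists of degree-$3$ edges, $\partial D^2$ has $3$, $4$ or $5$ edges.

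To finish: $A=\partial D^2$ is a simple closed curve on $W=P^2$, and if it were null-homotopic there then $Z\simeq P^2\vee S^2$, so a triangulated $S^2$ would be a proper subcomplex of $Z$ with non-zero second rational Betti number, against minimality; hence $A$ is a homotopically non-trivial simple loop $P^1\subset P^2$ and $Z=P^2\cup_{P^1}D^2$. By van Kampen $\pi_1(Z)=\Z_2/\langle\langle[A]\rangle\rangle=1$, and $\chi(Z)=2$ gives $Z\simeq S^2$. For $\sigma\subset P^2\subset Z$ the space $W-\int(\sigma)$ is a M\"obius band in which $A$, being one-sided and simple, is isotopic to the core; attaching $Y=D^2$ along $A$ produces a $Z-\int(\sigma)$ deformation retracting onto a disc, hence contractible, so $\partial\sigma$ is null-homotopic in $Z-\int(\sigma)$.

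The main obstacle is precisely the disc-ness of $Y$: showing it is a genuine triangulated disc with at most five boundary edges, not merely a contractible $2$-complex. This is where $\deg e\le 3$ is used decisively a second time --- blocking a second projective plane inside $Y$ through the full-support/orientation argument, and forcing $b_1(A)=1$ through $t\le 5$ and the parity count --- after which the classification results of \cite{CF1} for complexes with boundary complete the identification.
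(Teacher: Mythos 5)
Your overall strategy mirrors the paper's: locate a proper closed subcomplex $W\cong P^2$ via Lemma~\ref{b20}, show the closure of its complement is a disc glued along a short essential loop, and finish with van Kampen. Your extraction of $W$, the rational Mayer--Vietoris accounting, and the final step (non-nullhomotopy of $A$ in $W$, Euler characteristic, the M\"obius-band argument for $\partial\sigma$) are all sound and close to the paper's. But the middle of your argument departs from the paper's and leaves a real gap at the decisive point.

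The paper does not work with $A=W\cap Y$ directly. It introduces $\Gamma$, the subgraph of \emph{all} degree-$3$ edges of $Z$, observes $L(Z)=-e(\Gamma)$, and uses $\tilde\nu(Z)>1/3$ to get $3\le e(\Gamma)\le 5$; then a short link-parity argument (no free vertex of $\Gamma$; ruling out the square-with-diagonal) shows $\Gamma$ is a single cycle, which forces $G=\Gamma$. Only after this does the paper conclude that every interior edge of $Z''=\overline{Z-Z'}$ has degree $2$, so $Z''$ is a pseudo-surface with boundary the cycle $G$, and $\chi(Z'')=1$ then makes it a disc. That is, ``$\Gamma$ is exactly the cycle $G$'' is precisely what makes the complement a pseudo-surface, and the disc identification is then immediate.

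You instead show $A$ is a circle and $Y$ is contractible but allow up to two degree-$3$ edges in $Y\setminus A$. At that point $Y$ is \emph{not} known to be a pseudo-surface, and ``contractible pure $2$-complex with one boundary circle and $\le 2$ interior degree-$3$ edges'' is not the hypothesis of any classification result you can cite: Case~C of \cite{CF1} resolves a single edge of degree $4$, not interior degree-$3$ edges. So the sentence ``feeding $\partial Y=A$, contractibility of $Y$, and the $\le 2$ interior degree-$3$ edges into the surface-with-boundary analysis of \cite{CF1} \dots identifies $Y$ as a triangulated disc'' is the genuine gap: nothing you have proved rules out a contractible $Y$ with a branch edge, and such a $Y$ is not a disc. (Relatedly, your claim that $A$ is a genuine circle, rather than a circle with pendant trees, and the full-support/orientation argument to exclude a second $P^2$ are both stated more briefly than they can be justified --- the latter is repairable by observing that $\partial(z|_{W'})$ is a $1$-cycle supported on $\le 2$ edges and hence zero, but you should say this rather than appeal to ``orientability.'')

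The repair is to prove what the paper proves: that $\Gamma$ has no free vertex. If $v$ is incident to exactly one degree-$3$ edge of $Z$, then $\lk_Z(v)$ has exactly one odd-degree vertex, which is impossible. Combined with $3\le e(\Gamma)\le 5$ and $b_1(\Gamma)\ge 1$ (your Mayer--Vietoris step already gives this), and the square-with-diagonal exclusion, this forces $\Gamma$ to be a cycle, hence $\Gamma=A$, hence there are \emph{no} interior degree-$3$ edges in $Y$. Then $Y$ is a pseudo-surface with boundary a circle and $\chi(Y)=1$, so $Y$ is a disc, and the rest of your argument goes through. Without this step the proof does not close.
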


\begin{proof}
Let $Z'$ be a strongly connected proper closed 2-dimensional subcomplex of $Z$.
Since any edge of $Z'$ has degree $\leq 3$ in $Z'$, it follows from Lemma \ref{b20} that $Z'$ is homeomorphic to $P^2$.

 Denote $Z''=\overline{(Z-Z')}$ and let $G$ be the graph $G=Z'\cap Z''$. Let $\Gamma$ be the subgraph of the 1-skeleton $Z^{(1)}$ of $Z$ formed by the edges of degree $3$ in $Z$. Clearly $G\subset \Gamma$. By definition of $\Gamma$ and the assumptions of the Lemma, any edge of $\Gamma$ has degree $3$ in $Z$ and 
every edge of $Z$ which is not in $\Gamma$ must have degree $2$ in $Z$. In particular one has that $L(Z)=-e(\Gamma)$.

The graph $G$ (and therefore $\Gamma$) must contain a cycle, since otherwise $Z$ is homotopy equivalent to $Z'\vee Z''$ and thus $b_2(Z'')=1$, contradicting the minimality of $Z$. In particular $e(G)\geq 3$. Moreover, $\Gamma$ has at most 5 edges since $\tilde\nu(Z) > 1/3$ implies $L(Z)\geq -5$ (using formula (\ref{useful}) and $L(Z)=-e(\Gamma)\leq -e(G)$). 
Hence $\Gamma$ either contains exactly one cycle (of length 3, 4 or 5) or $\Gamma$ is a square with one diagonal. 
\begin{figure}[h]
\centering
\includegraphics[width=0.19\textwidth]{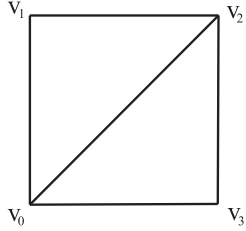}
\caption{Graph $\Gamma$.}
\end{figure}

Let us show that the latter case is impossible. Indeed, suppose that $\Gamma$ is a square with one diagonal. 
Let $v_0$ be one of the vertices of degree $3$ in $\Gamma$. Then $v_0$ is incident to exactly three odd degree edges in $Z$ 
(corresponding to the three neighbours $v_1, v_2, v_3$ of $v_0$ in $\Gamma$). In particular the link $\lk_Z(v_0)$ would have an odd number of odd degree vertices which is impossible. We conclude that $b_1(\Gamma)=b_1(G)=1$.

We now show that $\Gamma$ is a cycle and therefore $G=\Gamma$. Suppose that $\Gamma$ contains an edge $e$ with a free vertex $v$. Then the link $\lk_Z(v)$ is a graph with exactly one vertex of degree $3$ and all other vertices of degree $2$. This contradicts the fact that every graph has an even number of odd degree vertices.

We have shown that $G=\Gamma$ is a cycle of length 3, 4 or 5 and that all edges of $G$ have degree $3$ in $Z$ and all edges of $Z$ which are not in $G$ have degree $2$. Recall that $Z=Z'\cup_G Z''$ where $Z'$ is a triangulated projective plane. Since for any edge $e\in G$, one has $deg_{Z''}(e)=deg_Z(e)-deg_{Z'}(e)=1$ it follows that 
$Z''$ is a pseudo-surface with boundary. Moreover, since $\chi(G)=0$ and $\chi(Z')=1$ we obtain $2=\chi(Z)=\chi(Z')+\chi(Z'')$, i.e. $\chi(Z'')=1$. Hence $Z''$ is a disk. Besides, $G=Z'\cap Z''$ is not null-homotopic in $Z'$ since otherwise $G$ bounds a disc $A^2\subset Z'$ and $b_2(Z)=b_2(Z')+b_2(Z''\cup A)$ implying 
$b_2(Z''\cup A)=1$ which would contradict the minimality of $Z$. 
Hence we see that $Z$ is homotopy equivalent to $S^2$ and 
any 2-simplex $\sigma \subset Z'$ has the required property.

\end{proof}

\begin{lemma}\label{lmB2}
Let $Z$ be a minimal cycle of type B such that $\tilde \nu(Z)>1/3$ and such that an edge $e$ of $Z$ has degree $\ge 4$.
Then $Z$ is isomorphic (as a simplicial complex) to the quotient $q: \hat Z=P^2\cup D^2 \to Z$ of a minimal cycle $\tilde Z$ of type B with $\tilde \nu(\hat Z)>1/3$ and such that all edges of $\tilde Z$ have degree $\le 3$ (as described in the previous Lemma); the map $q$ identifies two vertices and two adjacent edges. In particular, $Z$ is homotopy equivalent to $S^2$ and for any face 
$\sigma \subset q(P^2)\subset Z$ the boundary $\partial \sigma$ 
is contractible in $Z-\int(\sigma)$. 
\end{lemma}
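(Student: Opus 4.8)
The plan is to reduce the degree-$\ge 4$ case to the degree-$\le 3$ case handled by Lemma \ref{lmB1} by ``unfolding'' the high-degree edge, exactly in the spirit of Case C in the proof of Theorem 2.4 of \cite{CF1}, which was already invoked in the proof of Lemma \ref{b20}. First I would observe that the hypothesis $\tilde\nu(Z)>1/3$ together with formula (\ref{useful}) forces $3\chi(Z)+L(Z)>0$ with $L(Z)\le 0$, so $\chi(Z)\ge 1$, and since $b_2(Z)=1$ this gives $b_1(Z)=0$ and $L(Z)\ge -5$. As $L(Z)=\sum_e[2-\deg(e)]$ and one edge $e_0$ has degree $\ge 4$, the only possibility is $\deg(e_0)=4$ (contributing $-2$) with all remaining edges of degree $2$ except possibly a bounded deficit, and in fact a careful count shows every other edge has degree $2$: if some other edge also had degree $3$ or $4$ a link-parity argument (a vertex incident to exactly one odd-degree edge has a link with an odd number of odd-degree vertices, which is impossible, just as in Lemmas \ref{b20} and \ref{lmB1}) rules it out unless that edge shares both endpoints with $e_0$.

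Next I would perform the local unfolding at $e_0$: write $v,w$ for its endpoints, and note that $\lk_Z(e_0)$ is four points, so the four faces containing $e_0$ organize the star of $e_0$ into a ``book with four pages.'' Following Case C of \cite{CF1}, I would split $e_0$ into two edges $e_0', e_0''$ (and split $v$ into $v',v''$, and $w$ into $w',w''$ if necessary), distributing the four faces two-and-two so that the resulting complex $\hat Z$ is again a pure 2-complex in which the new edges have degree $2$. The key point to verify is that this unfolding can be carried out so that $\hat Z$ is \emph{connected} and that $q\colon \hat Z\to Z$ is the quotient identifying two vertices and two adjacent edges, with $b_2(\hat Z)=1$; since $q$ is a homotopy equivalence (collapsing a pair of identified adjacent edges) we get $b_1(\hat Z)=0$, $\chi(\hat Z)=\chi(Z)$, and $e(\hat Z)=e(Z)+1$, $v(\hat Z)=v(Z)+1$, whence $\nu$ is unchanged on $\hat Z$ and more generally $\tilde\nu(\hat Z)\ge\tilde\nu(Z)>1/3$ because every subcomplex of $Z$ pulls back to a subcomplex of $\hat Z$ with the same or larger $\nu$. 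Then $\hat Z$ is a minimal cycle of type B with all edges of degree $\le 3$, so Lemma \ref{lmB1} applies and gives $\hat Z\cong P^2\cup D^2$ with $\hat Z\simeq S^2$.

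Finally, I would transport the conclusion back along $q$: since $q$ is a homotopy equivalence, $Z\simeq \hat Z\simeq S^2$. For the nullhomotopy statement, take a face $\sigma\subset q(P^2)\subset Z$; it lifts to a face $\hat\sigma\subset P^2\subset\hat Z$ with $q|_{\hat\sigma}$ a homeomorphism, and by Lemma \ref{lmB1} the boundary $\partial\hat\sigma$ is nullhomotopic in $\hat Z-\int(\hat\sigma)$. Applying $q$ and using that $q$ restricts to a map $\hat Z-\int(\hat\sigma)\to Z-\int(\sigma)$ (the identified edges are not in $\int(\sigma)$), we conclude $\partial\sigma$ is contractible in $Z-\int(\sigma)$, as claimed.

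The main obstacle I anticipate is the bookkeeping in the unfolding step: one must check that the degree-$4$ edge can indeed be opened into exactly \emph{one} vertex-identification-with-two-adjacent-edges (rather than something more complicated, e.g. a disconnection, or needing to split both endpoints independently), and that this is forced by $\tilde\nu(Z)>1/3$ ruling out further high-degree edges. This is precisely the content of the Case C analysis in \cite{CF1}, so the cleanest route is to cite that argument verbatim and only verify the two numerical facts ($\chi(Z)\ge 1$ and the degree constraints) that make it applicable here.
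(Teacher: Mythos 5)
Your high-level strategy — unfold the degree-$\ge 4$ edge to produce a complex $\hat Z$ satisfying the hypotheses of Lemma \ref{lmB1}, then transport the conclusion back along the quotient map $q$ — is exactly the paper's. The $\nu$-preservation argument and the final transport of the nullhomotopy are also fine. However, your degree analysis contains a genuine gap that would derail the unfolding step.

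You assert that the high-degree edge $e_0$ must have degree exactly $4$ and that ``every other edge has degree $2$.'' Both claims are false under the hypotheses. Write $\Gamma$ for the subgraph of edges of degree $\ge 3$ in $Z$. Since $Z$ is a type-B minimal cycle, the intersection of its two closed pieces is a cycle along which all edges have degree exactly $3$, so $\Gamma$ necessarily contains a cycle and $e(\Gamma)\ge 3$; this is not ruled out by your link-parity argument, and those degree-$3$ edges certainly do not all ``share both endpoints with $e_0$.'' On the other hand $L(Z)\ge -5$ together with at least one edge of degree $\ge 4$ forces $e(\Gamma)\le 4$. The two surviving configurations are: (i) $e(\Gamma)=3$, in which case $\Gamma$ is a $3$-cycle with two edges of degree $3$ and \emph{one edge of degree $5$} (so $L(Z)=-5$), and (ii) $e(\Gamma)=4$, with a $3$-cycle of degree-$3$ edges plus one degree-$4$ edge hanging off it ($L(Z)=-5$ again). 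Your unfolding recipe — ``four faces two-and-two'' — is tailored to the degree-$4$ case only, and never addresses the degree-$5$ case, where the link of a vertex of $e_0$ has one vertex of degree $3$ and one of degree $5$ (not a wedge of two circles), and the correct split produces one edge of degree $3$ and one of degree $2$, not two edges of degree $2$. Without a separate analysis of this case the argument does not close. You would also need to argue, as the paper does via the parity of odd-degree vertices in links, that no \emph{other} configuration of $\Gamma$ (e.g.\ $\Gamma$ with a free end, or $\Gamma$ a square with a diagonal) can occur, rather than asserting the conclusion outright.
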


\begin{proof}
Let $\Gamma$ be the subgraph of the 1-skeleton of $Z$ which is the union of the edges of degree $\geq 3$. As in the proof of the previous lemma, the inequality 
$\tilde \nu(Z)>1/3$ implies $L(Z)\ge -5$ and using our 
assumption that at least one edge of $\Gamma$ has degree $\geq 4$ we obtain 
$-5\leq L(Z)\leq-e(\Gamma)-1$, i.e. $\Gamma$ has at most 4 edges.  
On the other hand $e(\Gamma)\geq 3$ since $\Gamma$ must contain a cycle as follows from the argument used in the proof of Lemma \ref{lmB1}. 
Thus we have consider the cases $e(\Gamma)$ equals 3 or 4. 


Define $\Gamma_{\rm odd}$ to be the subgraph of $\Gamma$ formed by the edges of odd degree in $Z$. The graph $\Gamma_{\rm odd}$ is non-empty; 
indeed, since $e(\Gamma)\geq 3$ and every edge of $\Gamma$ with even degree must have degree $\geq 4$, the assumption 
$\Gamma_{\rm odd}=\emptyset$ would imply $L(Z)\leq -2e(\Gamma)\leq -6$ contradicting $L(Z)\ge -5$. 
Furthermore the graph $\Gamma_{\rm odd}$ may not have a free vertex. 
If $\Gamma_{\rm odd}$ contained an edge $e$ with a free vertex $v$ then the link $\lk_Z(v)$ would be graph with exactly one vertex of odd degree contradicting the fact that every graph has an even number of odd degree vertices. We obtain in particular that $e(\Gamma_{\rm odd})\geq 3$ and $b_1(\Gamma_{\rm odd})\geq 1$.

We can now describe the graph $\Gamma$. 

If $e(\Gamma)=3$, then all edges of $\Gamma$ must have odd degree in $Z$, i.e. $\Gamma =\Gamma_{\rm odd}$. Furthermore, since $L(Z)\geq -5$ and $Z$ has at least one edge of degree $>3$, it follows that $\Gamma$ is a cycle formed by two edges of degree 3 and one edge of degree 5. In particular, $L(Z)=-5$. 
Denote the edge of degree 5 by $e$. Let $v$ be a vertex of $e$. Then the link $\lk_Z(v)$ is a graph with exactly two vertices of odd degree. One of these vertices has 
degree 3 in the link $\lk_Z(v)$ and the other vertex has degree 5. 
\begin{figure}[h]
\centering
\includegraphics[width=0.19\textwidth]{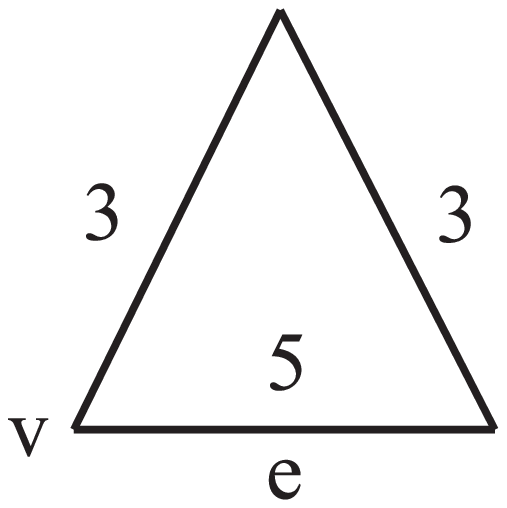}
\end{figure}
The link $\lk(v)$ is connected since otherwise we would have $b_1(Z)\ge 1$ (by Corollary 2.1 from \cite{CF1}) implying $\chi(Z)\le 1$ and $L(Z) \ge -2$, a contradiction. Hence, the link $\lk(v)$ is a connected graph with one vertex of degree 3, one vertex of degree 5 and all other vertices of degree 2. 
There are two possibilities for $\lk(v)$ which are shown in Figure \ref{links}. 
\begin{figure}[h]
\centering
\includegraphics[width=0.6\textwidth]{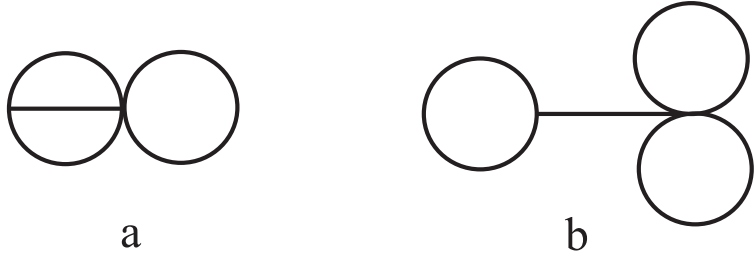}
\caption{Links of a vertex incident to an edge of degree 4.}\label{links}
\end{figure}
A neighbourhood of the point $v$ is the cone $v\cdot \lk(v)$ over the link $\lk(v)$. We may represent $\lk(v)$ as the union $A \cup B$ where $A$ is a circle and the intersection $A\cap B$ is one point, the vertex of degree 5. 
We may cut $Z$ from the vertex $v$ and along the edge $e$ introducing instead of $v$ two new vertices
$v_1$ and $v_2$ end two edges (of degree 3 and 2) instead of $e$. Formally we replace the cone $v\cdot \lk(v)$ by the union of two cones 
$(v_1\cdot A)\cup (v_2\cdot B)$ as shown in Figure \ref{cones}. 
The obtained 2-complex $\hat Z$ is a minimal cycle 
$\chi(\hat Z)=\chi(Z)=2$ and 
$L(\hat Z)= -3$. To apply Lemma \ref{lmB1} we want to show that $\tilde{\nu}(\hat{Z})>1/3$. 
The negation $\tilde \nu(\hat Z) \le 1/3$ means that there exists a subgraph $H\subset \hat{Z}^{(1)}$ with $\nu(H)\leq 1/3$. 
Identifying two adjacent edges of $H$ we obtain a subgraph $H'$ of the 1-skeleton $Z^{(1)}$ with 
$v(H')=v(H)-1$, $e(H')=e(H)-1$ and now the inequality $e(H)\ge 3 v(H)$ implies $e(H') \ge 3v(H')$, and therefore $\tilde \nu(Z) \le 1/3$ 
which contradicts our assumption
$\tilde{\nu}({Z})>1/3$.


From Lemma \ref{lmB1} we know that $\hat Z$ is isomorphic to 
$P^2\cup D^2$ where the intersection $P^2\cap D^2=P^1\subset P^2$ has length 3 (since $L(\hat Z)=-3$). Therefore, we obtain that $Z$ can be obtained from 
$P^2\cup D^2$ by identifying two adjacent edges. 
\begin{figure}[h]
\centering
\includegraphics[width=0.45\textwidth]{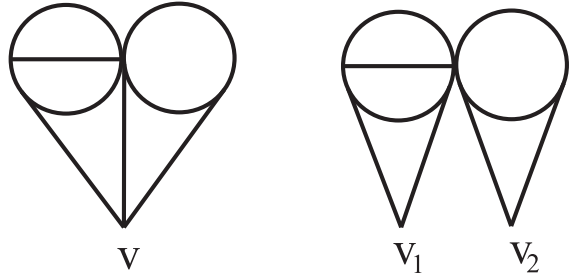}
\caption{Resolving the cone.}\label{cones}
\end{figure}

Consider now the remaining case $e(\Gamma)=4$.  Then $\Gamma$ has three edges of degree 3 and one edge of degree 4. 
Besides, the edges of degree 3 form a cycle since $b_1(\Gamma_{\rm odd})\geq 1$.
Suppose $e(\Gamma)=4$, i.e. $\Gamma=\Gamma_{\rm odd}\cup e$ where $\Gamma_{\rm odd}$ is a cycle of length 3 with all edges of degree 3, and where $e$ is the edge of degree 4. 
Then $e$ contains a free vertex $v$ in $\Gamma$. 
Since $\deg_Z(e)=4$, we see that the link $\lk_Z(v)$ is topologically a wedge of two circles. A neighbourhood of $v$ in $Z$ is a cone $v\cdot \lk(v)$ and representing 
the link $\lk(v)$ and a union of two circles $A\cup B$ intersecting at the vertex of degree 4, we may replace the cone $v\cdot \lk(v)$ by the union of two cones
$(v_1\cdot A) \cup (v_2\cdot B)$ where $v_1$ and $v_2$ are two new vertices.
We obtain a simplicial complex $\hat{Z}$ such that $Z$ is obtained from $\hat{Z}$ by identifying two adjacent edges. 
Clearly $\hat{Z}$ is a minimal cycle of type $B$ with all edges of degree $\leq 3$. 
As in the case $e(\Gamma)=3$ considered above one shows that $\tilde{\nu}(\hat{Z})>1/3$. 
Thus, we see that $\hat Z$ is a minimal cycle satisfying conditions of Lemma \ref{lmB1} and $Z$ is obtained from $\hat Z$ by identifying two adjacent edges. 
\end{proof}

\begin{corollary}\label{freeproduct} Let $X$ be a connected 2-complex satisfying $\tilde \nu(X)>1/3$. Then $X$ is homotopy equivalent to a wedge of circles, 2-spheres and real projective planes. Besides, there exists a subcomplex $X'\subset X$ containing the 1-skeleton of $X$ and having the homotopy type of a wedge of circles and real projective planes and such that $\pi_1(X') \to \pi_1(X)$ is an isomorphism. 
In particular, the fundamental group of $X$ is a free product of several copies of $\Z$ and $\Z_2$ and hence it is hyperbolic. 
\end{corollary}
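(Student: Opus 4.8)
The plan is to induct on the number of $2$-faces $f(X)$, peeling off a $2$-sphere at each step by means of the classification of minimal cycles with $\tilde\nu>1/3$ obtained in Lemmas \ref{nonasph}, \ref{lmB1} and \ref{lmB2}, with Corollary \ref{b201} serving as the base case. First I would dispose of the case $b_2(X)=0$ (which includes $f(X)=0$): here Corollary \ref{b201} directly gives that $X$ is homotopy equivalent to a wedge of circles and projective planes, and one may take $X'=X$, which trivially contains the $1$-skeleton and for which $\pi_1(X')\to\pi_1(X)$ is the identity. So from now on assume $b_2(X)\ge 1$.

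In the inductive step I would first produce a minimal cycle $Z\subset X$: choosing a subcomplex of $X$ minimal with respect to inclusion among those with $b_2\ge 1$ (equivalently, the subcomplex spanned by a $\Z_2$-$2$-cycle of minimal support), one verifies in the standard way that $b_2(Z)=1$, that $Z$ is pure, and that every proper subcomplex of $Z$ has vanishing $b_2$, so $Z$ is indeed a minimal cycle. Since every subcomplex of $Z$ is a subcomplex of $X$, one has $\tilde\nu(Z)\ge\tilde\nu(X)>1/3$, so $Z$ falls under Lemma \ref{nonasph} if it is of type A, and under Lemma \ref{lmB1} or \ref{lmB2} if it is of type B (according to whether its maximal edge degree is $\le 3$ or $\ge 4$). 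In each of these cases the corresponding lemma furnishes a face $\sigma\subset Z$ whose boundary $\partial\sigma$ is null-homotopic in $Z-\int(\sigma)$, hence also in the larger complex $X_1:=X-\int(\sigma)$.

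I would then apply the induction to $X_1$. Deleting the open face $\sigma$ changes neither the $1$-skeleton nor the connectivity, so $X_1$ is a connected $2$-complex with $\tilde\nu(X_1)=\tilde\nu(X)>1/3$ and $f(X_1)=f(X)-1$. Since $X=X_1\cup_{\partial\sigma}\sigma$ is obtained from $X_1$ by attaching a $2$-cell along a loop null-homotopic in $X_1$, we get $X\simeq X_1\vee S^2$ and the inclusion $X_1\hookrightarrow X$ induces an isomorphism on $\pi_1$. By the inductive hypothesis there is a subcomplex $X_1'\subset X_1$ containing the $1$-skeleton of $X_1$, homotopy equivalent to a wedge of circles and projective planes, with $\pi_1(X_1')\to\pi_1(X_1)$ an isomorphism. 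Setting $X':=X_1'$ one then has: $X'\subset X$ contains the $1$-skeleton of $X$ (which coincides with that of $X_1$); $X'$ is homotopy equivalent to a wedge of circles and projective planes; and $\pi_1(X')\to\pi_1(X_1)\to\pi_1(X)$ is an isomorphism. Moreover $X\simeq X_1\vee S^2$ is homotopy equivalent to a wedge of circles, $2$-spheres and projective planes. Finally, since $X'$ is homotopy equivalent to a wedge of finitely many circles and projective planes, $\pi_1(X)\cong\pi_1(X')$ is a free product of finitely many copies of $\Z$ and $\Z_2$, and hence hyperbolic.

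The conceptual work has already been absorbed into Lemmas \ref{nonasph}--\ref{lmB2}, so the remaining difficulty is mostly bookkeeping. The one point requiring genuine care is that the induction must carry the subcomplex $X'$ along and confirm at each stage that it still contains the \emph{full} $1$-skeleton --- this works precisely because removing the \emph{open} face $\sigma$ leaves the $1$-skeleton untouched, so the $1$-skeleton is an invariant of the whole process. A secondary point is the existence of the minimal cycle $Z$ inside $X$, which is handled by the minimal-support argument above. The closest thing to a real obstacle is simply keeping the three sources of topology ($S^1$'s contributing $\Z$ to $\pi_1$, $P^2$'s contributing $\Z_2$, and $S^2$'s contributing nothing) correctly separated, so that the homotopy type and the fundamental group are described consistently throughout the induction.
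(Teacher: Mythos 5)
Your proof is correct and takes essentially the same approach as the paper's: find a minimal cycle $Z$, use Lemmas \ref{nonasph}, \ref{lmB1}, \ref{lmB2} to select a face $\sigma$ with $\partial\sigma$ null-homotopic in $Z-\int(\sigma)$, and peel off $X\simeq X_1\vee S^2$ with $X_1 = X-\int(\sigma)$. The paper inducts on $b_2(X)$ rather than on $f(X)$, but both quantities drop by one at each step and the rest of the argument is identical.
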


This Corollary is equivalent to Theorem 1.2 from \cite{Babson}. The proof given below is independent of the arguments of \cite{Babson}. Our proof is based on 
the classification of minimal cycles described above in the this section. 
This classification of minimal cycles is not only useful for the proof of Corollary \ref{freeproduct} but it is also plays an important role in the proofs of many results presented in this paper. 

\begin{proof}
We will act by induction on $b_2(X)$. 

If $b_2(X)=0$ and $\tilde \nu(X)>1/3$ then using Corollary \ref{b201} we see that the complex $X$ is homotopy equivalent to a wedge of circles and projective planes. In this case one sets $X'=X$ and the result follows. 

Assume now that Corollary \ref{freeproduct} was proven for all connected 2-complexes $X$ satisfying $\tilde \nu(X)>1/3$ and $b_2(X)<k$. 

Consider a 2-complex $X$ satisfying $b_2(X)=k>0$ and $\tilde \nu(Z) >1/3$. 

Find a minimal cycle $Z\subset X$. Then the homomorphism $H_2(Z; \Z)=\Z \to H_2(X;\Z)$ is an injection. 
Let $\sigma\subset Z$ be a simplex given by Lemmas \ref{nonasph}, \ref{lmB1}, \ref{lmB2}. 
Then $Y= X- \int(\sigma)$ satisfies $b_2(Y)=k-1$. Indeed, 
$H_2(X,Y)=\Z$ and 
in the exact sequence
$$0\to H_2(Y) \to H_2(X) \to H_2(X, Y) \stackrel{\partial_\ast}\to H_1(Y) \to \dots$$
the homomorphism $\partial_\ast=0$ is trivial since the curve $\partial \sigma$ is contractible in $Y$. 
Since $\tilde \nu(Y) >1/3$, by the induction hypothesis there exists a subcomplex $Y'\subset Y$ such that $\pi_1(Y') \to \pi_1(Y)$ is an isomorphism and $Y'$ is homotopy equivalent to a wedge of circles and projective planes. However $X$ is homotopy equivalent to $Y\vee S^2$ and the result follows (with $X'=Y'$).

%
%
%
%
%
%

\end{proof}

\section{The Whitehead Conjecture}

If $p\ll n^{-1/3}$ then $\dim X_\Gamma\le 5$ a.a.s. (see Corollary \ref{dimension}). We consider below the 2-dimensional skeleton $X_\Gamma^{(2)}$ which can be viewed as a random 2-complex. In this section we shall examine the validity of the Whitehead Conjecture for aspherical subcomplexes of $X_\Gamma^{(2)}$.

Recall that for any simplicial complex $K$, its first barycentric subdivision $K'$ is a clique complex. 
Thus, if there exists a counterexample to the Whitehead Conjecture then there exists a counterexample of the form $X^{(2)}_\Gamma$ for certain graph $\Gamma$.

\begin{theorem}\label{aspherical} Assume that $p\ll n^{-1/3-\epsilon}$, where $\epsilon >0$ is fixed. Then, for a random graph $\Gamma\in G(n,p)$ the 2-skeleton $X^{(2)}_\Gamma$ 
of the clique complex $X_\Gamma$ has the following property with probability tending to 1 as $n\to \infty$: a subcomplex $Y\subset X^{(2)}_\Gamma$ is aspherical if and only if every subcomplex $S\subset Y$ having at most $2\epsilon^{-1}$ edges is aspherical. 
\end{theorem}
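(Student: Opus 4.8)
The plan is to reduce the statement to a local-to-global principle for asphericity of $2$-complexes via the classification of minimal cycles from Section 5. The "only if" direction is trivial: any subcomplex of an aspherical $2$-complex is aspherical, since a subcomplex $S$ of $Y$ with $\pi_2(S)\neq 0$ would carry a nonzero spherical class, and asphericity of $Y$ means every map $S^2\to Y$ is null-homotopic, hence by a standard argument (a map $S^2\to S\hookrightarrow Y$ bounding in $Y$ need not bound in $S$) — wait, this subtlety is exactly the Whitehead Conjecture, so this direction is NOT trivial. Let me restate: the genuine content of the theorem is a biconditional, and BOTH directions use the hyperbolicity/minimal-cycle machinery. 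So first I would clarify that the real work is: (i) if every small subcomplex is aspherical then $Y$ is aspherical; and (ii) conversely. Direction (ii) is the Whitehead-type statement and will follow once we know $Y$ itself satisfies good combinatorial bounds a.a.s.

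For the main implication, suppose $Y\subset X_\Gamma^{(2)}$ is such that every subcomplex on at most $2\epsilon^{-1}$ edges is aspherical, yet $\pi_2(Y)\neq 0$. Then by the sphere theorem / Hurewicz in the universal cover, there is a nontrivial element of $H_2(\tilde Y;\Z)$, and pushing down one finds a minimal cycle $Z\subset Y$ (a pure $2$-complex with $b_2(Z)=1$ all of whose proper subcomplexes have $b_2=0$) that carries a class mapping nontrivially to $\pi_2(Y)$, i.e. $Z$ is \emph{not} null-homotopic in $Y$ when viewed appropriately. The key observation is that the random-complex hypothesis $p\ll n^{-1/3-\epsilon}$ forces, via Theorem \ref{embed}, that every subcomplex $W\subset X_\Gamma$ satisfies $\tilde\nu(W)>1/3$ a.a.s.; more precisely, any subcomplex $S$ with $\nu(S)\le 1/3$ would embed only if $p\gg n^{-1/3}$, and counting over the (at most exponentially many) isomorphism types of such $S$ with a bounded number of edges, together with a separate argument showing no \emph{large} such $S$ embeds, gives the bound. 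Hence every minimal cycle $Z\subset X_\Gamma$ has $\tilde\nu(Z)>1/3$, so Lemmas \ref{nonasph}, \ref{lmB1}, \ref{lmB2} apply: $Z$ is homotopy equivalent to $S^2$ or $S^1\vee S^2$, and crucially for \emph{every} face $\sigma\subset Z$ (in the appropriate $P^2$-part, resp. everywhere in type A) the loop $\partial\sigma$ is null-homotopic in $Z-\mathrm{int}(\sigma)$. This is the engine that kills $\pi_2$: removing an open face from $Z$ does not decrease $b_2$ of the ambient complex in a way that obstructs filling, so one can "unattach" the sphere.

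Now I would run the induction exactly as in the proof of Corollary \ref{freeproduct}, but carrying the subcomplex $Y$ and the smallness hypothesis along. Given $Y$ with $\pi_2(Y)\neq 0$, pick a minimal cycle $Z\subset Y$; it satisfies $\tilde\nu(Z)>1/3$, so it has one of the three normal forms and contains a face $\sigma$ with $\partial\sigma$ contractible in $Z-\mathrm{int}(\sigma)$, hence contractible in $Y-\mathrm{int}(\sigma)=:Y_1$. Then $Y\simeq Y_1\vee S^2$ and $b_2(Y_1)=b_2(Y)-1$. Note $Z$ has a bounded number of edges (its Euler characteristic and the bound $L(Z)\ge -5$ constrain $e(Z)$ — actually $e(Z)$ is \emph{not} bounded, since $P^2$ can be triangulated with arbitrarily many simplices). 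This is the subtle point: $Z$ need not be small, so I cannot directly invoke the hypothesis on subcomplexes with $\le 2\epsilon^{-1}$ edges to conclude $Z$ is aspherical. The resolution: I do not need $Z$ aspherical; I only need $\tilde\nu(Z)>1/3$, which comes for free from the random-complex hypothesis, not from the smallness assumption on $Y$'s subcomplexes. So the smallness hypothesis in the theorem statement is used in the OTHER direction and as the \emph{replacement criterion}: for the forward implication one should instead prove the cleaner statement "$Y\subset X_\Gamma^{(2)}$ is aspherical iff $Y$ contains no minimal cycle, iff [by normal forms and the fact that all minimal cycles here are $\simeq S^2$] $b_2(Y)=0$-ish", and then observe that containing a minimal cycle is detected by containing one of finitely many combinatorial configurations of bounded size — which is precisely where the $2\epsilon^{-1}$ bound enters, analogous to the set $\mathcal L$ of Theorem \ref{thm2}.

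The hard part, and the step I expect to be the main obstacle, is making precise the claim that "$Y$ nonaspherical $\iff$ $Y$ contains a bounded-size witness." A minimal cycle of type B is $P^2\cup D^2$ and $P^2$ is arbitrarily large, so the witness is not the minimal cycle itself. The right move is: asphericity of a $2$-complex fails iff it contains a subcomplex homotopy equivalent to $S^2$ that is "inessential" — but one must extract from such a subcomplex a \emph{bounded} core. I would argue that since $\tilde\nu>1/3$ everywhere, a minimal cycle collapses (after removing one face) to a graph with $b_1\le 1$, so up to homotopy $Z$ is built from a bounded skeleton; the genuinely new input is that the \emph{obstruction to asphericity} of $Y$ — an essential $2$-sphere — must, by the hyperbolicity of $\pi_1(Y)$ (Theorem \ref{hyp}) and the resulting linear isoperimetric inequality, be representable with area bounded by a constant times its boundary length, and one then traps it inside a subcomplex on $O(\epsilon^{-1})$ edges. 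Assembling the isoperimetric constant $c_\epsilon$ from Theorem \ref{hyp} with the area bound to produce the explicit constant $2\epsilon^{-1}$ is the delicate bookkeeping I would expect to occupy the bulk of the real proof.
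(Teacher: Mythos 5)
Your proposal correctly identifies the two main pieces of machinery --- the uniform hyperbolicity of Theorem~\ref{hyp} providing a bounded non-aspherical witness, and the minimal-cycle classification of Lemmas~\ref{nonasph}--\ref{lmB2} combined with Theorem~\ref{embed} --- but it has two genuine gaps, one technical and one structural.

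The technical gap is in the bounding step. You describe an essential sphere being ``representable with area bounded by a constant times its boundary length,'' but a sphere has no boundary, so a linear isoperimetric inequality cannot be applied to it directly. The correct statement, which is Lemma~\ref{lmm} (Corollary 5.3 of \cite{CF1}), is that $M(Y)\le (16/I(Y))^2$: the minimal face-count of a homotopically nontrivial sphere is bounded absolutely by a function of the isoperimetric constant, not linearly in any length. Moreover, you suggest the final constant $2\epsilon^{-1}$ is extracted by ``assembling the isoperimetric constant $c_\epsilon$ with the area bound,'' but this misreads the mechanism. The hyperbolicity bound only produces a preliminary witness of size at most $C_\epsilon$ (a constant of unknown form); the sharp threshold $2\epsilon^{-1}$ then falls out of a completely separate and much simpler computation: inside that bounded witness one finds (via Lemma~\ref{b20} or a minimal cycle) a subcomplex $S''$ with $\pi_2(S'')\neq 0$, and formula~(\ref{useful}) together with $\nu(S'')\ge 1/3+\epsilon$, $\chi(S'')\le 2$, $L(S'')\le 0$ gives $\nu(S'')\le 1/3+2/e(S'')$, whence $e(S'')\le 2\epsilon^{-1}$. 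The constant $c_\epsilon$ is used only to make the list of candidate witnesses finite; it does not enter the final edge bound.

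The structural gap is that you do not engage with the converse direction at all. You correctly note that ``$Y$ aspherical $\Rightarrow$ small subcomplexes aspherical'' is a Whitehead-type statement that cannot be taken for granted, but then you say it ``will follow once we know $Y$ itself satisfies good combinatorial bounds a.a.s.'' and never return to it. The paper's proof of this direction (its ``inverse implication'') is nontrivial: one takes a small bubble $S\subset Y$ with $\pi_2(S)\ne 0$ and $\tilde\nu(S)\ge 1/3+\epsilon$ (the latter forced by Theorem~\ref{embed}), and splits into two cases. If $b_2(S)>0$ one uses that the Hurewicz map of a minimal cycle with $\tilde\nu>1/3$ is onto (Lemma~\ref{nonasph}) together with injectivity of $H_2(\text{minimal cycle})\to H_2(Y)$ to conclude $\pi_2(Y)\neq 0$. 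If $b_2(S)=0$ one uses Lemma~\ref{b20} to extract a subcomplex $K\subset S$ homotopy equivalent to $P^2$, and then invokes Cockcroft's theorem \cite{CC}, \cite{Adams}: a $2$-dimensional aspherical complex cannot contain a subcomplex homotopy equivalent to $P^2$. This last input is essential and is entirely absent from your outline; without it the $b_2=0$ case is unresolved, so the ``only if'' direction is not established.
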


Intuitively, this statement asserts that a subcomplex $Y\subset X_\Gamma^{(2)}$ is aspherical iff it has no \lq\lq small bubbles\rq\rq\  where by a \lq\lq bubble\rq\rq \ we understand 
a subcomplex $S\subset Y$ with $\pi_2(S)\ne 0$ and \lq\lq a bubble is small\rq\rq\  if it satisfies the condition $e(S)\le 2\epsilon^{-1}$.

\begin{corollary} \label{wc} Assume that $p\ll n^{-1/3-\epsilon}$, where $\epsilon >0$  is fixed. Then, for a random graph $\Gamma\in G(n,p)$, the clique complex $X_\Gamma$ has the following property with probability tending to 1 as $n\to \infty$: any aspherical subcomplex $Y\subset X_\Gamma^{(2)}$ satisfies the Whitehead Conjecture, i.e. any subcomplex $Y'\subset Y$ is also aspherical. 
\end{corollary}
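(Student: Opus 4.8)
The plan is to derive Corollary~\ref{wc} directly from Theorem~\ref{aspherical}. Theorem~\ref{aspherical} gives a \emph{local} characterisation of asphericity for subcomplexes of $X_\Gamma^{(2)}$: a subcomplex $Y\subset X_\Gamma^{(2)}$ is aspherical if and only if every subcomplex $S\subset Y$ with $e(S)\le 2\epsilon^{-1}$ is aspherical. The key observation is that this characterisation is automatically monotone under passing to subcomplexes, because the small subcomplexes of a subcomplex $Y'\subset Y$ are in particular small subcomplexes of $Y$ itself.

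Concretely, I would argue as follows. Fix $\epsilon>0$ and $p\ll n^{-1/3-\epsilon}$, and let $\Gamma\in G(n,p)$ be a random graph lying in the probability-$1$ event furnished by Theorem~\ref{aspherical}; so $X_\Gamma^{(2)}$ has the stated local-to-global property. Let $Y\subset X_\Gamma^{(2)}$ be any aspherical subcomplex and let $Y'\subset Y$ be an arbitrary subcomplex. I must show $Y'$ is aspherical. By Theorem~\ref{aspherical} it suffices to check that every subcomplex $S\subset Y'$ with $e(S)\le 2\epsilon^{-1}$ is aspherical. But any such $S$ is also a subcomplex of $Y$ with $e(S)\le 2\epsilon^{-1}$, and since $Y$ is aspherical, Theorem~\ref{aspherical} (applied to $Y$) tells us precisely that every such small subcomplex of $Y$ is aspherical. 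Hence $S$ is aspherical, and applying Theorem~\ref{aspherical} in the other direction to $Y'$ shows $Y'$ is aspherical. Since $Y'$ was arbitrary, $Y$ satisfies the Whitehead Conjecture.

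There is essentially no obstacle here: the whole content has been placed into Theorem~\ref{aspherical}, and Corollary~\ref{wc} is the observation that a bound on the size of potential ``bubbles'' that is uniform over all subcomplexes of $X_\Gamma^{(2)}$ immediately yields hereditary asphericity. The only mild point to be careful about is the logical direction of the ``if and only if'' in Theorem~\ref{aspherical}: one uses the ``only if'' direction for $Y$ (asphericity of $Y$ forces asphericity of its small subcomplexes) and the ``if'' direction for $Y'$ (asphericity of all small subcomplexes of $Y'$ forces asphericity of $Y'$). No additional probabilistic estimate is needed, as the same probability-$1$ event that makes Theorem~\ref{aspherical} hold is the event we work in.

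\begin{proof}[Proof of Corollary~\ref{wc}]
By Theorem~\ref{aspherical}, with probability tending to $1$ as $n\to\infty$ the random graph $\Gamma\in G(n,p)$ has the property that a subcomplex $Y\subset X_\Gamma^{(2)}$ is aspherical if and only if every subcomplex $S\subset Y$ with $e(S)\le 2\epsilon^{-1}$ is aspherical. We show that on this event every aspherical subcomplex $Y\subset X_\Gamma^{(2)}$ satisfies the Whitehead Conjecture. Let $Y'\subset Y$ be an arbitrary subcomplex and let $S\subset Y'$ be any subcomplex with $e(S)\le 2\epsilon^{-1}$. Then $S$ is also a subcomplex of $Y$ with $e(S)\le 2\epsilon^{-1}$, so since $Y$ is aspherical, Theorem~\ref{aspherical} implies that $S$ is aspherical. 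Thus every subcomplex of $Y'$ with at most $2\epsilon^{-1}$ edges is aspherical, and applying Theorem~\ref{aspherical} once more we conclude that $Y'$ is aspherical. As $Y'\subset Y$ was arbitrary, $Y$ satisfies the Whitehead Conjecture.
\end{proof}
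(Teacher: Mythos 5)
Your proof is correct and is exactly the argument the paper intends: the paper simply remarks that Corollary \ref{wc} ``obviously follows'' from Theorem \ref{aspherical}, and your write-up spells out that deduction (small subcomplexes of $Y'$ are small subcomplexes of $Y$, then use the two directions of the equivalence). Nothing further is needed.
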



Here is another interesting statement about the local structure of aspherical subcomplexes of $X_\Gamma^{(2)}$. 

\begin{corollary}\label{aspherical1} Assume that $p\ll n^{-1/3-\epsilon}$, where $\epsilon >0$ is fixed. 
Then, for a random graph $\Gamma\in G(n,p)$ the clique complex $X_\Gamma$ 
has the following property with probability tending to 1 as $n\to \infty$: 
for any aspherical subcomplex 
$Y\subset X^{(2)}_\Gamma$ any subcomplex $S\subset Y$ 
with $e(S) \le 2\epsilon^{-1}$ is collapsible to a graph. 
\end{corollary}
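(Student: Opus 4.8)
The plan is to reduce Corollary~\ref{aspherical1} to the deterministic statement that \emph{every aspherical $2$-complex $S$ with $\tilde\nu(S)>1/3$ is simplicially collapsible to a graph}, and then to supply the randomness using Theorems~\ref{embed} and~\ref{aspherical}. For the probabilistic part, note that there are only finitely many isomorphism types of simplicial $2$-complexes with at most $2\epsilon^{-1}$ edges; if such a type $S$ had $\tilde\nu(S)\le 1/3$ then $n^{-\tilde\nu(S)}\ge n^{-1/3}\gg n^{-1/3-\epsilon}\gg p$, so by Theorem~\ref{embed}(A) it would fail to embed into $X_\Gamma$ a.a.s. Taking the union over this finite set of types, a.a.s.\ every subcomplex $S\subset X_\Gamma$ with $e(S)\le 2\epsilon^{-1}$ satisfies $\tilde\nu(S)>1/3$. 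Also, by Theorem~\ref{aspherical} (which holds a.a.s.), if $Y\subset X_\Gamma^{(2)}$ is aspherical then every subcomplex $S\subset Y$ with $e(S)\le 2\epsilon^{-1}$ is itself aspherical. On the intersection of these two events (of probability tending to $1$), every such $S$ is an aspherical $2$-complex with $\tilde\nu(S)>1/3$, and the deterministic statement finishes the proof.

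To prove the deterministic statement, let $S$ be such a complex; we may assume $S$ is connected, the general case being componentwise. By Corollary~\ref{freeproduct}, $S$ is homotopy equivalent to a wedge of circles, $2$-spheres and real projective planes. Since $\pi_2(S)=0$, while $\pi_2(S^2)\ne 0$, $\pi_2(P^2)\ne 0$, and the $\pi_2$ of each summand injects into the $\pi_2$ of the wedge via the retraction onto that summand, there can be no $S^2$- or $P^2$-summand; thus $S$ is homotopy equivalent to a wedge of circles. In particular $H_2(S;\Z)=0$, and $\pi_1(S)$ is free, hence torsion free (this last property also follows directly from $S$ being aspherical and finite dimensional). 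Now collapse $S$ maximally: repeatedly perform elementary collapses, deleting triangles together with free edges and then dangling edges and vertices, until reaching a subcomplex $S^\infty\subset S$ having no free faces. If $S^\infty$ is a graph, then $S$ is collapsible to a graph and we are done.

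Suppose instead that $S^\infty$ contains a triangle. Then the pure part of $S^\infty$ is a non-empty closed pure $2$-complex, and taking a strongly connected component of its set of faces we obtain a strongly connected closed pure $2$-complex $W\subset S^\infty\subset S$ with $\tilde\nu(W)\ge\tilde\nu(S)>1/3$, so $\nu(W)>1/3$. If $b_2(W)\ne 0$, then $W$ contains a minimal cycle $Z$, and since $S$ is $2$-dimensional the inclusion induces an injection $H_2(Z;\Z)=\Z\hookrightarrow H_2(S;\Z)$, contradicting $H_2(S;\Z)=0$; hence $b_2(W)=0$. By Lemma~\ref{b20}, $W$ is then a triangulated projective plane $P^2$ or the quotient $P'$ described there. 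In either case $\pi_1(W)\cong\Z_2$, and every edge of $W$ has even degree, so $z=\sum_{\sigma\in F(W)}\sigma$ is a non-zero $2$-cycle mod $2$ representing a non-zero class of $H_2(W;\Z_2)$. Because $\pi_1(S)$ is torsion free, the homomorphism $\pi_1(W)\to\pi_1(S)$ is trivial, so the inclusion $W\hookrightarrow S$ lifts to the (contractible) universal cover of $S$ and is therefore null-homotopic; hence it sends the class of $z$ to zero in $H_2(S;\Z_2)$. But $S$ has no $3$-chains, so $H_2(S;\Z_2)$ is the group of mod $2$ $2$-cycles, forcing $z=0$ as a chain in $S$ --- absurd, since $z$ is a non-empty sum of distinct faces. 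This contradiction shows that $S^\infty$ is a graph.

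The step I expect to be the main obstacle is exactly the last one: converting asphericity of the \emph{finite} complex $S$ into the impossibility of a residual projective-plane-type subcomplex. The key is that asphericity, combined with $\tilde\nu(S)>1/3$ through Corollary~\ref{freeproduct}, forces $\pi_1(S)$ to be torsion free and $H_2(S;\Z)=0$; a subcomplex $W\cong P^2$ or $P'$ then has null-homotopic inclusion into $S$, which is incompatible with $W$ carrying a non-trivial mod $2$ fundamental class inside the $2$-dimensional complex $S$. The remaining ingredients --- that a maximal collapse which is not a graph contains a strongly connected closed pure subcomplex, that $\tilde\nu(W)\ge\tilde\nu(S)$ when $W\subset S$, and the assembly of the a.a.s.\ events via Theorems~\ref{embed} and~\ref{aspherical} --- are routine.
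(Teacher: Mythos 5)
Your proof is correct, and its overall skeleton coincides with the paper's: reduce to the a.a.s.\ events that every small subcomplex $S$ is aspherical (Theorem \ref{aspherical}) and satisfies $\tilde \nu(S)>1/3$ (Theorem \ref{embed} plus finiteness of isomorphism types), then collapse maximally and show that a non-graph residue, which is a closed pure complex with $\tilde\nu>1/3$, is impossible. The difference is in how the final contradiction is reached. The paper argues directly with asphericity: in the case $b_2>0$ it quotes Lemma \ref{nonasph} (a minimal cycle carries a nontrivial spherical class), and in the case $b_2=0$ it quotes Lemma \ref{b20} together with Cockcroft's theorem that an aspherical $2$-complex cannot contain a (quotient of a) triangulated $P^2$. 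You instead first upgrade ``aspherical and $\tilde\nu>1/3$'' via Corollary \ref{freeproduct} to ``homotopy equivalent to a wedge of circles'', and then both cases are killed by elementary homology of a $2$-dimensional complex: a minimal cycle would give a nonzero class in $H_2(S;\Z)=0$, and a residual $P^2$ or $P'$ would give a nonzero mod~$2$ fundamental class in $H_2(S;{\mathbf Z}_2)=0$ (your detour through lifting to the universal cover is not even needed, since cycles cannot bound in a $2$-complex). This buys you independence from the external reference to Cockcroft's theorem, at the cost of invoking Corollary \ref{freeproduct}, which the paper has available anyway; both routes are sound and of comparable length.
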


We now start preparations for the proofs of theorems \ref{aspherical} and \ref{aspherical1} which appear below in this section.
Corollary \ref{wc} obviously follows from 
Theorem \ref{aspherical}.

Let $Y$ be a simplicial complex with $\pi_2(Y)\not=0$. As in \cite{CF1}, we define a numerical invariant $M(Y)\in \Z$, $M(Y) \ge 4$, as the minimal number of faces in a 
2-complex $\Sigma$ 
homeomorphic to the sphere
$S^2$ such that there exists a homotopically nontrivial simplicial map $\Sigma\to Y$. 

We define $M(Y)=0$, if $\pi_2(Y)=0$.

\begin{lemma}[See Corollary 5.3 in \cite{CF1}]\label{lmm} Let $Y$ be a 2-complex with $I(Y)\ge c>0$. Then 
$$M(Y)\le \left(\frac{16}{c}\right)^2.$$
\end{lemma}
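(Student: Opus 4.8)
The plan is to deduce this from the analogous statement for van Kampen diagrams together with a surgery argument converting a nontrivial spherical map into a nontrivial loop of controlled length. First I would recall the content of \cite{CF1}, Corollary 5.3: the invariant $M(Y)$ measures the minimal number of triangles in a simplicial $S^2$ mapping nontrivially to $Y$, and the key point is that if $\pi_2(Y)\ne 0$ then there is a simplicial sphere $\Sigma \to Y$ that is homotopically nontrivial but \emph{efficient}, in the sense that one cannot reduce the number of faces by an elementary homotopy. For such an efficient sphere, cutting $\Sigma$ along a suitable simple closed curve $\gamma$ that separates $\Sigma$ into two discs $\Sigma = D_1 \cup_\gamma D_2$ produces, on the $Y$-side, a null-homotopic loop $\bar\gamma$ of length $|\gamma|$ whose area $A_Y(\bar\gamma)$ is realized by (at least) one of the two discs $D_i$, i.e.\ $A_Y(\bar\gamma) \le \min(f(D_1), f(D_2))$.

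The main steps, in order: (1) Assume $\pi_2(Y)\ne 0$ and pick an efficient simplicial sphere $\Sigma \to Y$ with $f(\Sigma) = M(Y)$ triangles. (2) On a 2-sphere triangulated with $M$ triangles one can find a simple closed edge-loop $\gamma$ that bisects the triangle count, i.e.\ separates $\Sigma$ into $D_1 \cup_\gamma D_2$ with $f(D_i) \le M/2$ roughly balanced, and whose length satisfies $|\gamma| \le C\sqrt{M}$ for an absolute constant $C$; this is a combinatorial isoperimetric/separator statement about triangulated spheres (Gauss--Bonnet or a Lipton--Tarjan-type argument gives $|\gamma| = O(\sqrt{M})$, and one can pin down the constant to get $16$). (3) The image loop $\bar\gamma$ in $Y$ is null-homotopic (it bounds $D_1$ in $\Sigma$, hence in $Y$), so the hypothesis $I(Y)\ge c$ yields $A_Y(\bar\gamma) \le c^{-1}|\gamma|$. (4) By efficiency of $\Sigma$, neither $D_i$ can be replaced by a filling of $\bar\gamma$ with strictly fewer faces — otherwise we could surger $\Sigma$ to a smaller nontrivial sphere (the surgered object stays a sphere mapping to $Y$, and it stays nontrivial because one of the two halves already carried the nontrivial class, or else both halves are trivial and the smaller sphere still represents the original class). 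Hence $\min(f(D_1), f(D_2)) \le A_Y(\bar\gamma)$. (5) Combining, $M/2 \lesssim \min f(D_i) \le A_Y(\bar\gamma) \le c^{-1}|\gamma| \le c^{-1}\cdot C\sqrt{M}$, which gives $\sqrt{M} \le 2Cc^{-1}$, i.e.\ $M(Y) \le (16/c)^2$ once the constants are tracked.

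The step I expect to be the main obstacle is step (4), the efficiency/surgery argument: one must be careful that replacing a sub-disc $D_i$ of the spanning sphere by a genuinely smaller filling of $\bar\gamma$ produces a new simplicial sphere that is still homotopically nontrivial in $Y$. The subtlety is that the nontrivial $\pi_2$-class might "live on both sides" of $\gamma$, so one needs to argue that \emph{some} choice of bisecting curve $\gamma$ can be made so that after surgery on the cheaper side the class survives; alternatively one chooses $\gamma$ among all bisecting curves to minimize area and invokes minimality of $M(Y)$ directly. Since this argument is carried out in full in \cite{CF1}, I would cite it for the delicate part and only reproduce the isoperimetric bookkeeping (steps 2, 3, 5) needed to extract the explicit constant $(16/c)^2$. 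The remaining steps (1) and (3) are immediate from the definitions of $M(Y)$ and $I(Y)$ respectively.
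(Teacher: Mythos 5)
The paper offers no proof of this lemma: it is imported verbatim as Corollary 5.3 of \cite{CF1} (just as the companion bound on $N_m(Y)$ is imported as Lemma 4.7 of \cite{CF1}), so there is no in-paper argument to compare your sketch against; like the paper, you end up citing \cite{CF1} for the delicate steps. That said, your cut-and-cap outline is the standard route to such a bound and is sound in outline: take a homotopically nontrivial simplicial sphere $\Sigma\to Y$ with $f(\Sigma)=M(Y)$, cut along a short balanced separating edge-cycle $\gamma=\partial D_1=\partial D_2$, fill the image loop using $A_Y(\bar\gamma)\le c^{-1}|\bar\gamma|$, and cap each half with the filling; since the classes of the two capped spheres sum to $[\Sigma]$ in $\pi_2(Y)$, at least one of them is nontrivial, and minimality of $M(Y)$ then forces $A_Y(\bar\gamma)\ge \min(f(D_1),f(D_2))$ --- this is the correct form of your step (4); the stronger assertion that \emph{neither} half can be replaced by a cheaper filling is neither needed nor justified, but your parenthetical additivity remark supplies what is actually required. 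Two points to tighten if you were to write this out in full: (i) a simple separating cycle with both sides of size at most $M/2$ need not exist; the simple-cycle separator theorem for planar triangulations gives each side at most $2M/3$ with $|\gamma|=O(\sqrt{M})$, which still yields $\min f(D_i)\ge M/3$ and a constant comfortably within $(16/c)^2$; (ii) gluing a simplicial (van Kampen) filling to $D_i$ need not literally produce a simplicial complex homeomorphic to $S^2$, as the definition of $M(Y)$ requires, and handling this (together with the basepoint/$\pi_1$-action bookkeeping in the $\pi_2$-additivity) is precisely the technical content one would be citing \cite{CF1} for.
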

Combining this with Theorem \ref{hyp} we obtain:

\begin{lemma}\label{lm17}
Assume that the probability parameter satisfies $p\ll n^{-1/3-\epsilon}$ where $\epsilon>0$ is fixed. Then there exists a constant $C_\epsilon>0$ such that for a random graph $\Gamma\in G(n,p)$ the clique complex $X_\Gamma$ has the following property with probability tending to one: for any subcomplex 
$Y\subset X^{(2)}_\Gamma$ one has $M(Y) \le C_\epsilon$. 
\end{lemma}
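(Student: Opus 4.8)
\textbf{Proof plan for Lemma \ref{lm17}.} The statement is an immediate consequence of combining Lemma \ref{lmm} with Theorem \ref{hyp}, so the proof is essentially a bookkeeping argument, and the only ``work'' is to track the uniformity of the constants. First I would invoke Theorem \ref{hyp}: since $p\ll n^{-1/3-\epsilon}$, there is a constant $c_\epsilon>0$ depending only on $\epsilon$ such that, with probability tending to $1$ as $n\to\infty$, every subcomplex $Y\subset X_\Gamma$ satisfies $I(Y)\ge c_\epsilon$. Fix such a graph $\Gamma$ (i.e. work on the a.a.s.\ event). Since $I(Y)=I(Y^{(2)})$ depends only on the $2$-skeleton, the same lower bound $I(Y)\ge c_\epsilon$ holds for every subcomplex $Y\subset X_\Gamma^{(2)}$.

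Next I would apply Lemma \ref{lmm} to each such $2$-complex $Y\subset X_\Gamma^{(2)}$: if $\pi_2(Y)\ne 0$ then $M(Y)\le (16/c_\epsilon)^2$, and if $\pi_2(Y)=0$ then by definition $M(Y)=0\le(16/c_\epsilon)^2$. In either case, setting
\begin{equation*}
C_\epsilon \;=\; \left(\frac{16}{c_\epsilon}\right)^2,
\end{equation*}
which depends only on $\epsilon$ (through $c_\epsilon$), we obtain $M(Y)\le C_\epsilon$ for every subcomplex $Y\subset X_\Gamma^{(2)}$. Since this holds on an event of probability tending to $1$, the lemma follows.

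The only point requiring slight care is that the bound must be \emph{simultaneous} over all subcomplexes $Y$, but this is already built into Theorem \ref{hyp}, whose conclusion is a single a.a.s.\ event on which $I(Y)\ge c_\epsilon$ holds for \emph{all} subcomplexes $Y\subset X_\Gamma$ at once; Lemma \ref{lmm} is then applied deterministically, subcomplex by subcomplex, on that event. Thus there is no additional union bound or probabilistic estimate to perform here, and no genuine obstacle; the content of the lemma lies entirely in Theorem \ref{hyp} and Lemma \ref{lmm}, which I am free to use.
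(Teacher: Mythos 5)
Your proof is correct and is exactly the paper's argument: the paper also derives Lemma \ref{lm17} directly by combining Theorem \ref{hyp} (which gives $I(Y)\ge c_\epsilon$ simultaneously for all subcomplexes a.a.s.) with Lemma \ref{lmm}, yielding $C_\epsilon=(16/c_\epsilon)^2$. No differences worth noting.
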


Clearly, Lemma \ref{lm17} follows from Theorem \ref{hyp} and from Lemma \ref{lmm}. 
%
%
%
%
%
%
%
%

\begin{proof}[Proof of Theorem \ref{aspherical}]
Let $\Gamma$ be a random graph, $\Gamma\in G(n,p)$, $p\ll n^{-1/3-\epsilon}$, and let $Y\subset X_\Gamma^{(2)}$ be a 2-dimensional subcomplex. Suppose that $\pi_2(Y)\not=0$, i.e. 
$Y$ is not aspherical. Using Lemma \ref{lm17} we have $M(Y)\le C_\epsilon$ a.a.s. where $C_\epsilon >0$ is a constant depending on $\epsilon$.  
There is a homotopically nontrivial simplicial map $\phi: S\to Y$ where $S$ is a triangulation of the sphere $S^2$ having at most $C_\epsilon$ faces. 
Hence, $Y$ must contain as a subcomplex a simplicial quotient $S'=\phi(S)$ of a triangulation $S$ of the sphere $S^2$ having at most $C_\epsilon$ faces and such that 
$\phi_\ast: \pi_2(S) \to \pi_2(S')$ is nonzero. 

Consider the set of isomorphism types $\LL=\{S'\}$ of pure 2-complexes $S'$ having at most $C_\epsilon$ faces and such that $\pi_2(S')\not= 0$; clearly the list $\LL$ is finite. 
By Theorem \ref{embed} any  $S'\in \LL$ satisfying $\tilde\nu(S')< 1/3+\epsilon$ is not embeddable into $Y$, a.a.s.
Next we show that any $S'\in \LL$ with $\tilde \nu(S') \ge 1/3+\epsilon$ contains a small bubble, i.e. a non-aspherical subcomplex with at most $2\epsilon^{-1}$ edges. 
If $b_2(S')=0$ then by Lemma \ref{b20} we see that $S'$ contains a subcomplex $S''\subset S$ which is either a triangulation of $P^2$ or a triangulation of $P^2$ with 2 adjacent edges identified. In both cases one has $\pi_2(S'') \not=0$. 
Now we use the inequality $$\nu(S'') \ge 1/3 +\epsilon$$ to show that $e(S'') \le \epsilon^{-1}$. Indeed, in the first case one has 
$$\nu(S'') = 1/3+\frac{1}{e(S')}$$
implying $e(S'')\le \epsilon^{-1}$ and in the second case 
$$\nu(S'') = 1/3+\frac{1}{3e(S')}$$
implying $e(S'') \le (3\epsilon)^{-1}\le \epsilon^{-1}.$

Consider now that case when $b_2(S')>0$. Then $S'$ contains a minimal cycle $S''\subset S'$. By Lemma \ref{nonasph} $\pi_2(S'')\not=0$ and we need to show that
$e(S'') \le 2\epsilon^{-1}$. Indeed, we know that $\nu(S'') \ge 1/3+\epsilon$ and $\chi(S'')\le 2$. Hence
$$\frac{1}{3} + \frac{2}{e(S'')} \ge \nu(S'') = \frac{1}{3} + \frac{3\chi(S'')+L(S'')}{3e(S'')}\ge \frac{1}{3} +\epsilon$$
implying $e(S'') \le 2\epsilon^{-1}$. 

Let us now prove the inverse implication, i.e. that the random complex $X_\Gamma$ with probability tending to 1 as $n\to \infty$ has the following property:
if a subcomplex $Y\subset X_\Gamma^{(2)}$ contains a small bubble $S\subset Y$, $\pi_2(S)\not=0$, 
$e(S)\le 2\epsilon^{-1}$, then $Y$ is not aspherical. There are finitely isomorphism types of 2-complexes $S$ with at most $2\epsilon^{-1}$ edges. Therefore, by Theorem 
\ref{embed} we may conclude that a random complex $X_\Gamma^{(2)}$ may contain as a subcomplex only the bubbles $S$, $\pi_2(S)\not=0$, $e(S)\le 2\epsilon^{-1}$satisfying $\tilde \nu(S)\ge 1/3+\epsilon$. 

If $b_2(S)>0$ then there is a minimal cycle $S'\subset S$, $\tilde\nu (S')\ge 1/3+\epsilon$. 
By Lemma \ref{nonasph} the Hurewicz map $h: \pi_2(S') \to H_2(S')$ is an epimorphism. Since $H_2(S') \to H_2(Y)$ is injective, we see that $H_2(Y)$ contains a spherical  homology class and hence $\pi_2(Y)\not=0$. 

If $b_2(S)=0$ then by Lemma \ref{b20} there is a subcomplex $K\subset S$ which is homotopy equivalent to the real projective plane $P^2$. By a Theorem of Crockfort
\cite{CC}, see also \cite{Adams}, an aspherical complex cannot contain such $K$ as a subcomplex; hence $\pi_2(Y)\not=0$, i.e. $Y$ is not aspherical. 

\end{proof}

\begin{proof}[Proof of Corollary \ref{aspherical1}] Let $X_\Gamma$ be the clique complex of a random graph $\Gamma\in G(n,p)$ where $p\ll n^{-1/3-\epsilon}$. 
By Theorem \ref{aspherical}, for any aspherical subcomplex $Y\subset X_\Gamma^{(2)}$, any subcomplex $S\subset Y$ with $e(S)\le 2\epsilon^{-1}$ is aspherical, a.a.s.
We shall also assume (using Theorem \ref{embed} and the finiteness of the set of isomorphism types of 2-complexes satisfying $e(S)\le 2\epsilon^{-1}$) that any subcomplex 
$S\subset Y\subset X_\Gamma^{(2)}$ has the property $\tilde \nu(S)>1/3$. 

We want to show that each $S\subset Y\subset X_\Gamma^{(2)}$, $e(S)\le 2\epsilon^{-1}$ is collapsible to a graph. Indeed, performing all possible simplicial collapses on $S$ we either obtain a graph or a closed 2-dimensional complex $S'$ with $e(S')\le 2\epsilon^{-1}$ and $\tilde \nu(S')>1/3$. If $b_2(S')>0$ then $S'$ contains a minimal cycle $Z\subset S'$, $\tilde \nu(Z)>1/3$ and using Lemma \ref{nonasph} we see that $S'$ is not aspherical - a contradiction. If $b_2(S')=0$ then by Lemma \ref{b20} we see that $S'$ contains a subcomplex $X\subset S'$ homotopy equivalent to $P^2$ and $S'$ is not aspherical by a theorem of Cockcroft \cite{CC}. Hence the only possibility is that $S$ is collapsible to a graph. 
\end{proof}

\section{2-torsion in fundamental groups of random clique complexes}

\begin{theorem}\label{cd2} Assume that 
\begin{eqnarray}
p \ll n^{-11/30}. 
\end{eqnarray}
Then the fundamental group $\pi_1(X_\Gamma)$ of the clique complex of a random graph $\Gamma\in G(n,p)$ has 
geometric dimension and cohomological dimension at most $2$, and in particular
$\pi_1(X_\Gamma)$ is torsion free, a.a.s.
Moreover, if $$n^{-1/2}\ll p\ll n^{-11/30}$$
then the geometric dimension and the cohomological dimension of $\pi_1(X_\Gamma)$ equal two.  
\end{theorem}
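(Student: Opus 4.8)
The plan is to combine the uniform hyperbolicity and $M$-bound of Sections~4--5 with the classification of minimal cycles, and to control $2$-torsion by a subgraph-containment argument for the minimal \emph{clean} triangulation of $P^2$, whose $\nu$-density is exactly $11/30$. First I would fix $\epsilon\in(0,1/30]$ with $p\ll n^{-1/3-\epsilon}$ and pass to the a.a.s.\ events on which everything rests: by Theorem~\ref{hyp} every subcomplex $Y\subset X_\Gamma^{(2)}$ has $I(Y)\ge c_\epsilon$; by Lemma~\ref{lm17} every such $Y$ has $M(Y)\le C_\epsilon$; and by Theorem~\ref{embed} together with the finiteness of isomorphism types, every subcomplex $S\subset X_\Gamma^{(2)}$ with $e(S)$ below any fixed bound satisfies $\tilde\nu(S)>1/3$. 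Using formula~(\ref{useful}) and $\chi(Z)\le 2$ for a minimal cycle $Z$, one sees that $\tilde\nu(Z)>1/3+\epsilon$ forces $e(Z)\le 2/\epsilon$; hence after conditioning every minimal cycle inside $X_\Gamma^{(2)}$ is bounded with $\tilde\nu>1/3$, so by Lemmas~\ref{nonasph}, \ref{lmB1}, \ref{lmB2} it is homotopy equivalent to $S^2$ or $S^1\vee S^2$ and contains a face whose boundary is null-homotopic in its complement. We may assume $X_\Gamma$ connected.

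The heart of the proof is showing $\pi_1(X_\Gamma)$ is torsion free. The minimal clean triangulation of $P^2$ has $11$ vertices, hence $30$ edges; being clean with $\chi(P^2)=1>0$ it is strictly $\nu$-balanced by Theorem~\ref{thmbalanced}, so $\tilde\nu=11/30$, while all larger clean triangulations of $P^2$ and all clean scroll-type quotients have $\tilde\nu<11/30$. As the number of $v$-vertex triangulations of $P^2$ is only exponential in $v$, the union bound of Theorem~\ref{embed} over all of them tends to $0$ when $p\ll n^{-11/30}$, so a.a.s.\ $X_\Gamma^{(2)}$ contains no clean subcomplex homotopy equivalent to $P^2$. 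Suppose now $\pi_1(X_\Gamma)$ had an element of order two. Since $\pi_1$ is hyperbolic it has finitely many conjugacy classes of finite subgroups, so --- via the linear isoperimetric inequality and the $M$-bound, as in Lemma~\ref{lmm} --- the torsion is realized by a homotopically nontrivial simplicial map from a bounded triangulation of $P^2$. Its image $S'\subset X_\Gamma^{(2)}$ is bounded with $2$-torsion in $\pi_1(S')$; replacing $S'$ by the $2$-skeleton of the clique complex of its $1$-skeleton keeps it bounded, clean, with $\tilde\nu>1/3$ and still carrying the torsion. Collapsing this complex and invoking Lemma~\ref{b20} and Cockcroft's theorem (exactly as in the proof of Corollary~\ref{aspherical1}) yields a subcomplex homotopy equivalent to $P^2$; one more promotion to a clique-complex $2$-skeleton gives a clean subcomplex of $X_\Gamma^{(2)}$ homotopy equivalent to $P^2$ --- contradicting the previous sentence. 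Hence $\pi_1(X_\Gamma)$ is torsion free.

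To finish the upper bound I would extract an explicit $2$-dimensional $K(\pi_1(X_\Gamma),1)$. Starting from $X_\Gamma^{(2)}$, repeatedly choose a minimal cycle (homotopy $S^2$ or $S^1\vee S^2$, with a face $\sigma$ whose boundary is contractible in the complement) and delete $\mathrm{int}(\sigma)$: this wedges off an $S^2$, preserves $\pi_1$, and drops $b_2$ by one, as in the proof of Corollary~\ref{freeproduct}. After finitely many steps $X_\Gamma^{(2)}\simeq Z\vee\bigvee_\alpha S^2$ with $Z$ a subcomplex containing the $1$-skeleton, $\pi_1(Z)\cong\pi_1(X_\Gamma)$, and $b_2(Z)=0$. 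Then $Z$ is aspherical: $M(Z)\le C_\epsilon$, so non-asphericity would be witnessed by a bounded bubble $S\subset Z$; by the classification $S$ contains either a minimal cycle (impossible since $b_2(Z)=0$ and $2$-cycles inject) or a subcomplex $K\simeq P^2$, but then $\pi_1(K)\to\pi_1(Z)$ is not injective (torsion freeness), so the generator of $\pi_1(K)$ bounds a bounded simplicial disc $D\subset Z$ (using $I(Z)\ge c_\epsilon$), and $K\cup D\simeq S^2$ is a bounded subcomplex of $Z$ with nonzero class in $H_2(Z;\Z)$, again contradicting $b_2(Z)=0$. Thus $\gd(\pi_1(X_\Gamma))=\cd(\pi_1(X_\Gamma))\le 2$, and torsion freeness re-follows. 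For the ``Moreover'': both invariants are now $\le 2$, and $\cd\ge 2$ is equivalent to $\pi_1$ not being free (Stallings--Swan); when $n^{-1/2}\ll p$ the group $\pi_1(X_\Gamma)$ is nontrivial \cite{Babson} and has Kazhdan's property~$(T)$ by Kahle's theorem \cite{Kahle3}, hence is not free, giving $\gd(\pi_1(X_\Gamma))=\cd(\pi_1(X_\Gamma))=2$.

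I expect the main obstacle to be the reduction ``$2$-torsion $\Rightarrow$ a bounded clean $P^2$-subcomplex'': one must check that realizing the torsion on a bounded $P^2$, then passing to clique-complex $2$-skeleta (to gain cleanness) and collapsing (to apply Lemma~\ref{b20}), neither destroys the torsion nor leaves the bounded range. The parallel homological bookkeeping in the asphericity step (that $K\cup D$ genuinely carries a nonzero class in $H_2(Z)$) and the combinatorial input that the minimal clean triangulation of $P^2$ has exactly $11$ vertices --- which is what pins the exponent $11/30$ --- are the other points where the real work lies.
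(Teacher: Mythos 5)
Your overall architecture matches the paper's: fix $\epsilon=1/30$, condition on the a.a.s.\ events from Theorems~\ref{hyp} and \ref{embed}, use the classification of minimal cycles to delete a $2$-cell from each small minimal cycle and obtain a subcomplex $Z\subset X_\Gamma^{(2)}$ with $\pi_1(Z)\cong\pi_1(X_\Gamma)$ and $b_2(Z)=0$, and detect that the exponent $11/30$ comes from the Hartsfield--Ringel fact that the minimal clean triangulation of $P^2$ has $30$ edges. The ``Moreover'' direction via Kahle's property~$(T)$ and Swan's theorem is exactly the paper's. These parts are correct.

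However, the way you prove torsion-freeness diverges from the paper and contains a genuine gap, which you yourself flag. You try to show directly that $2$-torsion would force a \emph{clean} $P^2$ subcomplex of $X_\Gamma^{(2)}$ by ``realizing the torsion on a bounded $P^2$, then passing to clique-complex $2$-skeleta (to gain cleanness) and collapsing.'' Two problems. First, the bounded-witness-for-torsion step is the $N_2$-bound, not the $M$-bound: Lemma~\ref{lmm} controls homotopically nontrivial maps from spheres, whereas realizing $2$-torsion needs a bound on the smallest Moore surface $M(\Z_2,1)=P^2$, and the paper's Lemma~\ref{lmc} (from \cite{CF1}) is stated and used only for odd primes. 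Second, even granting a bounded $K\simeq P^2$ inside a subcomplex with injective $\pi_1(K)\hookrightarrow\pi_1(X_\Gamma)$, your ``clean up, then collapse, then extract a $P^2$, then clean up again'' loop is not shown to terminate at a clean $P^2$; after the first cleaning the complex need not be a pseudosurface, and after extracting a $P^2$-piece via Lemma~\ref{b20} you are back where you started with a non-clean $K$. In fact the whole point of the paper's argument is that this promotion \emph{cannot} be performed without killing the torsion: since $e(K)<30$, $K$ is not clean, so it has a non-filled $3$-cycle which, by an inductive splitting argument, can be taken noncontractible in $K$; in the clique complex $X_\Gamma$ that $3$-cycle bounds a triangle, so $\pi_1(K)\to\pi_1(X_\Gamma)$ is already trivial. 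The paper then uses uniform hyperbolicity to replace this triangle by a small disc inside $Y_\Gamma$ and derives a contradiction with Lemma~\ref{b20}. This sidesteps the promotion you attempt and makes the separate torsion-freeness step unnecessary: asphericity of $Y_\Gamma$ is proved directly, and torsion-freeness follows for free from $\cd\le 2$. Your later asphericity argument for $Z$ is essentially the paper's, but it invokes the (gappily proved) torsion-freeness; the paper avoids that circularity by running the cleanness contradiction inside the asphericity proof itself.
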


\begin{theorem}\label{2torsion}
Assume that 
\begin{eqnarray}
n^{-11/30}\ll p \ll n^{-1/3-\epsilon}
\end{eqnarray}
where $0<\epsilon < 1/30$ is fixed. Then the fundamental group $\pi_1(X_\Gamma)$ has 2-torsion and its cohomological dimension is infinite, a.a.s.
\end{theorem}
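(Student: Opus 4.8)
The plan is to produce, a.a.s., a subcomplex of $X_\Gamma$ isomorphic to the minimal flag triangulation of the projective plane whose ``core'' loop stays essential in $X_\Gamma$; this furnishes an element of order $2$ in $\pi_1(X_\Gamma)$, which immediately forces $\cd=\gd=\infty$. Let $\Pi$ be a clean triangulation of $P^2$ with $v(\Pi)=11$, $e(\Pi)=30$, $f(\Pi)=20$, so that $\nu(\Pi)=11/30$. Since $\chi(P^2)=1>0$, Theorem \ref{thmbalanced} shows $\Pi$ is strictly $\nu$-balanced, hence $\tilde\nu(\Pi)=\nu(\Pi)=11/30$; and since $\Pi$ is a flag complex every $3$-cycle of $\Pi^{(1)}$ bounds a face, so the combinatorial systole of $\Pi$ is at least $4$. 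Fix a shortest non-contractible simple simplicial loop $\gamma_0\subset\Pi^{(1)}$; it has length $\ell$ with $4\le\ell\le 11$, it is a chordless cycle in $\Pi^{(1)}$, and it represents the generator of $\pi_1(\Pi)=\Z_2$. Because $p\gg n^{-11/30}=n^{-\tilde\nu(\Pi^{(1)})}$, Theorem \ref{embed}(B) shows $X_\Gamma$ contains a copy of $\Pi$ a.a.s.; more precisely, writing $M$ for the number of such copies, the graph $\Pi^{(1)}$ being strictly balanced gives $\E[M]\asymp n^{11}p^{30}\to\infty$ and $\mathrm{Var}(M)=o(\E[M]^2)$ (see \cite{JLR}), so a.a.s. $M\ge\tfrac12\E[M]$.

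Next I would bound the number $M_0$ of these copies whose core is null-homotopic in $X_\Gamma$. By Theorem \ref{hyp}, since $p\ll n^{-1/3-\epsilon}$, a.a.s. $I(X_\Gamma)\ge c_\epsilon>0$; hence any null-homotopic simplicial loop of length $\le\ell$ bounds a simplicial van Kampen diagram with at most $K_\epsilon:=\lceil c_\epsilon^{-1}\ell\rceil$ triangles, whose image is a subcomplex $W\subset X_\Gamma$ with $\Pi\subset W$, $e(W)\le 30+3K_\epsilon$, and $\gamma_0$ null-homotopic in $W$. There are only finitely many isomorphism types of such ``bad extensions'' $W_1,\dots,W_m$, and $M_0\le\sum_j N_j$, where $N_j$ counts copies of $W_j$ in $X_\Gamma$ carrying a marked sub-copy of $\Pi$, so that $\E[N_j]\asymp n^{v(W_j)}p^{e(W_j)}=\E[M]\cdot n^{a_j}p^{b_j}$ with $a_j=v(W_j)-11\ge 0$ and $b_j=e(W_j)-30\ge 1$ (one new edge is needed: adding only faces on the $11$ vertices of $\Pi$ cannot kill $\gamma_0$, since every new triangle has a contractible $3$-cycle boundary). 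The crucial point, and the place where the value $11/30$ is forced, is the estimate $b_j\ge 3a_j+1$: filling the chordless loop $\gamma_0$ (of length $\ell\ge4$) by a triangulated disc with $v_i$ interior vertices produces exactly $\ell+3v_i-3\ge 3v_i+1$ interior edges, none of which can already lie in $\Pi$ because $\gamma_0$ has no chords in $\Pi$, and a similar count handles the general (possibly non-embedded) filling after passing to a reduced diagram. Since $\alpha<-1/3$ (so $np^3=n^{1+3\alpha}\to0$), this yields $n^{a_j}p^{b_j}\le (np^3)^{a_j}\cdot p\to0$ for each $j$, whence $\E[M_0]=o(\E[M])$ and, by Markov's inequality, a.a.s. $M_0<\tfrac12\E[M]\le M$.

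Consequently, a.a.s. there is a copy $\Pi\subset X_\Gamma$ (a clean $P^2$) whose core $\gamma_0$ is not null-homotopic in $X_\Gamma$. The image $g\in\pi_1(X_\Gamma)$ of the generator $[\gamma_0]$ of $\pi_1(\Pi)=\Z_2$ then satisfies $g\ne1$ while $g^2=1$, so $g$ has order exactly $2$, and $\langle g\rangle\cong\Z_2$ is a subgroup of $\pi_1(X_\Gamma)$. Since a group of finite cohomological dimension is torsion free and $\cd$ is monotone under subgroups, $\cd(\pi_1(X_\Gamma))=\infty$, and therefore $\gd(\pi_1(X_\Gamma))=\infty$ as well, a.a.s. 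This proves Theorem \ref{2torsion}.

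The main obstacle is the uniform inequality $b_j\ge 3a_j+1$ for every isomorphism type of bad extension: one must rule out that a bounded subcomplex of $X_\Gamma$ containing a clean copy of $P^2$ kills the core of that copy ``cheaply'', i.e. using fewer than three new edges per new vertex. For embedded disc fillings this is immediate from formula (\ref{disc}) together with the systole bound $\ell\ge4$ (equivalently, from $e(\Pi)=3v(\Pi)-3$, which degrades only favorably when one glues a disc, or more generally a punctured surface, along a chordless cycle). The delicate cases are folded or otherwise degenerate van Kampen diagrams, where one reduces to a reduced diagram and must argue that identifications among its edges and vertices cannot pull the edge-to-vertex ratio of the new part below $3$. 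A secondary ingredient is the combinatorial input that the minimal clean triangulation of $P^2$ has $11$ vertices and $30$ edges with systole $\ge4$ — which is exactly what pins the threshold at $n^{-11/30}$ and, via $\epsilon<1/30$, keeps the regime $n^{-11/30}\ll p\ll n^{-1/3-\epsilon}$ nonempty.
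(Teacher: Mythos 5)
Your high-level strategy (count clean triangulations $\Pi$ of $P^2$ in $X_\Gamma$, count the ones whose core becomes null-homotopic, and show the bad ones are strictly fewer by a first--second moment argument combined with uniform hyperbolicity) is essentially the same as the paper's: the paper's Theorem~\ref{family} is exactly your $T_1<T_2$ comparison, its Theorem~\ref{ordertwo} plays the role of your main lemma, and the boundedness of the ``bad extensions'' again comes from Theorem~\ref{hyp}. The genuine issue is the key combinatorial inequality $b_j\ge 3a_j+1$ (or even the weaker $b_j\ge 3a_j$, which is all you actually need, since $a_j=b_j=0$ is excluded by flagness and $np^3\to 0$, $p\to 0$ already kill $(np^3)^{a_j}p^{b_j-3a_j}$ once $b_j\ge 3a_j$ and $(a_j,b_j)\neq(0,0)$). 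You prove this only for an embedded, non-degenerate disc filling whose interior is disjoint from $\Pi$, and you yourself flag the folded/degenerate case as an open obstacle. That case is not a technicality: identifications in a reduced van~Kampen diagram can simultaneously reduce the number of new vertices and new edges, and it is not obvious the ratio stays above $3$; your remark that ``a similar count handles the general filling after passing to a reduced diagram'' is exactly what needs to be shown and is not shown. This is where the paper's argument genuinely earns its keep.

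The paper avoids your diagram-by-diagram combinatorics entirely by working instead with the subcomplex $X\subset X_\Gamma$ generated by the filling (that is, your $W_j$) and appealing to the classification of minimal cycles (Lemmas~\ref{nonasph}, \ref{lmB1}, \ref{lmB2}): using $b_2(X)\geq 1$ (from the exact sequence of the pair $(X,S)$), one locates a minimal cycle $Z\subset X$, rules out type~A using the minimality property (c), and deduces $Z'=S$, hence $X=Z$ is a minimal cycle of type~B. This forces $\chi(X)=2$ and $L(X)\leq -3$. Formula~(\ref{24}) then gives
\[
\nu(X,S)=\frac{1}{3}+\frac{3(\chi(X)-\chi(S))+L(X)-L(S)}{3(e(X)-e(S))}=\frac{1}{3}+\frac{3+L(X)}{3(e(X)-e(S))}\leq\frac{1}{3},
\]
which is precisely $b_j\geq 3a_j$, uniformly over all bad extensions regardless of how folded or degenerate the filling map is. You should replace your van~Kampen counting lemma by this structural argument (or prove your counting lemma in full, which would essentially amount to re-deriving Lemmas~\ref{lmB1}--\ref{lmB2} by hand). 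As written, your proof has a real gap at its central step, even though the surrounding probabilistic machinery is correct and matches the paper.

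One smaller remark: you do not need the systole bound $\ell\geq 4$ or the strict inequality $b_j\geq 3a_j+1$. Since $p\ll n^{-1/3-\epsilon}\ll n^{-1/3}$, the non-strict bound $\nu(W_j,\Pi)\leq 1/3$ (i.e.\ $b_j\geq 3a_j$) together with $(a_j,b_j)\neq(0,0)$ already makes $\E[N_j]/\E[M]\to 0$. The paper's computation allows $L(X)=-3$ for exactly this reason.
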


\begin{proof}[Proof of Theorem \ref{cd2}] Consider the set ${\mathcal C}_{60}$ of isomorphism types of simplicial complexes having at most 
$$60+\frac{3}{2}(3c^{-1}_\epsilon-1)$$
edges, where $c_\epsilon>0$ is the constant given by Theorem \ref{hyp} for $\epsilon= 1/30$. 
This set is clearly finite. 
For any $n$, consider the set $\mathcal X_n$ of graphs $\Gamma\in G(n,p)$ such that the corresponding clique complex $X_\Gamma$ does not contain as a subcomplex complexes
$S\in {\mathcal C}_{60}$ satisfying $\nu(S) \le 11/30$ and such that for any subcomplex $Y\subset X_\Gamma$ one has $I(Y)\ge c_\epsilon$. 
From Theorem \ref{embed} and Theorem \ref{hyp} we know that, under our assumption $p\ll n^{-11/30}$, the probability of this set 
$\mathcal X_n$ of graphs tends to one as $n\to \infty$. 

To prove the first part of Theorem \ref{cd2} we shall construct, for any $\Gamma\in \mathcal X_n$, a subcomplex $Y_\Gamma\subset X_\Gamma^{(2)}$ which is aspherical 
$\pi_2(Y_\Gamma)=0$ and has the same fundamental group, $\pi_1(Y_\Gamma) = \pi_1(X_\Gamma)$. The existence of such $Y_\Gamma$ implies that 
$$\gd(\pi_1(X_\Gamma)) =\cd(\pi_1(X_\Gamma))\le 2.$$ 
Here we use the results of Eilenberg and Ganea \cite{EG} in conjunction with the theorem of Swan \cite{Sw} stating that a group of cohomological dimension one is a free group. 

The equality $\cd(\pi_1(X_\Gamma))=2$ under the assumptions $n^{-1/2}\ll p\ll n^{-11/30}$ follows from the result of \cite{Kahle3}, Theorem 1.2 which states that for 
$$p\ge \left(\frac{(3/2+\epsilon)\log n}{n}\right)^{1/2}$$ 
the fundamental group $\pi_1(X_\Gamma)$ has property T (a.a.s.) implying $\cd(\pi_1(X_\Gamma))>1$.

Consider the minimal cycles $Z\in \mathcal C_{60}$ and their all possible embeddings $Z\subset X_\Gamma$ where $\Gamma\in \mathcal X_n$. 
By Lemma \ref{nonasph} each such 
$Z$ contains a 2-simplex $\sigma$ such that $\partial \sigma$ is null-homotopic in $Z-\int(\sigma)$. We remove subsequently one such 2-simplex from each of the 
minimal cycles $Z\subset X_\Gamma$. The union of the 1-skeleton of $X_\Gamma$ and the remaining 2-simplexes is a 2-complex which we denote by $Y_\Gamma$. 
Clearly $\pi_1(Y_\Gamma)=\pi_1(X_\Gamma)$. To show that $Y_\Gamma$ is aspherical we shall apply Theorem \ref{aspherical}. We need to show that any subcomplex 
$S\subset Y_\Gamma$, where $S\in \mathcal C_{60}$, is aspherical. 
By the above construction we know that $S\subset Y_\Gamma$ cannot contain minimal cycles, and therefore $b_2(S)=0$. 
Without loss of generality we may assume that $S$ is closed, pure and strongly connected; then Lemma \ref{b20} implies that $S$ must contain a triangulation of the projective plane or its quotient 
with two adjacent edges are identified. 

We know that for any triangulation $S$ of $P^2$ one has
$$\tilde \nu(S) =\nu(S) = 1/3+ 1/e(S).$$
We obtain that only triangulations $S$ of $P^2$ having less than $30$ edges, $e(S)<30$, are embeddable into $X_\Gamma$ where $\Gamma\in \mathcal X_n$. 

Recall that a triangulation of a 2-complex is called {\it clean} if for any clique of three vertices $\{v_0, v_1, v_2\}$ the complex contains also the simplex $(v_0v_1v_2)$. 
We shall use the following fact: any clean triangulation of the projective plane $P^2$ contains at least $11$ vertices and $30$ edges, see \cite{HR}. The minimal clean triangulation is shown in Figure \ref{cleanp2}; the antipodal points of the circle must be identified. 
\begin{figure}[h]
\centering
\includegraphics[width=0.4\textwidth]{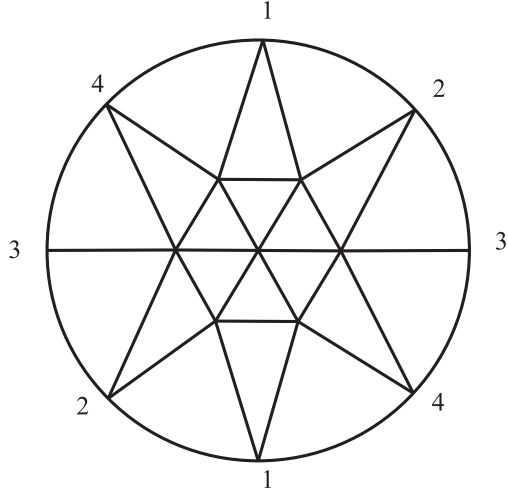}
\caption{The minimal clean triangulation of $P^2$, according to \cite{HR}.}\label{cleanp2}
\end{figure}
Any triangulation $S$ of $P^2$ containing less than $30$ edges is not clean, i.e. it contains a cycle of length 3 which is not filled by a triangle. If this cycle is null-homologous than we may 
split $S$ into two smaller surfaces one of which is a disk and another is a projective plane with smaller number of edges. Continuing by induction, we obtain that for any triangulation $S$ of $P^2$ containing less than $30$ edges there is a cycle of length 3 representing a non-contractible loop in $S$. 

We claim that $Y_\Gamma$ contains no subcomplexes $S$ with $e(S)\le 60$ which are triangulations of $P^2$. Indeed, if $S$ is embedded into $Y_\Gamma$, where $\Gamma\in \mathcal X_n$, then a nontrivial cycle of $S$ bounds a triangle in 
$X_\Gamma$. In particular, the inclusion $S\to X_\Gamma$ induces a trivial homomorphism of 
the fundamental groups $\pi_1(S) \to \pi_1(X_\Gamma)$. Since the inclusion induces an isomorphism $\pi_1(Y_\Gamma) \to \pi_1(X_\Gamma)$ we obtain that 
the inclusion $S\subset Y_\Gamma$ also induces a trivial homomorphism $\pi_1(S) \to \pi_1(Y_\Gamma)$ however now the length 3 cycle of $S$ may bound a larger disc 
and not a simple 2-simplex. We may apply Theorem \ref{hyp} about uniform hyperbolicity to estimate the size of the minimal bounding disc for this cycle. Since 
$I(Y_\Gamma) \ge c_\epsilon$ where $\epsilon = 1/30$, we see that the area of the bounding disc is $\le 3c_\epsilon^{-1}$. We obtain that there exists a subcomplex 
$S\subset L\subset Y_\Gamma$ such that $\pi_1(S) \to \pi_1(L)$ is trivial and 
\begin{eqnarray}\label{bound}e(L) \le 60+ \frac{3}{2}(3c_\epsilon^{-1} -1).\end{eqnarray}
Since $\Gamma\in \mathcal X_n$ we see that $\tilde \nu(L)>1/3$. By construction, $L$ (as well as $Y_\Gamma$) may not contain minimal cycles
since any minimal cycle $Z$ satisfying $\tilde \nu(Z)>11/30$ must have at most $60$ edges; therefore $b_2(L)=0$. 
We may assume that $L$ is strongly connected and pure. 
Then by Lemma \ref{b20} we see that each strongly connected pure component of $L$ must be isomorphic either to the projective plane or to its quotient, and in both cases we obtain a contradiction to the homomorphism $\pi_1(S) \to \pi_1(L)$ being trivial.

Similarly, one shows that $Y_\Gamma$ contains no subcomplexes $S'$ isomorphic to the quotients of a triangulation of $P^2$ with two adjacent edges identified and with
$e(S') \le 60$. One has 
$$\tilde \nu(S')=\nu(S') = \frac{1}{3} + \frac{1}{3e(S')}$$
and for $\Gamma\in \mathcal X_n$ we shall find subcomplexes $S'\subset X_\Gamma$ only if $e(S') <10$. 
Thus, using the result of \cite{HR}, we obtain that that if $S'$ is embedded into $X_\Gamma$, where $\Gamma\in \mathcal X_n$, then 
there is a cycle of length 3 in $S'$ which is not null-homotopic in $S'$; this cycle bounds a triangle in $X_\Gamma$ and as a result the inclusion $S'\to X_\Gamma$ induces a trivial homomorphism of 
the fundamental groups $\pi_1(S') \to \pi_1(X_\Gamma)$. Repeating the arguments of the preceding paragraph we find a subcomplex $S'\subset L \subset Y_\Gamma$ such that $\pi_1(S') \to \pi_1(L)$ is trivial and $L$ satisfies (\ref{bound}). As above we find that $\tilde \nu(L) >1/3$, $b_2(L)=0$ and therefore $L$ is an iterated wedge
of projective planes or projective planes with two adjacent edges identified; this contradicts the fact that $\pi_1(S')\to \pi_1(L)$ is trivial. 

%
%

\end{proof}

\subsection{Proof of Theorem \ref{2torsion}}

\subsubsection{The number of combinatorial embeddings}

Consider two 2-complexes $S_1\supset S_2$. Denote by $v_i$ and $e_i$ the numbers of vertices and faces of $S_i$. We have $v_1\ge v_2$ and $e_1\ge e_2$.
We will assume that $e_1>e_2$. 

Let $\nu(S_1, S_2)$ denote the ratio $$\nu(S_1, S_2)= \frac{v_1-v_2}{e_1-e_2}.$$
Clearly, $\nu(S_1, S_2)$ depends only on the 1-skeleta of $S_1$ and $S_2$, however it will be convenient to think of this quantity as 
being a function of the 2-complexes $S_1, S_2$. 
If $\nu(S_1)<\nu(S_2)$ then 
\begin{eqnarray}\label{one}\nu(S_1, S_2) <\nu(S_1)<\nu(S_2).\end{eqnarray}
If $\nu(S_1) >\nu(S_2)$ then 
\begin{eqnarray}\label{two}\nu(S_1, S_2) >\nu(S_1)>\nu(S_2).\end{eqnarray}
These two observations can be summarised by saying that {\it $\nu(S_1)$ always lies in the interval connecting $\nu(S_2)$ and $\nu(S_1, S_2)$. }
One has the following formula
\begin{eqnarray}\label{24}
\nu(S_1, S_2) = \frac{1}{3} +\frac{3(\chi(S_1)-\chi(S_2)) + L(S_1)-L(S_2)}{3(e_1-e_2)},
\end{eqnarray}
which follows from the equation $3v_i = e_i + 3\chi(S_i) +L(S_i)$; the latter is equivalent to (\ref{useful}). 

\begin{lemma}\label{less} Let $S_1$ be closed, i.e. $\partial S_1=\emptyset$, and $S_2$ be a pseudo-surface such that
$\chi(S_1) \le \chi(S_2)$. Then $\nu(S_1, S_2) <1/3$. 
\end{lemma}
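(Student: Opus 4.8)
The statement to prove is Lemma~\ref{less}: if $S_1$ is closed and $S_2$ is a pseudo-surface with $\chi(S_1)\le\chi(S_2)$, then $\nu(S_1,S_2)<1/3$. The natural approach is to unwind formula~(\ref{24}), which reduces the claim to showing that the numerator
\[
3(\chi(S_1)-\chi(S_2)) + L(S_1) - L(S_2)
\]
is strictly negative. First I would record the two things we know about the $L$-invariant from the definition $L(S)=\sum_e[2-\deg(e)]$: since $S_1$ is closed, every edge has degree $\ge 2$, so $L(S_1)\le 0$; and since $S_2$ is a pseudo-surface, every edge has degree exactly $2$, so $L(S_2)=0$. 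Plugging these in, the numerator is $3(\chi(S_1)-\chi(S_2)) + L(S_1) \le 0$ using the hypothesis $\chi(S_1)\le\chi(S_2)$.

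**Getting the strict inequality.** The above only gives $\nu(S_1,S_2)\le 1/3$, so the remaining point is to rule out equality in the numerator, i.e.\ to exclude the case $\chi(S_1)=\chi(S_2)$ \emph{and} $L(S_1)=0$ simultaneously. If $L(S_1)=0$ then every edge of $S_1$ also has degree exactly $2$, so $S_1$ is itself a closed pseudo-surface; combined with $\chi(S_1)=\chi(S_2)$ this would force the ``defect'' $v_1-v_2$ and $e_1-e_2$ to be related in a way incompatible with $S_2\subsetneq S_1$. The cleanest way to see the contradiction: from $3v_i = e_i + 3\chi(S_i) + L(S_i)$ and $L(S_1)=L(S_2)=0$, $\chi(S_1)=\chi(S_2)$, we would get $3(v_1-v_2) = e_1 - e_2$, i.e.\ $\nu(S_1,S_2)=1/3$ exactly. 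But $S_2$ is a proper pure subcomplex of the closed complex $S_1$, so $S_1$ must have an edge on $\partial S_2$ of degree $\ge 2$ in $S_1$ but degree $1$ in $S_2$ — contradicting that $S_2$ is a pseudo-surface while being a full-dimensional piece inside $S_1$... actually the cleanest route is simply: a proper pure subcomplex of a closed pseudo-surface cannot itself be a closed pseudo-surface on the same edges, and the edge count then forces $L(S_1)<0$ or $\chi(S_1)<\chi(S_2)$.

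**Where the subtlety lies.** I expect the only genuine obstacle to be pinning down exactly why the boundary case cannot occur. The rough picture is that $S_2\subsetneq S_1$ with $S_1$ closed means the frontier of $S_2$ in $S_1$ is nonempty, and edges on that frontier have degree $1$ in $S_2$ but degree $\ge 2$ in $S_1$; since a pseudo-surface has all edge-degrees equal to $2$, this frontier must be empty, hence $S_2$ is a closed subpseudo-surface of $S_1$. Then each connected component of $S_1$ either lies entirely in $S_2$ or is disjoint from it (in a suitable sense), and a Mayer–Vietoris / component-counting argument shows $\chi(S_1-S_2)$ contributes, forcing $\chi(S_1)<\chi(S_2)$ unless $S_1=S_2$, contradicting $e_1>e_2$. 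I would write this last step carefully but it is short; everything before it is a two-line substitution into~(\ref{24}).
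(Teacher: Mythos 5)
Your first paragraph is exactly the paper's proof: substitute $L(S_1)\le 0$ (closedness) and $L(S_2)=0$ (pseudo-surface) into formula (\ref{24}), use $\chi(S_1)\le\chi(S_2)$ and $e_1>e_2$; the paper's argument consists of precisely these observations. The genuine gap is in your attempt to upgrade the resulting bound to a strict one. The boundary case $\chi(S_1)=\chi(S_2)$ and $L(S_1)=0$ cannot be excluded by the frontier/component-counting argument you sketch: your key claim that it would ``force $\chi(S_1)<\chi(S_2)$ unless $S_1=S_2$'' is false, because the extra faces of $S_1$ can form closed pieces of zero Euler characteristic. Concretely, let $S_2$ be a triangulated $2$-sphere and $S_1=S_2\sqcup T$ with $T$ a triangulated torus; then $S_1$ is closed, $S_2$ is a pseudo-surface, $\chi(S_1)=\chi(S_2)=2$ and $e_1>e_2$, yet $\nu(S_1,S_2)=v(T)/e(T)=1/3$ exactly (for any triangulated torus $3f=2e$ and $\chi=0$ give $v=e/3$). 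A connected variant exists as well: attach a triangulated sphere to $S_2$ along two vertices. So the step you defer (``I would write this last step carefully but it is short'') cannot be completed as stated; strictness needs extra hypotheses (e.g.\ $\chi(S_1)<\chi(S_2)$, or $L(S_1)<0$) rather than a cleverer bookkeeping of the frontier, which in these examples is empty.

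To be fair, your instinct that strictness is the delicate point is sound: the paper's own two-line proof literally yields only $\nu(S_1,S_2)\le 1/3$ and is silent on the strict inequality, and in the place where an estimate of this type is actually exploited (the proof of Theorem \ref{ordertwo}) only the non-strict bound $\nu(X,S)\le 1/3$ is claimed and used. So the part of your argument that overlaps with the paper is correct and identical in method; the part that goes beyond it asserts something that is not true as stated.
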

\begin{proof} Since $S_1$ is closed, $L(S_1)\le 0$. Besides, $L(S_2)=0$ since $S_2$ is a pseudo-surface. The result now follows from the above formula. 
\end{proof}

\begin{theorem}\label{thm11} Let $S_1\supset S_2$ be two fixed 2-complexes 
and\footnote{The assumption (\ref{between}) is meaningful iff $\nu(S_1, S_2) < \tilde \nu(S_2) \le \nu(S_2)$ which, as follows from (\ref{one}) and (\ref{two}), 
implies that $\nu(S_1)<\nu(S_2)$. Thus, if Theorem \ref{thm11} is applicable, then $\nu(S_1)<\nu(S_2)$. }
\begin{eqnarray}\label{between}
n^{-\tilde \nu(S_2)} \ll p \ll n^{-\nu(S_1, S_2)}.
\end{eqnarray}
Then the number of embeddings of $S_1$ into the clique complex $X_\Gamma$ of a random graph $\Gamma\in G(n,p)$ is smaller than the number of embeddings of 
$S_2$ into $X_\Gamma$, a.a.s. 
In particular, under the assumptions (\ref{between}), with probability tending to one, there exists an embedding $S_2\to X_\Gamma$ which does not extend to an embedding $S_1\to X_\Gamma$. 
\end{theorem}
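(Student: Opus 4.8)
The plan is to run a routine first-and-second-moment argument comparing the number of copies of $S_1$ with that of $S_2$ inside $X_\Gamma$. Write $G_i=S_i^{(1)}$ for the $1$-skeleton and set $v_i=v(S_i)$, $e_i=e(S_i)=e(G_i)$; by hypothesis $e_1>e_2$, while $v_1\ge v_2$. Since $X_\Gamma$ is a clique complex and every edge of a $2$-simplex of $S_i$ already lies in $G_i$, a simplicial map $S_i\to X_\Gamma$ is an injective simplicial embedding if and only if its restriction to $G_i$ is an injective graph homomorphism into $\Gamma$; hence the number $N_i$ of embeddings $S_i\to X_\Gamma$ equals the number of injective homomorphisms $G_i\to\Gamma$, and
\begin{equation*}
\E[N_i]=(n)_{v_i}\,p^{e_i}=(1+o(1))\,n^{v_i}p^{e_i},
\end{equation*}
where $(n)_{v_i}=n(n-1)\cdots(n-v_i+1)$.

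First I would compare the two expectations. The hypothesis $p\ll n^{-\nu(S_1,S_2)}$ is, after raising to the positive power $e_1-e_2$, precisely the assertion $p^{\,e_1-e_2}\ll n^{-(v_1-v_2)}$, so that
\begin{equation*}
\frac{\E[N_1]}{\E[N_2]}=(1+o(1))\,n^{v_1-v_2}p^{e_1-e_2}\longrightarrow 0 .
\end{equation*}
Next I would show that $N_2$ is concentrated. Since $1/\tilde\nu(S_2)=\max_{S'\subseteq S_2}e(S')/v(S')$ is the maximal subgraph density of $G_2$, the hypothesis $p\gg n^{-\tilde\nu(S_2)}$ puts $p$ above the $G(n,p)$ appearance threshold for $G_2$ and for every one of its subgraphs; equivalently, $n^{v(H)}p^{e(H)}\to\infty$ for each subgraph $H\subseteq G_2$ containing an edge. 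This is exactly the regime in which the standard second-moment computation for subgraph counts (see \cite{JLR}, Chapter 3) yields $\mathrm{Var}(N_2)=o(\E[N_2]^2)$, and hence $N_2=(1+o(1))\E[N_2]$ a.a.s.\ by Chebyshev's inequality; in particular $\E[N_2]\to\infty$ and $N_2\ge\tfrac12\E[N_2]$ a.a.s.

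Finally I would combine the two bounds: by Markov's inequality $\PP\!\left(N_1\ge\tfrac12\E[N_2]\right)\le 2\,\E[N_1]/\E[N_2]\to 0$, so with probability tending to $1$ one has simultaneously $N_1<\tfrac12\E[N_2]\le N_2$, which proves $N_1<N_2$. For the last assertion, restriction along the inclusion $S_2\subset S_1$ defines a map from the set of embeddings $S_1\to X_\Gamma$ (of size $N_1$) to the set of embeddings $S_2\to X_\Gamma$ (of size $N_2$), and an embedding of $S_2$ extends to an embedding of $S_1$ exactly when it lies in the image of this map; since $N_1<N_2$ the map is not onto, so some embedding of $S_2$ does not extend. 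The only ingredient here that is not pure bookkeeping is the concentration of $N_2$, and that is entirely classical; the substance of the statement is just the observation that the prescribed range of $p$ lies at once above the appearance threshold of $S_2$ and below the ``relative'' threshold determined by $\nu(S_1,S_2)$.
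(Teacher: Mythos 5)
Your proposal is correct and follows essentially the same route as the paper: compare $\E[N_1]$ and $\E[N_2]$ via $p\ll n^{-\nu(S_1,S_2)}$, concentrate $N_2$ by a second-moment argument valid because $p\gg n^{-\tilde\nu(S_2)}$, and combine Markov and Chebyshev; the paper merely splits the gap $\E(T_2)-\E(T_1)$ at the geometric mean $\sqrt{\E(T_1)\E(T_2)}$ rather than at $\tfrac12\E[N_2]$, which is an immaterial difference.
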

\begin{proof}
Let $T_i: G(n, p)\to \Z$ be the random variable counting the number of embeddings of $S_i$ into $X_\Gamma$, $i=1,2$ (where by an embedding we understand a simplicial injective map $S_i\to X_\Gamma$). We know that 
$$\E(T_i)=\binom n {v_i} v_i! p^{e_i} \sim n^{v_i}p^{e_i}.$$ 
Our goal is to show that $T_1<T_2$, a.a.s. We have 
$$\frac{\E(T_1)}{\E(T_2)} \sim n^{v_1-v_2}p^{e_1-e_2} = \left[ n^{\nu(S_1,S_2)}p\right]^{e_1-e_2}\to 0$$
tends to zero, under our assumption (\ref{between}) (right). 

Find $t_1, t_2 >0$ such that 
$$t_1+t_2=\E(T_2)-\E(T_1)$$ and $\E(T_1)/t_1\to 0$ while $\E(T_2)/t_2$ is bounded. 
Then 
$$P(T_1<T_2) \ge 1- P(T_1> \E(T_1) +t_1) - P(T_2< \E(T_2) -t_2).$$
By Markov's inequality 
$$P(T_1>\E(T_1)+t_1) < \frac{\E(T_1)}{\E(T_1)+t_1}= \frac{\frac{\E(T_1)}{t_1}}{1+ \frac{\E(T_1)}{t_1}}\to 0$$
while by Chebyschev's inequality
$$P(T_2< \E(T_2)-t_2) < \frac{{\rm {Var(T_2)}}}{t_2^2}.$$
It is known (see \cite{CCFK}, proof of Theorem 15) that under our assumptions (\ref{between}) the ratio 
$\frac{{\rm {Var(T_2)}}}{\E(T_2)^2}$ tends to zero.

We take $t_1=\sqrt{\E(T_1)\E(T_2)}$ and $t_2= \E(T_2) - \E(T_1) - t_1$. Then 
$$\frac{\E(T_1)}{t_1} = \sqrt{\frac{\E(T_1)}{\E(T_2)}} \to 0$$
and 
$$\frac{\E(T_2)}{t_2} = \frac{1}{1-\frac{\E(T_1)}{\E(T_2)}- \sqrt{\frac{\E(T_1)}{\E(T_2)}}}\to 1$$
is bounded. 
\end{proof}

\begin{theorem}\label{family} Let $$S_{ j}\supset S, \quad j=1, \dots, N,$$ be a finite family of 2-complexes containing a 
given 2-complex $S$ and satisfying  $\nu(S_{ j})<\nu(S)$. Assume that 
\begin{eqnarray}\label{between2}
n^{-\tilde \nu(S)} \ll p \ll n^{-\nu(S_{j}, S)}, \quad \mbox{for any} \quad j=1, \dots, N.
\end{eqnarray}
Then,
with probability tending to one, for the clique complex $X_\Gamma$ of a random graph $\Gamma\in G(n, p)$ there exists an embedding $S\to X_\Gamma$ which does not extend to an embedding $S_{j}\to X_\Gamma$, for any $j=1, \dots, N$. 
\end{theorem}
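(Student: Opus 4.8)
The plan is to mimic the proof of Theorem~\ref{thm11}, replacing the single larger complex $S_1$ by the finite family $\{S_j\}_{j=1}^N$; the fact that $N$ is fixed (independent of $n$) is what makes this harmless. Let $T\colon G(n,p)\to\Z$ be the random variable counting simplicial embeddings $S\to X_\Gamma$, and for each $j$ let $T_j$ count simplicial embeddings $S_j\to X_\Gamma$. First I would record a purely combinatorial reduction. Since $S\subset S_j$ is a subcomplex, every embedding $S_j\to X_\Gamma$ restricts to an embedding $S\to X_\Gamma$, and an embedding $\iota\colon S\to X_\Gamma$ extends to an embedding of $S_j$ precisely when it arises as such a restriction; hence the number of embeddings of $S$ extending to an embedding of $S_j$ is at most $T_j$, and the number extending to some member of the family is at most $W:=\sum_{j=1}^N T_j$. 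Consequently, whenever $T>W$ there is an embedding $S\to X_\Gamma$ extending to no $S_j$, so it suffices to prove $T>W$ a.a.s.

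For the expectations one has $\E(T)\sim n^{v(S)}p^{e(S)}$ and $\E(T_j)\sim n^{v(S_j)}p^{e(S_j)}$, so that
$$\frac{\E(T_j)}{\E(T)}\ \sim\ n^{v(S_j)-v(S)}\,p^{e(S_j)-e(S)}\ =\ \bigl[n^{\nu(S_j,S)}p\bigr]^{e(S_j)-e(S)}\ \longrightarrow\ 0$$
by the right-hand inequality of (\ref{between2}); here $e(S_j)>e(S)$ because $S\subset S_j$ together with $\nu(S_j)<\nu(S)$ forces a strict increase in the number of edges. Since $N$ is fixed this gives $\E(W)=\sum_{j=1}^N\E(T_j)\ll\E(T)$.

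Next I would run the Markov/Chebyshev scheme exactly as in Theorem~\ref{thm11}. Put $t_1=\sqrt{\E(W)\E(T)}$ and $t_2=\E(T)-\E(W)-t_1$, both positive for $n$ large, so that
$$P(T>W)\ \ge\ 1-P\bigl(W>\E(W)+t_1\bigr)-P\bigl(T<\E(T)-t_2\bigr).$$
Markov's inequality bounds the first subtracted probability by $\E(W)/(\E(W)+t_1)\to 0$, since $\E(W)/t_1=\sqrt{\E(W)/\E(T)}\to 0$. Chebyshev's inequality bounds the second by $\mathrm{Var}(T)/t_2^2$, and $\mathrm{Var}(T)/\E(T)^2\to 0$ by the second-moment estimate of \cite{CCFK} (proof of Theorem~15), which is available precisely because the left-hand inequality of (\ref{between2}) reads $p\gg n^{-\tilde\nu(S)}$; since $t_2/\E(T)\to 1$ this term also tends to $0$. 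Therefore $T>W$ a.a.s.\ (in particular $T\ge 1$), which completes the argument.

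I do not expect a genuine obstacle: the statement is a routine extension of Theorem~\ref{thm11}, and the only point requiring care is the combinatorial bookkeeping of the first step --- one must read ``extends'' as ``is the restriction of an embedding'' and note that any overcounting (several embeddings of $S_j$, or of distinct members of the family, restricting to the same embedding of $S$) only strengthens the inequality, so $W$ is a legitimate upper bound for the number of ``bad'' embeddings of $S$. The analytic heart --- the expectation comparison and the imported variance bound for $T$ --- is identical to the two-complex case.
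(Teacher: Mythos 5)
Your proposal is correct and follows essentially the same route as the paper: the paper also compares the count of embeddings of $S$ against the sum of the counts for the $S_j$ via the expectation ratio driven by the right-hand inequality of (\ref{between2}), uses the same choice $t_1=\sqrt{\E(W)\E(T)}$, $t_2=\E(T)-\E(W)-t_1$ with Markov and Chebyshev (the variance bound imported from \cite{CCFK}), and concludes from the restriction argument that some embedding of $S$ extends to no $S_j$. Your explicit remark that overcounting in $W$ only helps, and that $e(S_j)>e(S)$ is forced, just spells out details the paper leaves implicit.
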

\begin{proof} Let $T_{1, j}: G(n, p) \to \Z$ denote the random variable counting the number of embeddings of $S_j$ into $X_\Gamma$. Denote 
$T_1= \sum_{j=1}^N T_{1, j}.$ Besides, Let $T_2: G(n, p)\to \Z$ denote the number of embeddings of $S$ into a random clique complex $X_\Gamma$. 
One has 
$$\frac{\E(T_1)}{\E(T_2)} = \sum_{j=1}^N \frac{\E(T_{1, j})}{\E(T_2)} \to 0$$
thanks to our assumption (\ref{between2}) (right). Taking 
$t_1= \sqrt{\E(T_1)\E(T_2)}$ and $t_2=\E(T_2)-\E(T_1)-t_1$ (as in the proof of the previous theorem) one has 
$t_1+t_2=\E(T_2) - \E(T_1)$ and $\E(T_1)/t_1\to 0$ while $\E(T_2)/t_2$ is bounded. Repeating the arguments used in the proof of the previous theorem we see that 
$T_1>T_2$ a.a.s. Since every embedding $S_j\to X_\Gamma$ determines (by restriction) an embedding $S\to X_\Gamma$, the inequality $T_1(\Gamma)>T_2(\Gamma)$ implies that there there are embeddings $S\to X_\Gamma$ which admit no extensions to $S_j\to X_\Gamma$, for any $j=1, \dots, N$. 
\end{proof}

\subsection{Projective planes in clique complexes of random graphs}

Recall that a connected subcomplex $S\subset X$ is said to be {\it essential} if the induced homomorphism $\pi_1(S) \to \pi_1(X)$ is injective.

\begin{theorem}\label{ordertwo} Let $S$ be a clean triangulation of the real projective plane $P^2$ having 11 vertices, 30 edges and 20 faces. Assume that 
$0<\epsilon < 1/30$ and 
$$n^{-11/30}\ll p\ll n^{-1/3 - \epsilon}.$$ Then the clique complex $X_\Gamma$ of a random graph $\Gamma\in G(n, p)$ contains $S$ as an essential 
subcomplex, a.a.s. In particular, the fundamental group 
$\pi_1(X_\Gamma)$ has an element of order two and hence its cohomological dimension is infinite. 
\end{theorem}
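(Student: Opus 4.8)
The plan is to exhibit, a.a.s., a simplicial embedding $f\colon S\to X_\Gamma$ which is \emph{essential}, i.e.\ for which $f_\ast\colon \pi_1(S)=\Z_2\to\pi_1(X_\Gamma)$ is injective. Since $\pi_1(S)\cong\Z_2$ is generated by the class of any simple noncontractible simplicial loop $\gamma\subset S$, essentiality of $f$ is equivalent to the assertion that $f(\gamma)$ is \emph{not} null-homotopic in $X_\Gamma$. Fix such a loop $\gamma$; it has length $|\gamma|\le e(S)=30$ (in fact $|\gamma|\ge 4$, since a clean triangulation has no noncontractible $3$-cycle, but only the crude bound will be used). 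First, $S$ actually embeds: because $S$ is a clean triangulation of $P^2$ and $\chi(P^2)=1>0$, Theorem~\ref{thmbalanced} shows $S$ is strictly $\nu$-balanced, whence $\tilde\nu(S)=\nu(S)=v(S)/e(S)=11/30$; since $p\gg n^{-11/30}=n^{-\tilde\nu(S)}$, part (B) of Theorem~\ref{embed} yields a simplicial embedding $S\to X_\Gamma$, a.a.s.

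Next I would set up a finite list of obstructions and invoke Theorem~\ref{family}. Apply Theorem~\ref{hyp} with the given $\epsilon$ to get a constant $c_\epsilon>0$ with $I(Y)\ge c_\epsilon$ for every subcomplex $Y\subset X_\Gamma$, a.a.s., and fix a bound $B=B(\epsilon)\ge 30+15(3c_\epsilon^{-1}+1)$. Let $\mathcal F$ be the (finite, up to isomorphism) set of finite $2$-complexes $T$ with $T\supset S$, $e(T)\le B$, in which $\gamma$ is null-homotopic, and which arise by gluing to $S$ a simplicial disc $D$ along $\partial D=\gamma$. For such a $T$, writing $d$ for the number of interior vertices of $D$, the disc identity (\ref{disc}) gives $e(T)-e(S)=e(D)-|\gamma|=|\gamma|+3d-3$ and $v(T)-v(S)=d$, so
$$\nu(T,S)=\frac{d}{|\gamma|+3d-3}\le \frac13<\frac{11}{30}=\tilde\nu(S),$$
because $|\gamma|\ge 3$. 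As noted in the footnote to Theorem~\ref{thm11}, this already forces $\nu(T)<\nu(S)$, and moreover $p\gg n^{-\tilde\nu(S)}$ while $p\ll n^{-1/3-\epsilon}\le n^{-\nu(T,S)}$; thus the hypotheses (\ref{between2}) of Theorem~\ref{family} hold for the family $\{T\}_{T\in\mathcal F}$ over $S$. We conclude that, a.a.s., there is an embedding $f\colon S\to X_\Gamma$ which extends to no $T\in\mathcal F$.

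Finally I would show that such an $f$ is essential. Suppose $f(\gamma)$ were null-homotopic in $X_\Gamma$. Since $I(X_\Gamma)\ge c_\epsilon$, the loop $f(\gamma)$ bounds a simplicial disc $D\subset X_\Gamma$ with at most $c_\epsilon^{-1}|\gamma|$ faces, so $e(D)\le\tfrac12(3c_\epsilon^{-1}|\gamma|+|\gamma|)$ and $W:=f(S)\cup D$ is a subcomplex of $X_\Gamma$ with $e(W)\le B$ in which $f(\gamma)$ is null-homotopic. Also, as $p\ll n^{-1/3-\epsilon}$, Theorem~\ref{embed}(A) applied to the finitely many isomorphism types of $2$-complexes with at most $B$ edges shows that, a.a.s., every subcomplex of $X_\Gamma$ with at most $B$ edges has $\tilde\nu>1/3$. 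Combined with the density estimate (see the remark on the main obstacle below) one gets $\nu(W)<11/30$, so $W$ is isomorphic to a member of $\mathcal F$ extending $f(S)$ — contradicting the choice of $f$. Hence $f(\gamma)$ is not null-homotopic, $f$ is essential, and $\Z_2=\pi_1(S)$ embeds in $\pi_1(X_\Gamma)$. In particular $\pi_1(X_\Gamma)$ has an element of order $2$; since $\cd(\Z_2)=\infty$ and cohomological dimension is monotone under passage to subgroups, $\cd(\pi_1(X_\Gamma))=\gd(\pi_1(X_\Gamma))=\infty$, a.a.s.

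The main obstacle is the density control used above: one must verify that \emph{every} simplicial filling disc $D\subset X_\Gamma$ of a noncontractible loop of the embedded $P^2$ yields $\nu\bigl(f(S)\cup D\bigr)<11/30$. The computation in the second paragraph settles the case in which $\mathrm{int}(D)$ is disjoint from $f(S)$; in general $D$ may meet $f(S)$ in a complicated subcomplex, and one should instead analyse the union of $f(S)$ with the closure of $D\setminus f(S)$, bounding its $\chi$ and $L$ through formula (\ref{24}) and — when closed strongly connected pieces appear — invoking the classification of minimal cycles (Lemmas~\ref{nonasph}--\ref{lmB2}) together with Corollary~\ref{freeproduct}. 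Making the list $\mathcal F$ genuinely finite is exactly what forces the use of uniform hyperbolicity (Theorem~\ref{hyp}), which bounds the area, hence the size, of a minimal filling disc of $\gamma$.
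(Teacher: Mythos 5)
Your overall strategy is the right one and matches the paper's: embed $S$ via Theorem~\ref{embed}(B) (using $\tilde\nu(S)=11/30$), use uniform hyperbolicity (Theorem~\ref{hyp}) to bound the area of any filling disc of a noncontractible loop of $S$, build a finite forbidden family of extensions with relative density $\nu(\cdot,S)\le 1/3$, and invoke Theorem~\ref{family} to get an embedding of $S$ admitting no forbidden extension; the concluding deduction about $2$-torsion and infinite cohomological dimension is also fine. But there is a genuine gap, and you point at it yourself in your closing paragraph without closing it. Your family $\mathcal F$ consists only of complexes obtained by attaching a simplicial disc to $S$ along $\partial D=\gamma$ with interior disjoint from $S$; the computation $\nu(T,S)=d/(|\gamma|+3d-3)\le 1/3$ is correct for exactly that case and no other. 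In $X_\Gamma$, however, the image of a minimal filling disc of $f(\gamma)$ can meet $f(S)$ in an arbitrary subcomplex (and can be a non-disc quotient), so the subcomplex $W=f(S)\cup b(D^2)$ witnessing the death of $\pi_1(S)$ need not be isomorphic to any member of $\mathcal F$; the step ``so $W$ is isomorphic to a member of $\mathcal F$ extending $f(S)$'' does not follow from $\tilde\nu(W)>1/3$ or $\nu(W)<11/30$. Thus the contradiction in your essentiality argument is not established, and this is precisely the hard core of the theorem.

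The paper resolves this by defining the forbidden family abstractly rather than constructively: $\mathcal S_\epsilon$ is the set of small pure connected closed complexes $X\supset S$ in which $\pi_1(S)\to\pi_1(X)$ is trivial and which are \emph{minimal} with this property (condition (c)), with $\tilde\nu(X)>1/3+\epsilon$ and a face bound coming from Theorem~\ref{hyp}. The inequality $\nu(X,S)\le 1/3$ is then proved structurally: $b_2(X)\ge 1$ from the exact sequence of $(X,S)$, a minimal cycle $Z\subset X$ cannot be of type A (Lemma~\ref{nonasph} would let one delete a face of $Z-S$ and violate minimality), so $Z$ is of type B with its closed piece $Z'$ a projective plane which must equal $S$, and minimality forces $X=Z$; hence $\chi(X)=2$, $L(X)\le-3$, and formula~(\ref{24}) gives $\nu(X,S)\le 1/3$. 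In the essentiality step one then passes from $W=S\cup b(D^2)$ to a minimal subcomplex $S\subset X\subset W$ satisfying (a)--(e), which lies in $\mathcal S_\epsilon$ and extends the embedding, giving the contradiction. This reduction to a minimal element and the classification of minimal cycles is exactly the missing ingredient your proposal defers to its final ``main obstacle'' remark; without it the argument is incomplete.
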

\begin{proof} 
Consider the set $\mathcal S_\epsilon$ of isomorphism types of pure connected closed 2-complexes $X$ satisfying the following conditions:
\begin{enumerate}
  \item[(a)] $X$ contains $S$ as a subcomplex; 
  \item[(b)] The inclusion $S\to X$ induces a trivial homomorphism $\pi_1(S) \to \pi_1(X)$; 
  \item[(c)] For any subcomplex $S\subset X'\subset X$, $X'\not=X$, the homomorphism $\pi_1(S)\to \pi_1(X')$ is nontrivial;
  \item[(d)]  $X$ has at most $$20+4c_\epsilon^{-1}$$ faces, where $c_\epsilon$ is the constant given by Theorem \ref{hyp}. 
  \item[(e)] $\tilde \nu(X) > 1/2 +\epsilon$;
  \end{enumerate}
The set $\mathcal S_\epsilon$ is finite due to the condition (d). 

Let us show that for any $X\in {\mathcal S}_\epsilon$ one has 
\begin{eqnarray}\label{rel}
\nu(X,S) \le 1/3,
\end{eqnarray}
which is equivalent to the inequality 
\begin{eqnarray}\label{rel1}3(\chi(X)-\chi(S))+L(X)-L(S) \le 0\end{eqnarray}
by formula (\ref{24}). 
From the exact sequence of the pair $(X,S)$
$$0\to H_2(X) \to H_2(X,S) \to H_1(S)=\Z_2 \to 0$$
we see (since the middle group has no torsion) that $b_2(X)=b_2(X,S) \ge 1$. 
Let $Z$ be a minimal cycle in $X$. Let us show that the assumption that $Z$ is of type A leads to a contradiction. Clearly, $Z \not\subset  S$ and let 
$\sigma$ be a simplex of $Z-S$. 
Then $\partial \sigma$ is null-homotopic in 
$Z-\int(\sigma)$ and hence in $X-\int(\sigma)$, and therefore $\pi_1(X-\int(\sigma))\to \pi_1(X)$ is an isomorphism and $\pi_1(S) \to\pi_1(X-\int(\sigma))$ is injective
violating (c). 
Thus $Z$ is a minimal cycle of type B, i.e. there exists a proper pure and strongly connected closed subcomplex $Z' \subset Z$. 
Since $\tilde \nu(Z') >1/3$ we see (using Lemma \ref{b20}) that
$Z'$ is a triangulated $P^2$ (or quotient its quotient with two adjacent edges identified). If $Z' \not\subset S$ then we can again remove any 2-simplex of $Z'-S$ to find a contradiction with (c), similarly to the argument given above. 
Thus $Z'\subset S$ implying that $Z'=S$. However, $\pi_1(S) \to \pi_1(Z)$ is trivial (by Lemma \ref{nonasph}) and now the property (c) gives $X=Z$. 
Therefore, we obtain $b_2(X)=1.$
This also implies that $L(X)$ is either $-3, -4$ or $-5$. 
Now we apply formula (\ref{24}) with $\chi(X)=2$, $\chi(S)=1$, $L(S)=0$ and $L(X)\le -3$ to obtain (\ref{rel1}).

Applying Theorem \ref{family} we find that for 
$n^{-11/30}\ll p \ll n^{-1/3-\epsilon}$, with probability tending to 1, there exist embedding $S\to X_\Gamma$ (where $\Gamma\in G(n,p)$ is random) 
which cannot be extended to an embedding of $X\to X_\Gamma$ for any $X\in {\mathcal S}_\epsilon$. 
Let us show that any such embedding $S\subset X_\Gamma$ induces a monomorphism $\pi_1(S) \to \pi_1(X_\Gamma)$. 
%
If $S\subset X_\Gamma$ is not essential then the central cycle $\gamma$ of $S$ (of length 4) bounds in $X_\Gamma$ a simplicial disc. 
Under the assumption $p\ll n^{-1/3-\epsilon}$, using Theorem \ref{hyp}, we find that the circle $\gamma$ bounds in $X_\Gamma$ a simplicial disk $b:D^2\to X_\Gamma$ 
of area $\le 4c_\epsilon^{-1}$ where $c_\epsilon>0$ in the constant of Theorem \ref{hyp} which depends only on the value of $\epsilon$. Consider the union $Y=S\cup b(D^2)$. This is a subcomplex of 
$X_\Gamma$ satisfying properties (a), (b), (d). We may assume that $\Gamma\in G(n,p)$ is such that any subcomplex $T\subset X_\Gamma^{(2)}$ with at most $20+3c_\epsilon^{-1}$ faces satisfies $\tilde \nu(T) >1/3+\epsilon$; the set of such graphs $\Gamma\in G(n,p)$ has probability tending to one as $n\to \infty$ according to Theorem \ref{embed}. Hence, we see that 
$Y$ satisfies the property (e) as well. 
However the property (c) can be violated. In this case we find a minimal subcomplex $S\subset X\subset Y$ which satisfies all the properties (a)-(e). 
Hence, if $S\subset X_\Gamma$ is not essential then there would exist a complex $X\in {\mathcal S}_\epsilon$ such that the embedding $S\subset X_\Gamma$ extends to an embedding $X\subset X_\Gamma$ contradicting our construction. 
\end{proof}

\section{Absence of odd torsion}
In this section we prove the following statement complementing Theorems \ref{cd2} and \ref{2torsion}.

\begin{theorem}\label{oddtorsion} Let $m\ge 3$ be a fixed prime. Assume that 
\begin{eqnarray}
 p \ll n^{-1/3-\epsilon}
\end{eqnarray}
where $\epsilon >0$ is fixed. Then a random graph $\Gamma\in G(n,p)$ with probability tending to 1 has the following property: the fundamental group of any subcomplex $Y\subset X_\Gamma$ has no $m$-torsion. 
\end{theorem}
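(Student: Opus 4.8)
The plan is to reduce the question to a \emph{bounded-size} subcomplex and then invoke the classification of subcomplexes with large $\tilde\nu$. Since $m$-torsion of a fundamental group is detected on the $2$-skeleton, I would assume throughout that $Y\subset X_\Gamma^{(2)}$ is two-dimensional, and I would argue by contradiction: suppose $\pi_1(Y)$ contains an element of order $m$; as $m$ is prime it then contains an element $g$ of order exactly $m$. The goal is to manufacture a subcomplex $Y'\subset Y\subset X_\Gamma$ with a uniformly bounded number of edges whose \emph{own} fundamental group has $m$-torsion; this will contradict the fact that, by Theorem \ref{embed} together with Corollary \ref{freeproduct}, every sufficiently small subcomplex of $X_\Gamma$ has fundamental group a free product of copies of $\Z$ and $\Z_2$, which has no torsion of odd prime order.

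To produce $Y'$ I would use uniform hyperbolicity. By Theorem \ref{hyp}, a.a.s.\ every subcomplex $Y\subset X_\Gamma$ satisfies $I(Y)\ge c_\epsilon$ with $c_\epsilon>0$ depending only on $\epsilon$. The natural simplicial presentation of $\pi_1(Y)$ has relators of length $\le 3$, so a linear isoperimetric inequality with constant $c_\epsilon$ gives $\pi_1(Y)$ a hyperbolicity constant $\delta=\delta(c_\epsilon)$ depending only on $\epsilon$. Using the standard fact that a finite subgroup of a $\delta$-hyperbolic group has a bounded orbit and hence a quasi-fixed point, I would find that $g$ is conjugate to a class represented by a simplicial edge-loop $\gamma'$ in $Y^{(1)}$ of length $|\gamma'|\le \ell_\epsilon=\ell_\epsilon(c_\epsilon)$. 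Since $g$ has order $m$, the loop $(\gamma')^m$ is null-homotopic in $Y$, so $I(Y)\ge c_\epsilon$ furnishes a simplicial filling $b\colon D\to Y$ of $(\gamma')^m$ with at most $m\ell_\epsilon/c_\epsilon$ faces. Let $Y'\subset Y$ be the union of the image of $\gamma'$ and the image of $D$; then $e(Y')\le E_\epsilon$, a constant depending only on $\epsilon$ (recall $m$ is fixed). In $\pi_1(Y')$ the class $[\gamma']$ satisfies $[\gamma']^m=1$, because it bounds $b(D)\subset Y'$, while $[\gamma']$ maps onto a conjugate of $g$, an element of order $m$ in $\pi_1(Y)$; since $m$ is prime this forces $[\gamma']$ to have order exactly $m$, so $\pi_1(Y')$ has $m$-torsion.

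The last step is a counting argument. There are finitely many isomorphism types of $2$-complexes with at most $E_\epsilon$ edges, and any such type $S$ with $\tilde\nu(S)\le 1/3+\epsilon$ satisfies $p\ll n^{-1/3-\epsilon}\le n^{-\tilde\nu(S)}$, hence fails to embed in $X_\Gamma$ a.a.s.\ by Theorem \ref{embed}(A); intersecting these finitely many events, a.a.s.\ every subcomplex of $X_\Gamma$ with at most $E_\epsilon$ edges has $\tilde\nu>1/3$. Applying this to the connected $2$-complex $Y'$, Corollary \ref{freeproduct} shows that $\pi_1(Y')$ is a free product of copies of $\Z$ and $\Z_2$, in which (by the Kurosh subgroup theorem) every element of finite order is conjugate into a factor and so has order $1$ or $2$. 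This contradicts the presence of an order-$m$ element with $m\ge 3$ prime, and the theorem follows.

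I expect the main obstacle to be the passage in the middle paragraph from the \emph{uniform} isoperimetric estimate of Theorem \ref{hyp} to a \emph{uniform} bound on $e(Y')$: one must control, purely in terms of $c_\epsilon$, both the length of a shortest conjugate of a torsion element of $\pi_1(Y)$ and the area of a filling of its $m$-th power, and one must be careful that the resulting subcomplex $Y'$ of $X_\Gamma$ really carries $m$-torsion in its own fundamental group and not merely in that of the ambient $Y$.
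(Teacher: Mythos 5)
Your argument is correct in outline and reaches the theorem by the same endgame as the paper, but the key middle step is handled by a genuinely different device. The paper also starts from the uniform isoperimetric bound of Theorem \ref{hyp}, but instead of passing through $\delta$-hyperbolicity it uses the Moore-surface invariant $N_m(Y)$: a torsion element of order $m$ yields an $m$-minimal simplicial map of a triangulated Moore surface $M(\Z_m,1)\to Y$, and Lemma \ref{lmc} (quoted from \cite{CF1}) gives the uniform bound $N_m(Y)\le (6m/c_\epsilon)^2$; the finitely many possible images of such bounded Moore surfaces, which carry $\Z_m$ injectively in $\pi_1$, then form the forbidden family, ruled out exactly as you do via Corollary \ref{freeproduct} (so $\tilde\nu\le 1/3$) and Theorem \ref{embed}. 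Your route replaces that lemma by two quantitative facts of hyperbolic group theory: that a linear isoperimetric inequality with constant $c_\epsilon$ and triangular relators makes the $1$-skeleton of the universal cover $\delta(c_\epsilon)$-hyperbolic, and that a finite-order deck transformation of a $\delta$-hyperbolic geodesic space moves a quasi-centre of any orbit at most roughly $4\delta+2$, hence is conjugate to a class represented by a loop of length bounded in terms of $c_\epsilon$ alone; together with a $c_\epsilon$-controlled filling of $(\gamma')^m$ this produces your bounded witness $Y'$ whose own $\pi_1$ has $m$-torsion. Both facts are available with explicit constants (and, crucially, independent of the number of vertices of $Y$), so your proof works, but these are precisely the points that need careful quantitative citation --- the obstacle you yourself flag --- whereas the paper packages that quantitative content into the single citation of Lemma 4.7 of \cite{CF1} and never needs an explicit hyperbolicity constant. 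What your approach buys is independence from the Moore-surface machinery and a witness ($\gamma'$ plus its filling) with visibly bounded combinatorics; what the paper's buys is a shorter, self-contained reduction given \cite{CF1}, and a witness on which the $\Z_m$ is $\pi_1$-injective by construction. Your final counting step, including the observation that torsion in a free product of copies of $\Z$ and $\Z_2$ has order at most $2$, matches the paper's conclusion exactly.
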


Let $\Sigma$ be a simplicial 2-complex homeomorphic to {\it the Moore surface} 
$$M(\Z_m, 1)=S^1\cup_{f _m}e^2, \quad \mbox{where}\quad \quad m\ge 3;$$ 
it is obtained from the circle $S^1$ by attaching a 2-cell via the degree $m$ map $f_m: S^1\to S^1$, $f_m(z)=z^m$, $z\in S^1$. 
The 2-complex $\Sigma$ has a well defined circle $C\subset \Sigma$ (called {\it the singular circle}) which is the union of all edges of degree $m$; all other edges of 
$\Sigma$ have degree $2$. Clearly, the homotopy class of the singular circle generates the fundamental group $\pi_1(\Sigma)\simeq \Z_m$. 

As in \cite{CF1}, define an integer $N_m(Y)\ge 0$ associated to any connected 2-complex $Y$. If $\pi_1(Y)$ has no $m$-torsion we set $N_m(Y)=0.$
If $\pi_1(Y)$ has elements of order $m$ we consider homotopically nontrivial simplicial maps 
$\gamma: C_r \to Y$,
where $C_r$ is the simplicial circle with $r$ edges, such that
\begin{enumerate}
  \item[(a)] $\gamma^m$ is null-homotopic (as a free loop in $Y$);
  \item[(b)] $r$ is minimal: for $r'<r$ any simplicial loop $\gamma:C_{r'} \to Y$ satisfying (a) is homotopically trivial. 
  \end{enumerate} 
Any such simplicial map $\gamma:C_r \to Y$ can be extended to a simplicial map $f: \Sigma \to Y$ of a triangulation $\Sigma$ of the Moore surface, such that the singular circle $C$ of $\Sigma$ is isomorphic to $C_r$ and $f|C=\gamma$. We shall say that a simplicial map $f:\Sigma \to Y$ is {\it $m$-minimal} if 
it satisfies (a), (b) and the number of 2-simplexes in $\Sigma$ is the smallest possible. 
Now, we denote by 
$$N_m(Y)\in \Z$$ 
the number of 2-simplexes in a triangulation of the Moore surface $\Sigma$ admitting an $m$-minimal map $f: \Sigma \to Y$.


\begin{lemma}\label{lmc}
Let $Y$ be a 2-complex satisfying $I(Y)\ge c>0$. 
Let $m\ge 3$ be an odd prime. 
Then
one has 
$$N_m(Y) \le \left(\frac{6m}{c}\right)^2. $$
\end{lemma}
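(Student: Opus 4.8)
The plan is to bound $N_m(Y)$ by producing \emph{one} triangulation $\Sigma$ of the Moore surface together with a map $f\colon\Sigma\to Y$ satisfying conditions (a),(b) and having few $2$-simplexes. If $\pi_1(Y)$ has no $m$-torsion there is nothing to prove, so assume it does and let $r$ be the minimal length of a homotopically nontrivial simplicial loop $\gamma\colon C_r\to Y$ with $\gamma^m$ null-homotopic; by the definition of an $m$-minimal map the singular circle of the required $\Sigma$ must have length $r$, so $N_m(Y)$ will be controlled once $r$ is. First I would do the standard construction: fix such a minimal $\gamma$, regard $\gamma^m$ as a null-homotopic loop of length $mr$, and use $I(Y)\ge c$ to get a simplicial Van Kampen diagram for $\gamma^m$ with at most $c^{-1}mr$ faces; un-pinching its boundary identifications (which does not change the number of faces) we may take a triangulated $2$-disc $D$ with $\partial D$ a circle of length $mr$ whose image reads off $\gamma^m$. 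Splitting $\partial D$ into the $m$ consecutive arcs $\alpha_1,\dots,\alpha_m$ (each a copy of $\gamma$) and identifying them, after one barycentric subdivision of $D$ to make this identification simplicial, yields a triangulation $\Sigma$ of the Moore surface whose singular circle has length $r$, a map $f\colon\Sigma\to Y$ restricting to $\gamma$ there, and $f(\Sigma)\le 6c^{-1}mr$. Hence $N_m(Y)\le 6c^{-1}mr$, and it remains to prove $r\le 6m/c$.

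Next I would extract geometric information from the minimality of $\gamma$. Let $p_1,\dots,p_m\in\partial D$ be the endpoints of the arcs $\alpha_i$, all mapping to the basepoint of $\gamma$. For $i\ne j$ let $\delta$ be a shortest path in $D^{(1)}$ from $p_i$ to $p_j$; it cuts $D$ into two discs, one of which, $D'$, has boundary $\alpha_i\cdots\alpha_{j-1}$ followed by $\bar\delta$. Since $f$ is simplicial, $f|_{\partial D'}$ is null-homotopic, so $\mu:=f(\delta)$ is a loop at the basepoint with $[\mu]=[\gamma]^{k}$ for some $1\le k\le m-1$; as $m$ is prime, $[\gamma]^k$ has order $m$, hence $\mu$ is homotopically nontrivial and $\mu^m$ is null-homotopic. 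Minimality of $r$ then forces $|\delta|\ge|\mu|\ge r$, i.e.\ $\mathrm{dist}_D(p_i,p_j)\ge r$ for all $i\ne j$; and since each $\alpha_i$ realizes $\mathrm{dist}_D(p_i,p_{i+1})\le r$, we conclude that $\partial D$ is an ``equilateral geodesic $m$-gon'': every side is a geodesic of length exactly $r$ and every ``diagonal'' $\mathrm{dist}_D(p_i,p_j)$ is $\ge r$. (This is where the primality of $m$ is essential — the same feature responsible for the absence of odd torsion.)

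The main point, and what I expect to be the main obstacle, is a purely combinatorial isoperimetric estimate: a triangulated $2$-disc whose boundary is an equilateral geodesic $m$-gon of side length $r$ (with $m\ge 3$) contains at least $r^2/6$ faces. This is the exact analogue, in the Moore-surface setting, of the estimate behind Corollary 5.3 of \cite{CF1} (Lemma \ref{lmm} above), and it is what excludes ``thin'' fillings: the geodesic condition on two adjacent sides together with the global requirement that the $m$ corners stay pairwise at distance $\ge r$ and that the boundary closes up forces the disc to be genuinely two-dimensional of linear radius $\sim r$. I would establish it along the lines of \cite{CF1}: either by exhibiting, around a suitably chosen vertex at maximal distance from $\partial D$, a nested family of separating cycles whose lengths grow linearly, so that the enclosed area is quadratic; or by induction on $m$, cutting off a corner along a shortest diagonal (which by the previous paragraph has length between $r$ and $2r$) and applying the isoperimetric inequality $I(Y)\ge c$ to the two resulting sub-discs.

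Granting this estimate the argument closes: any disc $D$ filling $\gamma^m$ has, by the second paragraph, an equilateral geodesic $m$-gon boundary of side $r$, so $f(D)\ge r^2/6$; on the other hand we produced such a $D$ with $f(D)\le c^{-1}mr$. Hence $r^2/6\le c^{-1}mr$, that is $r\le 6m/c$, and combining with the first paragraph
\[
N_m(Y)\ \le\ 6c^{-1}mr\ \le\ 6c^{-1}m\cdot\frac{6m}{c}\ =\ \left(\frac{6m}{c}\right)^{2},
\]
as claimed. All the delicate content sits in the quadratic lower bound for fillings of geodesic $m$-gons; the remainder is bookkeeping about simplicial subdivisions and Van Kampen diagrams, and the constant $6m/c$ is generous enough to absorb it.
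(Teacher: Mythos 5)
The paper itself gives no argument for this lemma (it simply cites Lemma 4.7 of \cite{CF1}), so your proposal has to stand on its own, and it does not: the step you yourself flag as the crux is false as you state it. The combinatorial claim ``a triangulated $2$-disc whose boundary is an equilateral geodesic $m$-gon of side length $r$, with the $m$ corners pairwise at distance $\ge r$, has at least $r^2/6$ faces'' has linear-area counterexamples: take a thin simplicial neighbourhood of a tripod with three legs of length about $r/2$. Its boundary splits into three arcs of length about $r$, each a geodesic between two of the three tips, the tips are pairwise at distance about $r$, and yet the number of faces is $O(r)$. So the information you extracted from minimality of $r$ (separation of the $m$ corner points $p_i$ only) cannot force quadratic area, and the whole chain $r^2/6\le c^{-1}mr$ collapses. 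The repair is to run your primality/minimality argument not just at the basepoint but at every vertex $x$ of $C_r$: a path $\delta$ in $D$ joining the copy of $x$ on $\alpha_i$ to the copy of $x$ on $\alpha_j$ ($i\ne j$) cobounds with a boundary arc a subdisc, so $f(\delta)$ is a loop representing a nonzero power of the rebased $\gamma$, hence nontrivial with trivial $m$-th power, hence of length $\ge r$. Thus \emph{every} pair of boundary vertices whose boundary distance is a nonzero multiple of $r$ is intrinsically $\ge r$ apart; equivalently, every boundary sub-arc of length $r$ is a geodesic of $D$. This stronger property does rule out the tripod (the two near-centre points on consecutive sides are close but corresponding), and with it one can prove the quadratic bound, e.g.\ by a coarea argument at a boundary vertex $w$: for $k\le r/2$ the ball $B_k(w)$ meets $\partial D$ only in the sub-arc of boundary-offset $(-k,k)$, while the two boundary vertices at offsets $\pm k$ are exactly $2k$ apart, so the separating sphere at radius $k$ carries at least about $2k$ edges; summing over $k$ gives roughly $r^2/4$ edges and $r^2/6$ faces. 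Note that neither of the two fallback routes you sketch supplies this: ``nested separating cycles around a vertex farthest from $\partial D$'' again uses only the corner data, and ``applying $I(Y)\ge c$ to the two sub-discs'' is a category error --- the isoperimetric constant gives \emph{upper} bounds on minimal filling areas, never a lower bound on the area of the particular disc $D$ you are holding.

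A secondary but genuine defect is the gluing step producing $\Sigma$. Barycentrically subdividing $D$ to make the identification of the arcs $\alpha_1,\dots,\alpha_m$ simplicial destroys the simpliciality of the map to $Y$ (you cannot subdivide $Y$) and turns the singular circle into one of length $2r$, which violates condition (b) in the definition of an $m$-minimal map, so the resulting complex does not even compete in the minimum defining $N_m(Y)$. A correct construction is, for instance, to glue to $\partial D$ a simplicial mapping cylinder of the degree-$m$ simplicial map $C_{mr}\to C_r$ and map it to $Y$ through the cylinder retraction followed by $\gamma$; this keeps the singular circle of length $r$ and the map simplicial, but it changes your face count from $6c^{-1}mr$ to $c^{-1}mr$ plus a term linear in $mr$, so the bookkeeping needed to land exactly under $\left(\frac{6m}{c}\right)^2$ has to be redone. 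In short, the overall strategy (fill $\gamma^m$ using $I(Y)\ge c$, bound $r$ by a filling lower bound, glue to get a small Moore surface) is reasonable, but as written the central lower bound is false under your hypotheses, its correct form requires the pointwise separation statement you did not derive, and the construction of $\Sigma$ does not satisfy the definition of $N_m(Y)$.
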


This is Lemma 4.7 from \cite{CF1}; we refer the reader to \cite{CF1} for a proof.

\begin{theorem}\label{thmm}
Assume that the probability parameter $p$ satisfies $p\ll n^{-1/3-\epsilon}$ where $\epsilon >0$ is fixed. 
Let $m\ge 3$ be an odd prime. 
Then there exists a constant $C_\epsilon>0$ such that a random graph $\Gamma\in G(n,p)$ with probability tending to 1 has the following property: for any subcomplex $Y\subset X_\Gamma$ one has
\begin{eqnarray}\label{ineqq}
N_m(Y) \le C_\epsilon. 
\end{eqnarray}
\end{theorem}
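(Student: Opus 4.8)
The plan is to combine the uniform hyperbolicity statement of Theorem \ref{hyp} with the a priori bound on $N_m$ for hyperbolic complexes given by Lemma \ref{lmc}, exactly in the same way that Lemma \ref{lm17} was deduced from Theorem \ref{hyp} and Lemma \ref{lmm}. The point is that Theorem \ref{hyp} gives, under the assumption $p \ll n^{-1/3-\epsilon}$, a constant $c_\epsilon > 0$ and an event of probability tending to $1$ on which \emph{every} subcomplex $Y \subset X_\Gamma$ satisfies $I(Y) \ge c_\epsilon$. This is a property of all subcomplexes simultaneously, not just of a fixed one, which is precisely what is needed here.

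The key steps, in order, are as follows. First, apply Theorem \ref{hyp} to produce the constant $c_\epsilon > 0$ and the high-probability event $\mathcal{E}_n = \{\Gamma \in G(n,p) : I(Y) \ge c_\epsilon \text{ for all } Y \subset X_\Gamma\}$, so that $\mathbb{P}(\mathcal{E}_n) \to 1$ as $n \to \infty$. Second, fix the odd prime $m \ge 3$ and invoke Lemma \ref{lmc}: for any 2-complex $Y$ with $I(Y) \ge c_\epsilon > 0$ one has $N_m(Y) \le (6m/c_\epsilon)^2$. Third, set $C_\epsilon := (6m/c_\epsilon)^2$ (note this constant depends on $\epsilon$ through $c_\epsilon$, and on $m$, which is permissible since $m$ is fixed). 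Then, on the event $\mathcal{E}_n$, every subcomplex $Y \subset X_\Gamma$ satisfies $I(Y) \ge c_\epsilon$, hence by Lemma \ref{lmc} satisfies $N_m(Y) \le C_\epsilon$, which is inequality (\ref{ineqq}). Since $\mathbb{P}(\mathcal{E}_n) \to 1$, this proves the theorem.

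One small technical point to address is that Lemma \ref{lmc} is stated for 2-complexes, whereas a general subcomplex $Y \subset X_\Gamma$ may have dimension up to $5$ in this range of $p$. However, $N_m(Y)$ and $I(Y)$ are defined via simplicial loops in the 1-skeleton and simplicial fillings, and since any loop and any filling disc involve only simplices of dimension $\le 2$, both invariants depend only on $Y^{(2)}$; thus $N_m(Y) = N_m(Y^{(2)})$ and $I(Y) = I(Y^{(2)})$, and one applies Lemma \ref{lmc} to $Y^{(2)}$. Alternatively, since the conclusion $N_m(Y) \le C_\epsilon$ only concerns fundamental groups of subcomplexes (which depend on the 2-skeleton), it suffices to work throughout with subcomplexes of $X_\Gamma^{(2)}$.

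There is essentially no obstacle here: the theorem is a formal consequence of two results already established, and the only care needed is the bookkeeping with the 2-skeleton just mentioned. The substantive content of the argument — the uniform isoperimetric bound for all subcomplexes of a random clique complex — is entirely contained in Theorem \ref{hyp}, whose proof is deferred to the Appendix; here we merely package it together with the combinatorial Lemma \ref{lmc} from \cite{CF1}.
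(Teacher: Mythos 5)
Your proposal is correct and follows essentially the same route as the paper: invoke Theorem \ref{hyp} to get the event on which every subcomplex $Y\subset X_\Gamma$ satisfies $I(Y)\ge c_\epsilon$, then apply Lemma \ref{lmc} and set $C_\epsilon=(6m/c_\epsilon)^2$. Your remark that both invariants depend only on the 2-skeleton is a harmless bookkeeping point the paper glosses over.
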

\begin{proof}  We know from Theorem \ref{hyp} that, with probability tending to 1, a random 2-complex $X_\Gamma$ has the following property: 
for any subcomplex $Y\subset X_\Gamma$ one has 
$I(Y)\ge c_\epsilon>0$ where $c_\epsilon>0$ is the constant given by Theorem \ref{hyp}. 
Then, setting $C= \left(\frac{6m}{c_\epsilon}\right)^2$, the inequality (\ref{ineqq}) follows from Lemma \ref{lmc}. 
\end{proof}

\begin{proof}[Proof of Theorem \ref{oddtorsion}] Let $c_\epsilon>0$ be the number given by Theorem \ref{hyp}. 
Consider the finite set of all isomorphism types of triangulations $\mathcal S_m =\{\Sigma\}$ of the Moore surface $M(\Z_m,1)$ having at most 
$\left(\frac{6m}{c_\epsilon}\right)^2$ two-dimensional simplexes. Let $\mathcal X_m$ denote 
the set of isomorphism types of images of all surjective simplicial maps $\Sigma \to X$ inducing injective homomorphisms $\pi_1(\Sigma)=\Z_m \to \pi_1(X)$,
where $\Sigma \in \mathcal S_m$. 
The set $\mathcal X_m$ is also finite.

From Theorem \ref{thmm} we obtain that, with probability tending to one, for any subcomplex $Y\subset X_\Gamma$, either $\pi_1(Y)$ has no $m$-torsion, or 
there exists an $m$-minimal map $f: \Sigma \to Y$ with $\Sigma$ having at most $\left(\frac{6m}{c_\epsilon}\right)$ simplexes of dimension 2; in the second case the image 
$X=f(\Sigma)$ is a subcomplex of $Y'$ and $f:\Sigma \to X$ induces a monomorphism $\pi_1(\Sigma)\to \pi_1(X)$, i.e. 
$X\in {\mathcal X}_m$.

From Corollary \ref{freeproduct} we know that the fundamental group of any 2-complex satisfying $\tilde \nu(X)>1/3$ is a free product of several copies of $\Z$ and $\Z_2$ and has no $m$-torsion, as we assume that $m\ge 3$. 
Since the fundamental group of any $X\in \mathcal X_m$ has $m$-torsion, where $m\ge 3$, one has $\tilde \nu(X) \le 1/3$ for any $X\in \mathcal X_m$. 
 Hence, using the finiteness of $\mathcal X_m$ and the results on the containment problem (Theorem \ref{embed}) we see that for 
$p\ll n^{-1/3-\epsilon}$
the probability that a random complex $X_\Gamma$, where $\Gamma\in G(n,p)$, contains a subcomplex isomorphic to one of the complexes
$X\in \mathcal X_m$ tends to $0$ as $n\to \infty$. 
Hence, we obtain that (a.a.s.) any subcomplex $Y\subset X_\Gamma$ does not contain $X\in \mathcal X_m$ as a subcomplex and therefore the fundamental group of 
$Y$ has no $m$-torsion.
\end{proof}

\appendix

\section{Appendix: Proof of Theorem \ref{hyp}}\label{app}

In this Appendix we give a complete and self-contained proof of Theorem \ref{hyp} which plays a key role in this paper. 
As we mentioned above, this statement is closely related to Theorem 1.1 from \cite{Babson}. 
The proof of Theorem \ref{hyp} given below is similar to the arguments of  \cite{BHK}, \cite{Babson} and \cite{CF1} and is based on two auxiliary results: 
(1) the local-to-global principle of Gromov \cite{Gromov87} and on (2) Theorem \ref{uniform} giving uniform isoperimetric constants for complexes satisfying $\tilde \nu(X)\ge 1/3+\epsilon$.

The local-to-global principle of Gromov can be stated as follows:

\begin{theorem}\label{localtoglobal} Let $X$ be a finite 2-complex and let $C>0$ be a constant such that any pure subcomplex $S\subset X$ having at most
$(44)^3\cdot C^{-2}$ two-dimensional simplexes satisfies $I(S)\ge C$. Then $I(X)\ge C\cdot 44^{-1}$.
\end{theorem}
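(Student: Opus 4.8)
The plan is to argue by contradiction, performing surgery on area-minimal simplicial van Kampen diagrams, in the spirit of Gromov's original argument \cite{Gromov87} (compare \cite{BHK}, \cite{Babson}, \cite{CF1}). Suppose $I(X)<C\cdot 44^{-1}$. Then there is a null-homotopic simplicial loop $\gamma$ in $X$ with $A_X(\gamma)>44C^{-1}\cdot|\gamma|$, and among all null-homotopic loops with this property I would choose one, still called $\gamma$, for which $N:=A_X(\gamma)$ is minimal, and fix an area-minimal simplicial filling $D\to X$. Then $D$ is a triangulated disc with $N$ faces and $|\partial D|=|\gamma|=:L$, so that
$$N>44C^{-1}\,L .$$

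\textbf{The small case.} If $N\le (44)^3C^{-2}$, let $S\subset X$ be the image of $D$; it is a pure subcomplex with at most $N\le (44)^3C^{-2}$ faces, so the hypothesis gives $I(S)\ge C$. Since $D\to X$ factors through $S$, the loop $\gamma$ is null-homotopic in $S$ with $A_S(\gamma)\le N$, and $I(S)\ge C$ forces $A_S(\gamma)\le C^{-1}L$. Hence $A_X(\gamma)\le A_S(\gamma)\le C^{-1}L<44C^{-1}L=N$, a contradiction.

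\textbf{The large case and the reduction.} Assume now $N>(44)^3C^{-2}$. The technical core is a purely combinatorial assertion about $D$ which I would isolate as a lemma: \emph{a triangulated disc with more than $(44)^3C^{-2}$ faces and boundary length $L<(C/44)N$ contains an embedded subdisc $D'\subsetneq D$, bounded by a simple cycle of $D^{(1)}$ (possibly running partly along $\partial D$), with $3\le f(D')\le (44)^3C^{-2}$ and $|\partial D'|<C\cdot f(D')$}, where $f$ counts faces. Granting this, the image $S'\subset X$ of $D'$ is pure with at most $(44)^3C^{-2}$ faces, so $I(S')\ge C$ and $A_X(\partial D')\le A_{S'}(\partial D')\le C^{-1}|\partial D'|<f(D')$. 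Replacing the subdiagram $D'$ inside $D$ by an area-minimal filling of the null-homotopic loop $\partial D'$ produces a new simplicial filling $\widetilde D\to X$ of $\gamma$ with $f(\widetilde D)=N-f(D')+A_X(\partial D')<N$, contradicting the minimality of $N$ and thus proving $I(X)\ge C\cdot 44^{-1}$.

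\textbf{The main obstacle.} The hard part is the cutting lemma: one must find a subdisc that is simultaneously \emph{small} (at most $(44)^3C^{-2}$ faces, so the local hypothesis applies) and \emph{round} ($|\partial D'|<C\cdot f(D')$, so that re-gluing strictly lowers the area). Roundness is not available in an arbitrary disc — a long triangulated strip has no such subregion — but the two inequalities $N>(44)^3C^{-2}$ and $N>44C^{-1}L$ exclude this degeneracy: a disc whose area far exceeds its boundary length must contain a region deep inside. Concretely, I would study the combinatorial distance-to-boundary function $\rho$ on $D^{(1)}$: an estimate bounding the number of faces within distance $r$ of $\partial D$ by a quantity of order $L\,r^{2}$ shows that $\rho$ attains a value of order $C^{-1}$, producing an interior vertex $x$ with $\rho(x)\gtrsim C^{-1}$; a combinatorial metric ball of radius of order $C^{-1}$ about $x$ — trimmed to a genuine embedded subdisc by an innermost-curve argument — then has on the order of $C^{-2}$ faces and perimeter of order $C^{-1}$, which is exactly the regime in which $|\partial D'|<C\cdot f(D')$ holds. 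The constant $44$ is calibrated precisely so that the face-count bound, the resulting lower bound on $\rho$, and the perimeter/area comparison for the extracted ball all close up; an alternative route is iterated balanced planar-separator bisection, with $44$ absorbing the accumulated losses. The surgery scheme of the previous paragraph is then routine bookkeeping.
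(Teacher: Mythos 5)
First, a point of comparison: the paper does not prove Theorem \ref{localtoglobal} at all --- it is quoted as Gromov's local-to-global principle, with the explicit constant $44$ coming from the quantitative version in \cite{BHK}, and it is used as a black box. So your proposal is not being measured against an internal argument of the paper; it is an attempt to reprove a cited result, and it must be judged on its own completeness.

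On that score there is a genuine gap. The outer shell (the small case, and the surgery replacing a subdiagram $D'$ of an area-minimal filling by a cheaper filling of $\partial D'$) is the standard reduction and is fine, modulo routine care about non-degeneracy and purity of images. But the entire content of the theorem has been pushed into your ``cutting lemma'', which you do not prove, and the heuristic you offer for it is incorrect. The claim that the number of faces within combinatorial distance $r$ of $\partial D$ is of order $L\,r^{2}$ fails for triangulated discs: start from a single triangle $abc$ and repeatedly insert a new interior vertex inside a face containing the edge $ab$, joined to its three corners. This produces discs with boundary length $L=3$, arbitrarily many faces, and every vertex at distance $\le 1$ from the boundary; so the ``deep interior vertex'' on which your extraction rests need not exist, even though $N>44C^{-1}L$ and $N>44^{3}C^{-2}$. (A round subdisc does exist in that example --- a nested triangle $abx_i$ --- but it is found by a mechanism your distance-function argument cannot see.) Moreover, even when a deep vertex exists, a combinatorial ball of radius of order $C^{-1}$ need not have $\Theta(C^{-2})$ faces nor perimeter $O(C^{-1})$, since vertex degrees are unbounded and perimeter can grow exponentially in the radius; so neither $f(D')\le 44^{3}C^{-2}$ nor $|\partial D'|<C\cdot f(D')$ is justified by the sketch, and the ``calibration of $44$'' is asserted rather than carried out. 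Extracting a subdisc that is simultaneously small and round, with constants that close up, is precisely the hard part of Gromov's theorem (this is what the Bowditch/Papasoglu-type arguments and the quantitative statement in \cite{BHK} actually accomplish), so the proposal in effect reduces the theorem to a statement essentially equivalent to itself.
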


Let $X$ be a 2-complex satisfying $\tilde \nu(X) >1/3$. Then by Corollary \ref{freeproduct} the fundamental group of $X$ is hyperbolic as it is a free product of several copies of
cyclic groups $\Z$ and $\Z_2$. Hence, $I(X)>0$. The following theorem gives a uniform lower bound for the numbers $I(X)$.

\begin{theorem}\label{uniform}
Given $\epsilon >0$ there exists a constant $C_\epsilon>0$ such that for any finite pure 2-complex $X$ with $\tilde \nu(X)\ge 1/3 + \epsilon$ 
one has $I(X) \ge C_\epsilon$.
\end{theorem}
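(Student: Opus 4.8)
The plan is to prove a uniform linear isoperimetric inequality for minimal simplicial disc diagrams. Recall (see the discussion preceding the statement) that $I(X)=I(X^{(2)})$ and that the infimum defining $I(X)$ is attained along prime null--homotopic simplicial loops, for which the area $A_X(\gamma)$ is the least number of faces in a simplicial spanning disc. Hence it suffices to find a constant $C_\epsilon>0$, depending only on $\epsilon$, so that every prime null--homotopic loop $\gamma$ in $X$ bounds a simplicial disc $D$ with $f(D)\le C_\epsilon^{-1}|\gamma|$; equivalently, fixing a minimal simplicial disc diagram $b\colon D\to X$ with $b|_{\partial D}=\gamma$ and $f(D)=A_X(\gamma)$, one must show $f(D)\le C_\epsilon^{-1}|\partial D|$.

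First I would record the standard consequences of minimality: the diagram $D$ is \emph{reduced}, i.e.\ no two faces of $D$ sharing an interior edge have the same image in $X$ (and, as a special case, the restriction of $b$ to the link of an interior vertex of $D$ is a non--backtracking closed walk); violating this would permit an area--decreasing surgery on $D$. Set $S:=b(D)\subset X$, a connected pure $2$--subcomplex. Since any subcomplex of $S$ is also a subcomplex of $X$, one has $\tilde\nu(S)\ge\tilde\nu(X)\ge 1/3+\epsilon$, and applying formula (\ref{useful}) to $S$ gives $3\epsilon\, e(S)\le 3\chi(S)+L(S)$. Reducedness forces every edge of $S$ of degree $\le 1$ to lie in the image $b(\partial D)$ of the boundary (otherwise it would have an interior preimage edge contained in two faces of $D$ with distinct images, hence degree $\ge 2$ in $S$); since $b(\partial D)$ has at most $|\partial D|$ edges, this yields $L(S)\le 2|\partial D|$, and therefore
$$e(S)\ \le\ \epsilon^{-1}\Bigl(\chi(S)+\tfrac{2}{3}\,|\partial D|\Bigr).$$

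Two things then remain, and this is where the real work lies. (i) To control $\chi(S)=1-b_1(S)+b_2(S)$ one must bound $b_2(S)$. If $b_2(S)>0$ then $S$ contains a minimal cycle $Z$ with $\tilde\nu(Z)>1/3$; by Lemmas \ref{nonasph}, \ref{lmB1} and \ref{lmB2} the complex $Z$ is homotopy equivalent to $S^2$ or to $S^2\vee S^1$, so $\chi(Z)\le 2$ and, by (\ref{useful}) together with $L(Z)\le 0$, $e(Z)\le 2\epsilon^{-1}$; moreover $Z$ contains a face $\sigma_0$ with $\partial\sigma_0$ null--homotopic in $Z-\int(\sigma_0)$, hence in $X-\int(\sigma_0)$. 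Using such a $\sigma_0$ one modifies $b\colon D\to X$ across these uniformly bounded ``bubbles'' without increasing $f(D)$, thereby lowering $b_2$ of the image; this is the diagram--level analogue of the minimal--cycle surgery used in the proof of Corollary \ref{freeproduct}, and in the end one may assume $b_2(S)=0$, so $\chi(S)\le 1$ and the displayed inequality bounds $e(S)$ linearly in $|\partial D|$. (ii) One must bound $f(D)$ in terms of $e(S)$ and $|\partial D|$, i.e.\ show that a minimal diagram does not ``wrap'': this is obtained by analysing the reduced vertex links of $D$ together with the observation that any long thin configuration inside $D$ would have image a subcomplex $S'\subset X$ with $\nu(S')\le 1/3<1/3+\epsilon\le\tilde\nu(X)$, which is forbidden; one gets $e(D)\le \mathrm{const}(\epsilon)\cdot(e(S)+|\partial D|)$ and hence $f(D)<\tfrac{2}{3}e(D)\le C_\epsilon^{-1}|\partial D|$ after adjusting the constant. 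Combining (i) and (ii) finishes the proof.

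The main obstacle is precisely the combinatorial diagram analysis in step (ii), together with the bubble surgery in step (i): showing that over a complex with $\tilde\nu\ge 1/3+\epsilon$ a \emph{minimal} disc diagram neither wraps nor carries uncontrolled second homology in its image, with every bound uniform in $X$ and depending only on $\epsilon$. This is exactly where the classification of minimal cycles of Section~5 is used, and the argument closely parallels that of \cite{CF1} (and of \cite{BHK}, \cite{Babson}); the remaining steps are routine manipulation of formula (\ref{useful}).
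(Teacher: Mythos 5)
Your reduction to a minimal reduced disc diagram and the estimate $L(S)\le 2|\partial D|$ for the image $S=b(D)$ are fine, but they only control the size of the \emph{image}, and the heart of the theorem is exactly the step you label (ii) and then assert: bounding $f(D)$ (equivalently $e(D)$) by a constant times $e(S)+|\partial D|$, i.e.\ ruling out uncontrolled wrapping. Your proposed mechanism --- that a ``long thin configuration'' in $D$ would have image a subcomplex $S'$ with $\nu(S')\le 1/3$, which is forbidden --- does not work: a reduced minimal diagram can map large portions of $D$ with high multiplicity onto a \emph{small} subcomplex with $\tilde\nu>1/3$ (for instance a triangulated projective plane, or the quotient complexes of Lemma~\ref{b20}, when filling a power of the core loop), and no forbidden subcomplex of small $\nu$ appears; what controls such regions is not a containment obstruction but a filling estimate for the finitely many bounded-size subcomplexes with $L\le 0$. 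This is precisely the machinery the paper builds and which your sketch omits: the multiplicity filtration $X=X_1\supset X_2\supset\cdots$ of the image, the partial order on its components and the split $\Lambda^+\sqcup\Lambda^-$ according to the sign of $L(X_\lambda)$, the counting estimate $\sum_{\lambda\in\Lambda^+}L(X_\lambda)\le|\partial D^2|$ (Lemma~\ref{lpartial}), and the \emph{relative} isoperimetric inequality $I(X,X')\ge C'_\epsilon$ for subcomplexes with $L(X')\le 0$ (Lemma~\ref{lnegs}), which rests on the finiteness Lemma~\ref{lneg} and the hyperbolicity coming from Corollary~\ref{freeproduct}. Without something playing the role of Lemmas~\ref{lpartial} and~\ref{lnegs}, step (ii) is essentially a restatement of the theorem itself, so as written the proof has a genuine gap.

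A secondary issue is your step (i): the ``bubble surgery'' replacing a face $\sigma_0$ of a minimal cycle by a filling of $\partial\sigma_0$ in $Z-\int(\sigma_0)$ generally \emph{increases} the area, so ``without increasing $f(D)$'' is unjustified; and after the modification the new image may again carry second homology, so the induction needs care. The paper sidesteps this by working with tight complexes and deleting the face from $X$ itself (Lemma~\ref{lm13}), using tightness to get the contradiction, rather than modifying the diagram; note also that once $b_2(X)=0$ one gets $b_2(X_\lambda)=0$ for all the subcomplexes appearing in the filtration, which is what the Euler characteristic bound $\chi(X_\lambda)\le 1$ actually requires.
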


This Theorem is equivalent to Lemma 3.6 from \cite{Babson}. The key ingredient of the proof is the classification of minimal cycles (given by Lemmas \ref{nonasph}, \ref{lmB1}, \ref{lmB2} and Corollary \ref{freeproduct}). 
We do not use webs (as in \cite{BHK}, \cite{Babson}) and operate with simplicial complexes.

\begin{proof}[Proof of Theorem \ref{hyp} using Theorem \ref{localtoglobal} and Theorem  \ref{uniform}] Let $C_\epsilon$ be the constant given by Theorem
 \ref{uniform}.
Consider the set $\mathcal S$ of isomorphism types of all pure 2-complexes having at most $44^3\cdot C_\epsilon^{-2}$ faces. In particular, all complexes in $\mathcal S$ have at most $3^{-1}\cdot 44^3\cdot C_\epsilon^{-2}$ edges. Clearly, the set $\mathcal S$ is finite. 
We may present it as the disjoint union $\mathcal S= \mathcal S_1 \sqcup \mathcal S_2$ 
where any $S\in \mathcal S_1$ satisfies $\tilde \nu(S)\ge 1/3 +\epsilon$ while for
$S\in \mathcal S_2$ one has $\tilde \nu(S) < 1/3 +\epsilon$. By Theorem \ref{embed}, 
a random complex $X_\Gamma$ contain as subcomplexes of $X_\Gamma^{(2)}$ complexes  $S\in \mathcal S_2$ with probability tending to zero as $n\to \infty$. Hence, $X_\Gamma$ may contain as subcomplexes of $X_\Gamma^{(2)}$ only complexes $S\in \mathcal S_1$, a.a.s.
By Theorem \ref{uniform}, any $S\in \mathcal S_1$ satisfies $I(S)\ge C_\epsilon$. Hence we see that with probability tending to one, any subcomplex $S$ of $Y$ having at most $44^3\cdot C_\epsilon^{-2}$ faces satisfies $I(S)\ge C_\epsilon$.
Now applying Theorem \ref{localtoglobal} we obtain $I(Y')\ge
C_\epsilon\cdot 44^{-1}=c_\epsilon$, for any subcomplex $Y'\subset Y$, a.a.s.
\end{proof}

\subsection*{Proof of Theorem \ref{uniform}}\label{secthm10}

\begin{definition} {\rm  \cite{CF1}}
We will say that a finite 2-complex $X$ is tight if for any proper subcomplex $X'\subset X$, $X'\not= X$, one has
$I(X') > I(X).$
\end{definition}
Clearly, one has
\begin{eqnarray}\label{ineq}
I(X) \ge \min \{I(Y)\} 
\end{eqnarray}
where $Y\subset X$ is a proper tight subcomplex. Since $\tilde \nu(Y) \ge \tilde \nu(X)$ for $Y\subset X$, it is obvious from (\ref{ineq}) that it is enough to prove Theorem \ref{uniform} under the additional assumption that $X$ is tight.

\begin{remark}\label{rmrk12} {\rm Suppose that $X$ is pure and tight and suppose that $\gamma: S^1\to X$ is a simplicial loop with the ratio
$|\gamma|\cdot A_X(\gamma)^{-1}$ less than the minimum of the numbers $I(X')$ where $X'\subset X$ is a proper subcomplex. Let $b: D^2\to X$ be a minimal spanning disc for $\gamma$; then $b(D^2)=X,$ i.e. $b$ is surjective. Indeed, if the image of $b$ does not contain a 2-simplex $\sigma$ then removing
it we obtain a subcomplex $X'\subset X$ with $A_{X'}(\gamma)=A_X(\gamma)$ and hence $I(X') \le I(X)\le |\gamma|\cdot A_X(\gamma)^{-1}$ contradicting the assumption on $\gamma$. }
\end{remark}


\begin{lemma}\label{lm13} If $X$ is a tight complex with $\tilde \nu(X)>1/3$ then $b_2(X)=0$.
\end{lemma}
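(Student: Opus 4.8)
The plan is to argue by contradiction: suppose $X$ is tight with $\tilde\nu(X)>1/3$ but $b_2(X)\neq 0$. Since $b_2(X)>0$, the complex $X$ contains a minimal cycle $Z$, and by restriction $\tilde\nu(Z)\ge\tilde\nu(X)>1/3$. The classification of minimal cycles already established in this paper — Lemmas \ref{nonasph}, \ref{lmB1} and \ref{lmB2}, together with Corollary \ref{freeproduct} — tells us that such a $Z$ is homotopy equivalent either to $S^2$ or to $S^2\vee S^1$, and, crucially, that it contains a face $\sigma$ whose boundary $\partial\sigma$ is null-homotopic in $Z-\mathrm{int}(\sigma)$.

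The next step is to exploit tightness. Consider the subcomplex $X'=X-\mathrm{int}(\sigma)$, which is proper since $\sigma\subset Z\subset X$ is a genuine face. I want to show $I(X')\le I(X)$, contradicting tightness. To do this I would take a null-homotopic prime loop $\gamma$ in $X$ realising (or nearly realising) the infimum defining $I(X)$, together with a minimal spanning disc $b:D^2\to X$. If the image of $b$ avoids $\sigma$ then $\gamma$ is already filled inside $X'$ with the same area, so $I(X')\le I(X)$ immediately. If $b$ does use $\sigma$, I would modify the filling: wherever the disc $D^2$ maps a triangle onto $\sigma$, excise that triangle and replace it by a filling of $\partial\sigma$ inside $Z-\mathrm{int}(\sigma)\subset X'$. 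Since $\partial\sigma$ is null-homotopic in $Z-\mathrm{int}(\sigma)$, such a replacement filling exists with some fixed number of triangles $k=A_{Z-\mathrm{int}(\sigma)}(\partial\sigma)$ depending only on $Z$. This yields a spanning disc for $\gamma$ inside $X'$ — so $I(X')$ is bounded by a ratio of the form $|\gamma|/(A_X(\gamma)+\text{(correction)})$.

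The main obstacle is precisely bookkeeping this area correction: naively each use of $\sigma$ costs $k-1$ extra triangles, and if $\sigma$ is used many times the area could blow up and the inequality $I(X')\le I(X)$ could fail. The cleaner route, and the one I would actually pursue, is to avoid spanning discs altogether and instead use the homotopy-type conclusion directly. Since $Z\simeq S^2$ or $S^2\vee S^1$, the inclusion-induced map $H_2(Z)\to H_2(X)$ is injective (minimality of $Z$), and the curve $\partial\sigma$ being null-homotopic in $Z-\mathrm{int}(\sigma)$ means $X$ is homotopy equivalent to $(X-\mathrm{int}(\sigma))\vee S^2$; in particular $\pi_1(X-\mathrm{int}(\sigma))\cong\pi_1(X)$. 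Then one compares isoperimetric constants of $X$ and $X-\mathrm{int}(\sigma)$ using that a prime loop realising $I$ can be chosen, after passing to the universal cover and using that the $2$-sphere wedge summand lifts to embedded spheres, so that a minimal filling never needs to cross $\sigma$; this gives $I(X-\mathrm{int}(\sigma))=I(X)$, contradicting tightness. Hence $b_2(X)=0$.
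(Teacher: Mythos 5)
Your setup is the same as the paper's: find a minimal cycle $Z\subset X$ with $\tilde\nu(Z)>1/3$, use Lemmas \ref{nonasph}, \ref{lmB1}, \ref{lmB2} to produce a face $\sigma$ with $\partial\sigma$ null-homotopic in $Z-\int(\sigma)$, and compare $X$ with the proper subcomplex $X'=X-\int(\sigma)$. The comparison itself, however, is where your proposal goes astray. The ``main obstacle'' you identify --- controlling the extra area created when a filling in $X$ that uses $\sigma$ is replaced by a filling in $X'$ --- is not an obstacle at all, because no upper bound on $A_{X'}(\gamma)$ is ever needed. The only two facts required are: (i) $X$ and $X'$ have the same $1$-skeleton and $\pi_1(X')\to\pi_1(X)$ is an isomorphism (van Kampen: $X$ is obtained from $X'$ by attaching the $2$-cell $\sigma$ along a curve which is already null-homotopic in $X'$), so exactly the same loops $\gamma$ enter the infima defining $I(X)$ and $I(X')$; and (ii) $A_X(\gamma)\le A_{X'}(\gamma)$, trivially, since every filling in $X'\subset X$ is also a filling in $X$. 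Then for every such $\gamma$ one has $|\gamma|/A_X(\gamma)\ge |\gamma|/A_{X'}(\gamma)\ge I(X')$, and taking the infimum over $\gamma$ gives $I(X)\ge I(X')$, contradicting tightness. Note that the monotonicity works in your favour: a filling in $X'$ being forced to be larger only makes the $X'$-ratio smaller, so your worry that ``the area could blow up and $I(X')\le I(X)$ could fail'' has the inequality backwards; the excision-and-replacement construction you describe would only be needed to certify that $\gamma$ is null-homotopic in $X'$, and that is already guaranteed by (i).

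Your ``cleaner route'' then compensates for this misdiagnosis by asserting something stronger and unproved: that a loop realising $I(X)$ admits a minimal filling avoiding $\sigma$, whence $I(X')=I(X)$. The sketch via the universal cover and lifted sphere summands is not an argument --- minimal fillings in $X$ may well pass through $\sigma$, and nothing in the homotopy equivalence $X\simeq X'\vee S^2$ prevents this --- and, as explained above, the equality is not needed: the one-line inequality $I(X)\ge I(X')$ suffices, and this is exactly how the paper concludes. With that repair your argument coincides with the paper's proof. (A minor imprecision: $Z$ is homotopy equivalent to $S^2$ or $S^2\vee S^1$ only for minimal cycles of type A; for type B it is homotopy equivalent to $S^2$; in either case the face $\sigma$ with $\partial\sigma$ null-homotopic in $Z-\int(\sigma)$ exists, which is all you actually use.)
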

\begin{proof} Assume that $b_2(X)\not=0$. Then there exists a minimal cycle $Z\subset X$ satisfying $\tilde \nu(Z)>1/3$. Hence, by Lemmas \ref{nonasph}, \ref{lmB1} and \ref{lmB2} we may find a 2-simplex $\sigma\subset Z\subset X$
such that $\partial \sigma$ is null-homotopic in $Z-\sigma\subset X-\sigma=X'$. Note that $X'^{(1)}=X^{(1)}$ and a simplicial curve $\gamma: S^1\to X'$ is null-homotopic in $X'$ if and only if it is null-homotopic in $X$. Besides, $A_X(\gamma) \le A_{X'}(\gamma)$ and hence
$$\frac{|\gamma|}{A_X(\gamma)}\ge \frac{|\gamma|}{A_{X'}(\gamma)},$$
which implies that $I(X) \ge I(X') >I(X)$ -- contradiction.
\end{proof}

%

\begin{lemma}\label{lneg} Given $\epsilon >0$ there exists a constant $C'_\epsilon>0$ such that for any finite pure tight connected 2-complex with $\tilde \nu(X) \ge 1/3+\epsilon$ and $L(X) \le 0$ one has $I(X) \ge C'_\epsilon$.
\end{lemma}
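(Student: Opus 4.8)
The plan is to show that the three hypotheses --- $\tilde\nu(X)\ge 1/3+\epsilon$, $L(X)\le 0$, and tightness --- together force $X$ to have a number of simplices bounded in terms of $\epsilon$ alone, after which the lemma follows from a finiteness argument together with Corollary \ref{freeproduct}.

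First I would invoke Lemma \ref{lm13}: since $X$ is tight and $\tilde\nu(X)\ge 1/3+\epsilon>1/3$, we get $b_2(X)=0$, and as $X$ is connected this gives $\chi(X)=1-b_1(X)\le 1$; moreover, since $X$ is pure it has at least one face, hence $e(X)\ge 3$. Evaluating formula (\ref{useful}) on $X$ itself and using $\tilde\nu(X)\le\nu(X)$, the inequality $\nu(X)\ge 1/3+\epsilon$ rewrites as $3\chi(X)+L(X)\ge 3\epsilon\, e(X)$. Because $L(X)\le 0$ this yields $\chi(X)\ge\epsilon\, e(X)>0$, so $\chi(X)=1$ and $e(X)\le\epsilon^{-1}$. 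Connectedness gives $v(X)\le e(X)+1$, and from $L(X)=2e(X)-3f(X)$ together with $0\ge L(X)\ge -3\chi(X)=-3$ we obtain $f(X)\le(2e(X)+3)/3$; thus $v(X),e(X),f(X)$ are all bounded purely in terms of $\epsilon$.

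Hence $X$ lies in a finite set $\mathcal T_\epsilon$ of isomorphism types of connected pure $2$-complexes satisfying $\tilde\nu\ge 1/3+\epsilon$ and $e\le\epsilon^{-1}$. For each $X\in\mathcal T_\epsilon$ one has $\tilde\nu(X)>1/3$, so by Corollary \ref{freeproduct} the group $\pi_1(X)$ is a free product of copies of $\Z$ and $\Z_2$, in particular hyperbolic, whence $I(X)>0$. Setting $C'_\epsilon:=\min_{X\in\mathcal T_\epsilon} I(X)$ (and $C'_\epsilon:=1$, say, if $\mathcal T_\epsilon=\emptyset$) then provides the required positive constant depending only on $\epsilon$.

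I do not expect a genuine obstacle here: the whole content is the elementary observation that tightness, through $b_2(X)=0$, collapses the Euler-characteristic inequality coming from $\tilde\nu(X)\ge 1/3+\epsilon$ into an absolute bound on $e(X)$ once $L(X)\le 0$. The only points to be slightly careful about are recording the two-sided estimate $-3\le L(X)\le 0$ so that the face count is also controlled, and noting that the argument is unaffected in the degenerate case where $\mathcal T_\epsilon$ happens to be empty.
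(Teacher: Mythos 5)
Your proof is correct and follows essentially the same route as the paper: use Lemma \ref{lm13} to get $\chi(X)\le 1$, combine formula (\ref{useful}) with $L(X)\le 0$ to bound $e(X)\le\epsilon^{-1}$, conclude finiteness of the possible isomorphism types, and take $C'_\epsilon$ to be the minimum of the $I(X)$, which is positive by Corollary \ref{freeproduct}. The extra observations ($\chi(X)=1$, explicit bounds on $v(X)$ and $f(X)$) are harmless refinements of the same argument.
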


This Lemma is similar to Theorem \ref{uniform} but it has an additional assumption that $L(X) \le 0$. It is clear from the proof that the assumption $L(X)\le 0$ can be replaced, without altering the proof, by any assumption of the type $L(X) \le 1000$, i.e. by any specific upper bound.

\begin{proof} We show that the number of isomorphism types of complexes $X$ satisfying the conditions of the Lemma is finite; hence the statement of the Lemma follows  by setting $C'_\epsilon=\min I(X)$ and using Corollary \ref{freeproduct} which gives $I(X)>0$ (since $\pi_1(X)$ is hyperbolic) and hence $C'_\epsilon>0$.
The inequality
$$\nu(X) = \frac{1}{3} +\frac{3\chi(X)+L(X)}{3e(X)} \ge \frac{1}{3} + \epsilon$$
is equivalent to
$$e(X) \le \epsilon^{-1}\cdot (3\chi(X) +L(X)/2),$$
where $e(X)$ denotes the number of 1-simplexes in $X$.
By Lemma \ref{lm13} we have $\chi(X) =1-b_1(X) \le 1$ and using the assumption $L(X) \le 0$ we obtain
$e(X) \le \epsilon^{-1}.$
This implies the finiteness of the set of possible isomorphism types of $X$ and the result follows. 
\end{proof}

We will use a relative isoperimetric constant $I(X, X')\in \R$ for a pair consisting of a finite 2-complex
$X$ and its subcomplex $X'\subset X$; it is defined as the infimum of all ratios
$
{|\gamma|}\cdot{A_X(\gamma)}^{-1}$
where $\gamma: S^1\to X'$ runs over simplicial loops in $X'$ which are null-homotopic in $X$.
Clearly, $I(X,X')\ge I(X)$ and $I(X, X')=I(X)$ if $X'=X$.
Below is a useful strengthening of Lemma \ref{lneg}.

\begin{lemma}\label{lnegs} Given $\epsilon >0$, let $C'_\epsilon>0$ be the constant given by Lemma \ref{lneg}. Then for any finite pure tight connected 2-complex with $\tilde \nu(X) \ge 1/2+\epsilon$ and for a connected subcomplex $X'\subset X$ satisfying $L(X') \le 0$ one has $I(X,X') \ge C'_\epsilon$.
\end{lemma}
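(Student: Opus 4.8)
The plan is to fix a simplicial loop $\gamma\colon S^1\to X'$ which is null-homotopic in $X$ and to bound its area, i.e.\ to show $A_X(\gamma)\le (C'_\epsilon)^{-1}\,|\gamma|$. By the standard reduction (whenever the lift of $\gamma$ to $\tilde X$ is not simple one splits off a null-homotopic sub-loop lying on $\gamma\subset X'$, which only decreases the ratio $|\gamma|\cdot A_X(\gamma)^{-1}$) we may assume $\gamma$ is prime in $X$. Then a minimal simplicial filling of $\gamma$ has the form $b\colon V\to X$ with $V$ a triangulated disc, $b|_{\partial V}$ a homeomorphism onto the loop $\gamma$, and $A_X(\gamma)=f(V)$; moreover $V$ may be taken reduced, i.e.\ $b$ folds $V$ across no interior edge.

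Set $Y=X'\cup b(V)$, a subcomplex of $X$. It is connected, since $X'$ and $b(V)$ are connected and meet along $\gamma(S^1)\neq\emptyset$. Because $b$ is a filling of $\gamma$ inside $Y$ we have $A_Y(\gamma)\le f(V)=A_X(\gamma)$, and because $Y\subseteq X$ we have $A_Y(\gamma)\ge A_X(\gamma)$; hence $A_Y(\gamma)=A_X(\gamma)$. Consequently it suffices to prove the absolute inequality $I(Y)\ge C'_\epsilon$, since then $|\gamma|\cdot A_X(\gamma)^{-1}=|\gamma|\cdot A_Y(\gamma)^{-1}\ge I(Y)\ge C'_\epsilon$.

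The heart of the argument is that $Y$ is forced to be small. First, $\tilde\nu(Y)\ge\tilde\nu(X)\ge 1/2+\epsilon$ because $Y\subseteq X$. Second, $b_2(Y)=0$: as $X$ is a $2$-complex with $b_2(X)=0$ (Lemma~\ref{lm13}), every $2$-cycle of the subcomplex $Y$ is a $2$-cycle of $X$, hence zero; being connected, $\chi(Y)=1-b_1(Y)\le 1$. Third, $L(Y)\le 0$: splitting $L(Y)=\sum_e(2-\deg_Y(e))$ according to whether $e$ lies in $X'$ or not, the edges in $X'$ satisfy $\deg_Y(e)\ge\deg_{X'}(e)$ and so contribute at most $L(X')\le 0$, while each remaining edge of $Y$ is the image of an \emph{interior} edge of $V$ (the boundary $\partial V$ maps onto $\gamma\subset X'$) and therefore, $V$ being reduced, lies on two distinct faces of $b(V)$, contributing $2-\deg_Y(e)\le 0$. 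Now $L(Y)\le 0$ gives $f(Y)\ge\tfrac23 e(Y)$, hence $\chi(Y)=v(Y)-e(Y)+f(Y)\ge v(Y)-\tfrac13 e(Y)$; combining with $v(Y)\ge(1/2+\epsilon)e(Y)$ and $\chi(Y)\le 1$ yields $(1/6+\epsilon)e(Y)\le 1$, i.e.\ $e(Y)\le 5$. Thus $Y$ belongs to a finite family of $2$-complexes, each with $\tilde\nu>1/3$ and hence (Corollary~\ref{freeproduct}) with hyperbolic fundamental group, so $I(Y)>0$; the minimum of $I(Y)$ over this finite family is a positive constant which is at least $C'_\epsilon$ by Lemma~\ref{lneg} and the remark following it, applied to the tight subcomplexes occurring in (\ref{ineq}) for such $Y$ (these have boundedly many edges, so $L$ is bounded on them). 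This completes the reduction.

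The step I expect to be the main obstacle is the inequality $L(Y)\le 0$: it relies on being able to choose the minimal spanning disc reduced, so that every interior edge of $V$ survives in $b(V)$ with degree at least $2$, and on matching the contribution of the edges of $X'$ cleanly against the hypothesis $L(X')\le 0$. The preliminary reductions — to a prime loop, and (if one wishes) to a pure $X'$ — are routine but must be carried out so as not to disturb the relative area $A_X(\gamma)$.
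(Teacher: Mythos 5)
Your proof is correct and follows essentially the same route as the paper's: you take a minimal, fold-free (Ronan) spanning disc for a loop in $X'$, form the union $Y=X'\cup b(V)$, verify $A_Y(\gamma)=A_X(\gamma)$, $b_2(Y)=0$ and $L(Y)\le 0$ by splitting the edge sum between $X'$ and the images of interior edges, and conclude via the finiteness argument of Lemma \ref{lneg}; the paper packages the identical idea as a contradiction against $\min_Y I(Y)$ taken over subcomplexes $X'\subset Y\subset X$ with $L(Y)\le 0$. The only looseness, which you share with the paper, is the identification of the resulting constant with the $C'_\epsilon$ of Lemma \ref{lneg} (since $Y$ need not be pure or tight), which is harmless because one may simply take $C'_\epsilon$ to be the minimum of $I$ over the slightly enlarged finite family of bounded-size complexes arising here.
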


\begin{proof}
We show below that under the assumptions on $X$, $X'$ one has
\begin{eqnarray} \label{y}
I(X,X') \ge \min_Y I(Y)
\end{eqnarray}
where $Y$ runs over all subcomplexes $X'\subset Y\subset X$ satisfying $L(Y)\le 0$. Clearly, $\tilde \nu(Y)\ge 1/3+\epsilon$ for any such $Y$.
 By Lemma \ref{lm13} we have that $b_2(X)=0$ which implies that $b_2(Y)=0$. Besides, without loss of generality we may assume that
$Y$ is connected. The arguments of the proof of Lemma \ref{lneg} now apply (i.e. $Y$ may have finitely many isomorphism types, each having a hyperbolic fundamental group) and it follows that $\min_Y I(Y)\ge C'_\epsilon$ where $C'_\epsilon>0$ is a constant that only depends on $\epsilon$. Hence if (\ref{y}) holds we have $I(X,X')\ge \min_Y I(Y)\ge C'_\epsilon$ and the result follows.

Suppose that inequality (\ref{y}) is false, i.e. $I(X,X') < \min_Y I(Y)$, and consider a simplicial loop $\gamma:S^1\to X'$ satisfying $\gamma\sim 1$ in $X$ and
$|\gamma|\cdot A_X(\gamma)^{-1} <\min_Y I(Y).$ Let $\psi: D^2\to X$ be a simplicial spanning disc of minimal area.
It follows from the arguments of Ronan \cite{Ron}, that $\psi$ is non-degenerate in the following sense: for any 2-simplex $\sigma$ of $D^2$ the image $\psi(\sigma)$ is a 2-simplex
and for two distinct 2-simplexes $\sigma_1, \sigma_2$ of $D^2$ with $\psi(\sigma_1)=\psi(\sigma_2)$ the intersection $\sigma_1\cap \sigma_2$ is either $\emptyset$ or a vertex of $D^2$. In other words, we exclude {\it foldings}, i.e. situations such that $\psi(\sigma_1)=\psi(\sigma_2)$ and $\sigma_1\cap \sigma_2$ is an edge.
Consider $Z=X'\cup \psi(D^2)$. Note that $L(Z)\le 0$. Indeed, since $$L(Z)=\sum_e (2-\deg_Z(e)),$$ where $e$ runs over the edges of $Z$, we see that for $e\subset X'$,
$\deg_{X'}(e)\le \deg_Z(e)$ and for a newly created edge $e\subset \psi(D^2)$, clearly $\deg_Z(e)\ge 2$. Hence, $L(Z)\le L(X')\le 0$.
On the other hand, $A_X(\gamma)=A_Z(\gamma)$ and hence $I(Z)\le |\gamma|\cdot A_X(\gamma)^{-1}<\min_Y I(Y)$, a contradiction.
\end{proof}

The main idea of the proof of Theorem \ref{uniform} in the general case is to find a planar complex (a \lq\lq singular  surface\rq\rq)\ $\Sigma$, with one boundary component $\partial_+\Sigma$ being the initial loop and such that \lq\lq the rest of the boundary\rq\rq\, $\partial_-\Sigma$ is a \lq\lq product of negative loops\rq\rq\,  (i.e. loops satisfying Lemma \ref{lnegs}). The essential part of the proof is in estimating the area (the number of 2-simplexes) of such $\Sigma$.

\begin{proof}[Proof of Theorem \ref{uniform}]
Consider a connected tight pure 2-complex $X$ satisfying
\begin{eqnarray}\label{ass}
\tilde \nu(X) \ge \frac{1}{3} +\epsilon
\end{eqnarray}
and a simplicial prime loop $\gamma: S^1 \to X$ such that the ratio
$|\gamma|\cdot A_X(\gamma)^{-1}$ is less than the minimum of the numbers $I(X')$ for all proper subcomplexes $X'\subset X$. Consider a minimal spanning disc
$b: D^2\to X$ for $\gamma=b|_{\partial D^2}$; here $D^2$ is a triangulated disc and $b$ is a simplicial map. As we showed in Remark \ref{rmrk12}, the map $b$ is surjective.
As explained in the proof of Lemma \ref{lnegs}, due to arguments of Ronan \cite{Ron},
we may assume that $b$ has no foldings.


For any integer $i\ge 1$ we denote by $X_i\subset X$ the pure subcomplex generated by all 2-simplexes $\sigma$ of $X$ such that the preimage $b^{-1}(\sigma)\subset D^2$ contains $\ge i$ two-dimensional simplexes. One has $X=X_1\supset X_2\supset X_3\supset \dots.$ Each $X_i$ may have several connected components and we will denote by $\Lambda$ the set labelling all the connected components of the disjoint union $\sqcup_{i\ge 1} X_i$. For $\lambda\in \Lambda$ the symbol $X_\lambda$ will denote the corresponding connected component of $\sqcup_{i\ge 1} X_i$
and the symbol
$i=i(\lambda)\in \{1, 2, \dots\}$ will denote the index $i\ge 1$ such that $X_\lambda$ is a connected component of $X_i$, viewed as a subset of $\sqcup_{i\ge 1} X_i$. We endow $\Lambda$ with the following partial order: $\lambda_1\le \lambda_2$ iff $X_{\lambda_1}\supset X_{\lambda_2}$ (where $X_{\lambda_1}$ and $X_{\lambda_2}$ are viewed as subsets of $X$) and $i(\lambda_1)\le i(\lambda_2)$.

Next we define the sets
$$\Lambda^-=\{\lambda\in \Lambda; L(X_\lambda)\le 0\}$$
and
$$ \Lambda^+=\{\lambda\in \Lambda; \mbox{for any $\mu\in \Lambda$ with $\mu\le \lambda$, }\, L(X_\mu)> 0\}.$$
Finally we consider the following subcomplex  of the disk $D^2$:
\begin{eqnarray}\label{defsigma}
\Sigma' =D^2-\bigcup_{\lambda\in \Lambda^-}{\rm {Int}}(b^{-1}(X_\lambda))\end{eqnarray}
and we shall denote by $\Sigma$ the connected component of $\Sigma'$ containing the boundary circle $\partial D^2$.
%

Recall that for a 2-complex $X$ the symbol $f(X)$ denotes the number of 2-simplexes in $X$. We have
\begin{eqnarray}\label{one1}
f(D^2) = \sum_{\lambda\in \Lambda} f(X_\lambda),
\end{eqnarray}
and
\begin{eqnarray}\label{two2}
f(\Sigma) \le f(\Sigma') = \sum_{\lambda\in \Lambda^+} f(X_\lambda).
\end{eqnarray}
Formula (\ref{one1}) follows from the observation that any 2-simplex of $X=b(D^2)$ contributes to the RHS of (\ref{one}) as many units as its multiplicity (the number of its preimages under $b$). Formula (\ref{two2}) follows from (\ref{one1}) and from the fact that for a 2-simplex $\sigma$ of $\Sigma$ the image $b(\sigma)$ lies always in the complexes $X_\lambda$ with $L(X_\lambda)> 0$.

\begin{lemma} \label{lpartial} One has the following inequality
\begin{eqnarray}\label{llb}
\sum_{\lambda\in \Lambda^+}L(X_\lambda)\le |\partial D^2|.
\end{eqnarray}
\end{lemma}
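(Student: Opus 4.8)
The plan is to reduce the inequality (\ref{llb}) to an inequality that can be checked one edge of $X$ at a time, by rewriting both sides as sums over $E(X)$. Fix an edge $e\in E(X)$ and let $\sigma_1,\dots,\sigma_d$ be the $2$-simplexes of $X$ containing $e$, numbered so that the multiplicities $m_j:=|b^{-1}(\sigma_j)|$ satisfy $m_1\ge\dots\ge m_d$; put $M(e)=m_1$. For $1\le i\le M(e)$ the edge $e$ lies in $X_i$; write $\lambda_i(e)$ for the component of $X_i$ containing $e$, so that $\deg_{X_{\lambda_i(e)}}(e)=|\{j:\ m_j\ge i\}|$. For $j\le i$ one has $X_{\lambda_i(e)}\subseteq X_i\subseteq X_j$, hence $X_{\lambda_j(e)}\supseteq X_{\lambda_i(e)}$, and $i(\lambda_j(e))=j\le i$, so $\lambda_j(e)\le\lambda_i(e)$ in $(\Lambda,\le)$. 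Moreover $\Lambda^+$ is a down-set: if $\lambda\in\Lambda^+$ and $\mu\le\lambda$, then every $\nu\le\mu$ satisfies $\nu\le\lambda$ and hence $L(X_\nu)>0$, so $\mu\in\Lambda^+$. Therefore $\{i\le M(e):\ \lambda_i(e)\in\Lambda^+\}$ is an initial segment $\{1,\dots,k(e)\}$ (possibly empty), and $\{\lambda\in\Lambda^+:\ e\in E(X_\lambda)\}=\{\lambda_1(e),\dots,\lambda_{k(e)}(e)\}$. Summing $L(X_\lambda)=\sum_{e\subset X_\lambda}(2-\deg_{X_\lambda}(e))$ over $\Lambda^+$ and interchanging the order of summation gives
\[
\sum_{\lambda\in\Lambda^+}L(X_\lambda)=\sum_{e\in E(X)}\Bigl(2k(e)-\sum_{i=1}^{k(e)}\bigl|\{j:\ m_j\ge i\}\bigr|\Bigr)=\sum_{e\in E(X)}\Bigl(2k(e)-\sum_{j=1}^{d}\min\bigl(m_j,k(e)\bigr)\Bigr).
\]

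For the right-hand side, observe that $|\partial D^2|=\sum_{\eta\in E(D^2)}\bigl(2-\deg_{D^2}(\eta)\bigr)$, each boundary edge contributing $1$ and each interior edge $0$. Since $b$ has no foldings, the restriction of $b$ to any $2$-simplex of $D^2$ is an isomorphism onto its image, so every $2$-simplex of $D^2$ lying over some $\sigma_j$ has exactly one edge lying over $e$, and for an interior edge $\eta\subset D^2$ over $e$ the two $2$-simplexes containing $\eta$ map to two \emph{distinct} faces among $\sigma_1,\dots,\sigma_d$. Counting the incidences ``($2$-simplex over $e$)\,--\,(its edge over $e$)'' in two ways yields, writing $\beta(e):=\#\{\eta\subset\partial D^2:\ b(\eta)=e\}$,
\[
2\,|b^{-1}(e)|=\sum_{j=1}^{d}m_j+\beta(e),\qquad\text{hence}\qquad \sum_{\eta\in b^{-1}(e)}\bigl(2-\deg_{D^2}(\eta)\bigr)=\beta(e),
\]
and summing over $e$ gives $|\partial D^2|=\sum_{e\in E(X)}\beta(e)$. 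Thus (\ref{llb}) follows once we prove the edgewise inequality
\[
2k(e)-\sum_{j=1}^{d}\min\bigl(m_j,k(e)\bigr)\ \le\ \beta(e)\qquad\text{for every }e\in E(X).
\]

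To prove this, set $t=|\{j:\ m_j\ge k(e)\}|$, so that (as the $m_j$ are non-increasing) the left-hand side equals $(2-t)k(e)-\sum_{j>t}m_j$. If $k(e)=0$ it is $0\le\beta(e)$, and if $t\ge2$ it is $\le0\le\beta(e)$. The remaining case is $t=1$, i.e. $m_1\ge k(e)>m_2\ge\dots\ge m_d$, and here the no-folding property is used again: among the $m_1$ preimage $2$-simplexes of $\sigma_1$, those whose edge over $e$ is interior in $D^2$ are matched, through that edge, with pairwise distinct preimage $2$-simplexes of $\sigma_2,\dots,\sigma_d$, so there are at most $m_2+\dots+m_d$ of them; the remaining ones have their edge over $e$ on $\partial D^2$, whence $\beta(e)\ge m_1-(m_2+\dots+m_d)$ (when $d=1$ this reads $\beta(e)=m_1$, since then every edge of $D^2$ over $e$ is forced onto $\partial D^2$). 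Since $k(e)\le m_1$ we conclude $k(e)-\sum_{j\ge2}m_j\le m_1-\sum_{j\ge2}m_j\le\beta(e)$, and summing over $e\in E(X)$ proves (\ref{llb}). The step I expect to be the genuine obstacle is exactly the case $t(e)=1$ (which includes the boundary edges of $X$): the edgewise inequality is \emph{false} if one uses only the trivial bound $2|b^{-1}(e)|\ge\sum_j m_j$, and one really has to extract from the absence of foldings the fact that enough preimage edges of $e$ are forced onto $\partial D^2$; everything else is bookkeeping with the filtration $X=X_1\supseteq X_2\supseteq\cdots$ and the poset $(\Lambda,\le)$.
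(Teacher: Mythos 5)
The paper does not actually reproduce a proof of this lemma: it simply cites Lemma~6.8 of \cite{CF1}. Your proof is therefore an independent, self-contained argument, and after checking it carefully I find it correct. The two reductions are clean: (i) rewriting $\sum_{\lambda\in\Lambda^+}L(X_\lambda)$ as an edgewise sum using that $\Lambda^+$ is a down-set of $(\Lambda,\le)$, so that the components $\lambda\in\Lambda^+$ containing a fixed edge $e$ form an initial segment $\lambda_1(e),\dots,\lambda_{k(e)}(e)$ of the chain $\lambda_1(e)\le\lambda_2(e)\le\cdots$, giving the expression $2k(e)-\sum_j\min(m_j,k(e))$; and (ii) expressing $|\partial D^2|$ as $\sum_e\beta(e)$ via the incidence count $2|b^{-1}(e)|=\sum_j m_j+\beta(e)$, which uses exactly the no-folding normalisation of $b$ (each $2$-simplex over $e$ has a unique edge over $e$, and the two sides of an interior preimage edge map to distinct faces of $X$). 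The case analysis on $t=|\{j:m_j\ge k(e)\}|$ is also complete: the only nontrivial case $t=1$ is handled by the injective matching of preimages of $\sigma_1$ with interior $e$-edge to preimages of $\sigma_2,\dots,\sigma_d$, giving $\beta(e)\ge m_1-\sum_{j\ge2}m_j\ge k(e)-\sum_{j\ge2}m_j$; injectivity of that matching again rests on no-folding, and the $d=1$ boundary case is handled. One small remark on presentation: the identity $\deg_{X_{\lambda_i(e)}}(e)=|\{j:m_j\ge i\}|$ tacitly uses that the $2$-simplexes of $X_i$ are \emph{exactly} those with multiplicity $\ge i$ (not merely at least those), and that all such simplexes through $e$ lie in the single component $X_{\lambda_i(e)}$; both are true but worth a sentence. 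I cannot compare the route to the CF1 original since the paper only points there, but the filtration $X_1\supset X_2\supset\cdots$, the poset $\Lambda$, and the decomposition into $\Lambda^\pm$ are precisely the structures the paper sets up for this lemma, so your argument sits squarely within the intended framework; the edge-by-edge double count is a concrete and verifiable way to carry it out.
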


See \cite{CF1}, Lemma 6.8 for the proof.

Now we continue with the proof of Theorem \ref{uniform}.
Consider a tight pure 2-complex $X$ satisfying (\ref{ass}) and a simplicial loop $\gamma: S^1\to X$ as above.
We will use the notation introduced earlier. The complex $\Sigma$ is a connected subcomplex of the disk $D^2$; it contains the boundary circle $\partial D^2$
which we will denote also by
$\partial_+\Sigma$. The closure of the complement of $\Sigma$,
$$N=\overline{D^2-\Sigma}\subset D^2$$ is a pure 2-complex. Let $N=\cup_{j\in J}N_j$ be the strongly connected components of $N$.
Each $N_j$ is PL-homeomorphic to a disc and we define
$$\partial_-\Sigma=\cup_{j\in J}\partial N_j,$$ the union of the circles $\partial N_j$ which are the boundaries of the strongly connected components of $N$.
It may happen that $\partial_+\Sigma$ and $\partial_-\Sigma$ have nonempty intersection. Also, the circles forming $\partial_-\Sigma$ may not be disjoint.

We claim that for any $j\in J$ there exists $\lambda\in \Lambda^-$ such that $b(\partial N_j)\subset X_\lambda$.
Indeed, let $\lambda_1, \dots, \lambda_r\in \Lambda^-$ be the minimal elements of $\Lambda^-$ with respect to the partial order introduced earlier. The complexes
$X_{\lambda_1}, \dots, X_{\lambda_r}$ are connected and pairwise disjoint and for any $\lambda\in \Lambda^-$ the complex $X_\lambda$ is a subcomplex
of one of the
sets $X_{\lambda_i}$, where $i=1, \dots, i$. From our definition (\ref{defsigma}) it follows that the image of the circle $b(\partial N_j)$ is contained in the union
$\cup_{i=1}^r X_{\lambda_i}$ but since $b(\partial N_j)$ is connected it must lie in one of the sets $X_{\lambda_i}$.

We may apply Lemma \ref{lnegs} to each of the circles $\partial N_j$. We obtain that each of the circles $\partial N_j$ admits a spanning discs of area
$\le K_\epsilon |\partial N_j|$, where $K_\epsilon= C'^{-1}_\epsilon$ is the inverse of the constant given by Lemma \ref{lnegs}.
Using the minimality of the disc $D^2$ we obtain that the circles $\partial N$ bound in $D^2$ several discs with
the total area
$A \le K_\epsilon\cdot |\partial_-\Sigma|.$


For $\lambda\in \Lambda^+$ one has $L(X_\lambda)\ge 1$ and $\chi(X_\lambda)\le 1$ (since $b_2(X_\lambda)=0$); in particular, $e(X_\lambda)\geq f(X_\lambda)$. Hence we have
$$4L(X_\lambda) \ge 3\chi(X_\lambda) + L(X_\lambda) \ge 3\epsilon e(X_\lambda)\ge 3\epsilon f(X_\lambda) $$
where on the second last inequality we used the inequality $\nu(X_\lambda) \ge 1/3+\epsilon$. Summing up we get
$$f(\Sigma)\le  \sum_{\lambda\in \Lambda^+} f(X_{\lambda}) \le \frac{4}{3\epsilon}\sum_{\lambda\in \Lambda^+}L(X_\lambda) \le
 \frac{4}{3\epsilon}
 |\partial D^2|.$$
The rightmost inequality  is given by Lemma \ref{lpartial}.

Next we observe, that
\begin{eqnarray}
|\partial_-\Sigma| \le 2f(\Sigma) +|\partial_+\Sigma|.
\end{eqnarray}
Therefore, we obtain
\begin{eqnarray*}
f(D^2) &\le& f(\Sigma)+A \, \le \, \frac{4}{3\epsilon} |\gamma| + K_\epsilon\cdot 2\cdot f(\Sigma) + K_\epsilon |\gamma| \\
&\le &
\left(\frac{4}{3\epsilon}(1+2K_\epsilon) +K_\epsilon\right)\cdot |\gamma|,
\end{eqnarray*}
implying
\begin{eqnarray}
I(X) \ge \frac{3\epsilon}{4+8K_\epsilon+3\epsilon K_\epsilon}.
\end{eqnarray}
This completes the proof of Theorem \ref{uniform}.
\end{proof}


\bibliographystyle{amsalpha}

\vskip 2cm
A. Costa and M. Farber: 

School of Mathematical Sciences, Queen Mary University of London, London E1 4NS
\vskip 0.7cm

D. Horak: Theoretical Systems Biology, Impreial College, London, SW7 2AZ

\end{document}